\documentclass[final]{siamltex}
\usepackage{amsmath,braket,amsfonts}
\usepackage[caption=false]{subfig}
\usepackage{graphicx,epstopdf}

\usepackage{amssymb}
\usepackage{mathrsfs}

\makeatletter
\newcommand{\proofname}{Proof} 

\renewenvironment{proof}
  {\begin{trivlist}\item[]{\em \proofname.}}  
  {\hspace*{\fill}$\Box$\end{trivlist}}
\makeatother

\newtheorem{remark}{Remark}[section]

\newcommand{\stl}
{\mathrel{\raise2pt\hbox{${\mathop<\limits_{\raise1pt\hbox{\mbox{$\sim$}}}}$}}}

\newcommand{\stlm}
{\mathrel{\raise2pt\hbox{${\mathop<\limits_{\raise1pt\hbox{\mbox{$\sim$}}}}_{\raise4.5pt\hbox{\begin{footnotesize}$\mu$\end{footnotesize}}}$}}}

\newcommand{\stg}
{\mathrel{\raise2pt\hbox{${\mathop>\limits_{\raise1pt\hbox{\mbox{$\sim$}}}}$}}}

\newcommand{\ste}
{\mathrel{\raise2pt\hbox{${\mathop=\limits_{\raise1pt\hbox{\mbox{$\sim$}}}}$}}}

\def \ee{\begin{equation}}
\def \e{\end{equation}}
\def\beq{\begin{eqnarray}}
\def\eq{\end{eqnarray}}
\def\beqx{\begin{eqnarray*}}
\def\eqx{\end{eqnarray*}}
\def\l{\label}  
\renewcommand{\theequation}{\arabic{section}.\arabic{equation}}

\def\omega{\alpha}

\def\0{{\bf 0}}

\def\12{{1\over 2}}
\def\sl{\sup\limits}

\def\ga{\gamma}

\def\chi{{\cal X}}

\begin{document}

\title{New error estimates of the weighted $L^2$ projections}

\author{Qiya Hu and Yuhan Luo}

 \thanks{1. ICMSEC and State Key Laboratory of Mathematical Sciences, Academy of Mathematics and Systems Science, Chinese Academy of Sciences, Beijing
 100190, China; 2. School of Mathematical Sciences, University of Chinese Academy of Sciences, Beijing 100049,
 China (hqy@lsec.cc.ac.cn, luoyuhan@lsec.cc.ac.cn). The work of the authors was supported by the National Natural Science Foundation
of China grant G12571447.}


\maketitle
\begin{abstract}
It is known that the weighted $L^2$ projection operator exhibits approximation properties different from those of the classical $L^2$ projection, in the sense that
the $L^2$ error of the weighted $L^2$ projection of an $H^1$ function generally cannot be bounded by the $H^1$ semi-norm of the function. In this paper, we establish sharper $L^2$
error estimates for the weighted $L^2$ projection of an $H^1$ function under general weight distributions. These new estimates show that the $L^2$ errors of the weighted $L^2$
projection can be controlled by the $H^1$ semi-norm of the function, except when the weight distribution is highly irregular, such as those resembling a ``checkerboard" pattern.
These results can be applied to more refined analyses of domain decomposition methods and multigrid methods for certain partial differential equations with large jump coefficients.
\end{abstract}

\begin{keywords} jump coefficients, weighed $L^2$ projection, thorny vertex, thorny edge, $L^2$ error estimates.
~
\end{keywords}

\begin{AMS}
65N30, 65N55
~
\end{AMS}

\pagestyle{myheadings}
\thispagestyle{plain}
\markboth{}{}


\section{Introduction}\l{sec:introduction}
\setcounter{equation}{0}
Domain decomposition methods and multigrid methods are efficient numerical techniques for solving large-scale systems arising from the discretization of partial differential equations (PDEs). These methods have been widely studied for various PDE models, as documented in the literature (see, e.g., \cite{hipt1998}, \cite{HZ2}, \cite{toselli2004domain}, \cite{Xu-zhu2008}, and references therein).

A fundamental tool in the convergence analysis of these methods is the $L^2$ projection operator from a fine finite element space (or the $H^1$ space) into a coarser finite element space (see \cite{xu1992}). When the coefficients of the underlying PDE exhibit large jumps, it becomes necessary to employ a weighted $L^2$ projection operator, where the weights are defined by these coefficients. This approach is essential for determining whether the convergence rate depends on the magnitude of the coefficient jumps, as illustrated in works such as \cite{Hu2010} and \cite{Xu-zhu2008}.

Consider a bounded Lipschitz domain in three dimensions and a given positive piecewise constant function. The domain is decomposed into subdomains such that the function is constant on each subdomain; these constants are referred to as weights. For applications in domain decomposition methods, let $d$ denote the mesh size that defines the image finite element space of both the classical $L^2$ projection operator and the weighted $L^2$ projection operator, where the latter is defined with respect to weighted inner products.
It is well known that the $L^2$ error of the classical $L^2$ projection of an $H^1$ function can be bounded by the $H^1$ semi-norm of the function, multiplied by a convergence factor $d$. However, this result does not generally hold for the weighted $L^2$ projection. The approximation properties of the weighted $L^2$ projection have been studied in \cite{bramble1991some} and \cite{xu1991counterexamples}. Their results show that the $L^2$ error of the weighted $L^2$ projection of an $H^1$ function is typically controlled by the weighted $H^1$ full-norm rather than the weighted $H^1$ semi-norm of the function, with the convergence factor $d$ replaced by $d |\log d|^{1/2}$.
If the weighted $L^2$ projection operator acts on a finite element space with a finer mesh size $h$, the $L^2$ error can be bounded by the weighted $H^1$ semi-norm, but the convergence factor becomes $d(d/h)^{1/2}$.
When these error estimates for the weighted $L^2$ projection are applied to the convergence analysis of domain decomposition methods with simple coarse spaces and multigrid methods for Laplace-type equations with large jump coefficients, the resulting convergence rates characterized by the condition number of the preconditioned system—are often severely degraded by the coefficient jumps. This outcome contradicts the numerically observed convergence rates in many practical situations. This discrepancy can be explained by the reduced condition number demonstrated in \cite{Hu2010} and \cite{Xu-zhu2008}. We conjecture that the existing error estimates for the weighted $L^2$ projection may not be sharp.

In this paper, motivated by the first author's recent work \cite{Hu2023}, we undertake a more detailed investigation of the weighted $L^2$ projection. The relationship among the weights, which is crucial for convergence, is referred to as the distribution of the weights. We find that the $L^2$ error estimate of the weighted $L^2$ projection depends heavily on this distribution. For a general distribution of weights, we establish new $L^2$ error estimates for the weighted $L^2$ projection. These estimates show that the $L^2$ error can be bounded by a weighted combination of local $H^1$ full-norms or local $H^1$ semi-norms of the given $H^1$ function (with a convergence factor of $d|\log d|^{1/2}$). Notably, the local $H^1$ full-norms are required only for “bad” subdomains corresponding to a thorny weight distribution.

In particular, if the weight distribution satisfies the quasi-monotonicity condition introduced in \cite{Petzoldt2002}, then the $L^2$ error of the weighted $L^2$ projection can be controlled by a weighted $H^1$ semi-norm of the function (with the same convergence factor $d|\log d|^{1/2}$). We also consider the special case where the space on which the weighted $L^2$ projection acts is a finite element space with a refining mesh size $h$; for this setting, we obtain sharper results. Moreover, unlike in previous studies, we do not require the subdomains to be standard polyhedra--they may be general Lipschitz polyhedra associated with the mesh size $h$. In a subsequent paper, we will apply these new estimates to design an economical coarse solver and to establish a sharp convergence analysis of domain decomposition methods for H(\text{curl})-elliptic problems with large jump coefficients.

The paper is organized as follows. Section 2 introduces the domain decomposition based on the coefficient distribution, defines the corresponding weighted $L^2$ projection, and provides the definitions of a Lipschitz polyhedron,
a Lipschitz edge, and a Lipschitz face. In Section 3, we prove an edge lemma and a face lemma for a Lipschitz polyhedron. Section 4 introduces the new concept of thorny edges and presents four new error estimates for the
weighted $L^2$ projection. The proofs of the main results are given in Section 5.

\section{Preliminaries}
\label{sec:Preliminaries}

\subsection{The weighted \( L^2 \) projection}
Let \( \Omega \subset \mathbb{R}^3 \) be a bounded Lipschitz domain.
For a given bounded and positive function \( \alpha(x) \), which corresponds to the coefficient of a partial differential equation,
let \( \Omega \) be partitioned into \( N_0 \) disjoint Lipschitz subdomains:
\begin{equation}
\label{eq:domain decomposition}
\overline{\Omega} = \bigcup_{i=1}^{N_0} \overline{\Omega}_i
\end{equation}
such that \( \alpha(x) \) has small variation on each $\Omega_i$. For convenience, we assume that
\begin{equation}
\alpha(x) = \alpha_i, \quad \forall x \in \Omega_i,
\end{equation}
where \( \alpha_i \) is a positive constant. This decomposition is fully determined by the jump discontinuities of \( \alpha(x) \); hence \( N_0 \) often is a fixed constant and every \( \Omega_i \) has a diameter $O(H)$ ($H$ is a constant).

Let ${\mathcal T}_d$ be a quasi-uniform triangulation on $\Omega$ with the mesh size $d$. We also consider another quasi-uniform triangulation ${\mathcal T}_h$, which is a refinement of ${\mathcal T}_d$. Assume that each $\Omega_i$ is a union of some elements of ${\mathcal T}_d$ (and  ${\mathcal T}_h$).
We use $S_d(\Omega)$ and $S_h(\Omega)$, which are subspaces of $H^1_0(\Omega)$, to denote linear finite element spaces corresponding the triangulation ${\mathcal T}_d$ and ${\mathcal T}_h$, respectively. Then we have $S_d(\Omega)\subset S_h(\Omega)$. Let $Q_d^{\alpha}: L^2(\Omega)\rightarrow S_d(\Omega)$ be the weighted $L^2$ projection operator defined as
$$ (Q_d^{\alpha}u, v)_{L^2_\alpha(\Omega)}=(u, v)_{L^2_\alpha(\Omega)},\quad u\in L^2(\Omega),~\forall v\in S_d(\Omega). $$
where $(\cdot,\cdot)_{L^2_\alpha(\Omega)}$ denotes the weighted $L^2$ inner product:
$$ (u, v)_{L^2_\alpha(\Omega)}=\sum\limits_{i=1}^{N_0}\alpha_i\int_{\Omega_i}u\cdot v dx,\quad u, v\in L^2(\Omega). $$

Let \( \mathcal{N}_h \) denote the set of all mesh nodes in \( \mathcal{T}^h \), and \( \mathcal{E}_h \) and \( \mathcal{F}_h \) denote the sets of all edges and faces of elements in \( \mathcal{T}^h \).
Similarly, we can define \( \mathcal{N}_d \), \( \mathcal{E}_d \) and \( \mathcal{F}_d \).
\subsection{Irregular subdomains}
\label{sec:Irregular Subdomain Decompositions}

In applications we cannot require that each subdomain $\Omega_i$ is just a usual polyhedron (with a fixed number of faces).

$\bullet$ Lipschitz polyhedron. Assume that $\Omega_i$ is a union of elements associated with a triangulation ${\mathcal T}_h$ and is a Lipschitz domain with the Lipschitz constant independent of $h$.
In other words, $\Omega_i$ can be regarded as a perturbation of a usual polyhedron (see Fig. 2.1).

 Since each \( \Omega_i \) is only assumed to be Lipschitz, its faces and edges may have complex, non-planar geometries. We now give precise definitions of such a ``face" and an ``edge".

$\bullet$ Lipschitz edge. Let $E$ be a connected union set of element edges that lie on the boundary of some Lipschitz polyhedron $\Omega_i$. Assume that $E$ has the length $O(H)$. Let $l$ be a straight line determined by two endpoints of $E$.
We use $\delta_E$ to denote the maximal distance from all nodes on $E$ to $l$. If $\delta_E$ is smaller than $c_0h$ with a constant $c_0$, then $E$ is called a Lipschitz edge of $\Omega_i$. In other words, a Lipschitz edge can be viewed as a perturbation
of some edge of a usual polyhedral subdomain.

$\bullet$ Lipschitz face. Let $F$ be a connected union set of element faces that lie on the boundary of some Lipschitz polyhedron $\Omega_i$. Assume that $\partial F$ is a union of several Lipschitz edge of $\Omega_i$.
Let $\pi$ denote the plane determined by three mesh nodes on $F$, and let $\delta_F$ denote the maximal distance from all nodes on $F$ to $\pi$. If $\delta_F$ is smaller than $c_0h$ with a constant $c_0$, then $F$ is called a Lipschitz face of $\Omega_i$. In other words, a Lipschitz face can be viewed as a perturbation
of some face of a usual polyhedral subdomain.


As illustrated in Figure~\ref{fig:edge and face}, the defined Lipschitz face $F$ and Lipschitz edge $E$ can be regarded as  geometric perturbations of a plane polygon and a straight segment with size $O(H)$, respectively, and the perturbation magnitudes are independent of mesh refinement. That is, there exists a constant $C_0 > 0$, independent of the mesh size $h$, such that
\[
\delta_E, \delta_F \leq C_0
\]
for all ``face" $F$ and ``edge" $E$. Throughout this paper, $F$, $E$, and $V$ denote the faces, edges, and vertices of subdomains. Unless otherwise stated, all edges and faces considered below adhere to the above definitions.

\begin{figure}[!htbp]
    \centering
    \includegraphics[width=0.3\textwidth]{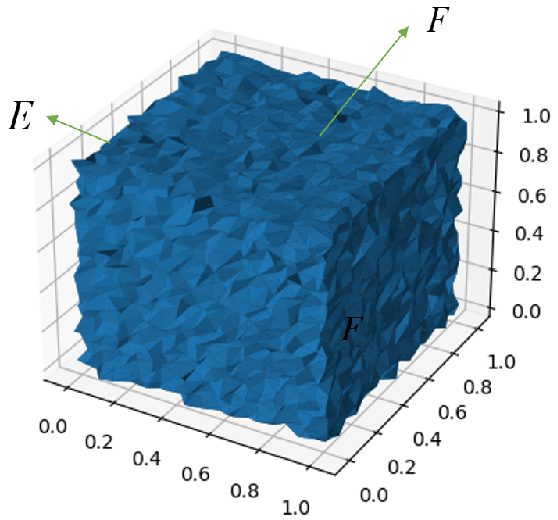}
    \caption{Illustration of faces and edges. Each face \(F\) and edge \(E\) is a connected union of element faces or edges, respectively, and can be viewed as a geometric perturbation of a plane polygon or a straight segment.}
    \label{fig:edge and face}
\end{figure}

\section{Edge Lemma and face Lemma on a Lipschitz polyhedron}
\label{sec:Edge and Face Lemmas on Lipschitz Domains}
The error estimates of the weighted \( L^2 \) projection $Q_d^{\alpha}$ rely on edge and face lemmas for Lipschitz edges and faces. However, existing results cannot be
directly applied to the “edges” and “faces” defined in the previous section. This section, inspired by \cite{bringmans2020discrete}, establishes and proves the edge and face lemmas on a Lipschitz polyhedron accordingly.
The implicit constants in the edge lemma and face lemma depend on the perturbation magnitude: the larger the deviation $C_0$ from a plane or line is, the larger the constants are.

In this section we use $G\subset \Omega$ to denote a general Lipschitz polyhedron $\Omega_i$ and consider a general mesh size $h$ (it can be also chosen as $d$ in subsection 2.1). Let $\tilde{S}_h(G)\subset H^1(G)$ be
the linear finite element space associated with a triangulation on $G$ with the mesh size $h$.
For any subset $K$ of $G$, we use \( \mathcal{N}_h(K) \) to denote the set of mesh nodes on $K$, and $\tilde{S}_h(K)\subset\tilde{S}_h(G)$ to denote the linear finite element space on $K$.
For a node $x_k$ on $G$, let $\varphi_k\in \tilde{S}_h(G)$ denote the nodal basis function corresponding to $x_k$. For a subset $K$ of $G$, define the restriction operator \( I_K^0: \tilde{S}_h(G) \to \tilde{S}_h(K) \) by
\[
I_K^0 u^h := \sum_{x_k \in \mathcal{N}_h(K)} u^h(\mathbf{x}_k)\, \varphi_k.
\]

\begin{lemma}[Edge Lemma]
\label{lem:edge lemma}
Let \( G \subset \mathbb{R}^3 \) be a Lipschitz polyhedron, equipped with a family of quasi-uniform tetrahedral meshes \( \{\mathcal{T}^h\} \), and assume that \( \operatorname{diam}(G) = H \). Let \( E \) be a Lipschitz edge
(as defined in Section~\ref{sec:Irregular Subdomain Decompositions}) on \( \partial G \). Then, for any \( u^h \in \tilde{S}_h(\partial G) \), the following estimate holds:
\[
\left\| I_E^0 u^h \right\|_{H^{1/2}(\partial G)} \leq C_1 \left\| u^h \right\|_{L^2(E)} \leq C_2 \Big(\log \frac{H}{h}\Big)^{1/2} \left\| u^h \right\|_{H^{1/2}(\partial G)},
\]
where \( C_1, C_2 > 0 \) are constants independent of \( h \) and \(H\), but depend on the mesh quasi-uniformity parameter.
\end{lemma}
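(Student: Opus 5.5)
The plan is to prove the two inequalities separately, reducing each to a local, element-by-element argument near $E$ combined with a one-dimensional logarithmic estimate along $E$. Throughout we may rescale so that $H=1$ (the $H^{1/2}$ seminorm and the weighted $L^2(E)$ norm then scale consistently). The Lipschitz-edge assumption $\delta_E\le c_0h$ lets us treat $E$ as lying within an $O(h)$-tube around a straight segment $l$, and $\partial G$ near $E$ as two Lipschitz faces meeting along it.

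\emph{First inequality.} The function $w=I_E^0u^h$ is supported in the strip $\omega_E$ of boundary elements touching $E$. This strip has width $O(h)$ and contains $O(h^{-1})$ triangles, and on each triangle $w$ is linear with nodal values taken from $E$. We bound $\|w\|_{H^{1/2}(\partial G)}$ by interpolation between $L^2$ and $H^1$:
\[
\|w\|_{H^{1/2}}^2\le C\|w\|_{L^2(\partial G)}\|w\|_{H^1(\partial G)}.
\]
Inverse and scaling estimates on the strip give $\|w\|_{L^2(\partial G)}^2\le Ch\sum_{x_k\in E}h|u^h(x_k)|^2\le Ch\|u^h\|_{L^2(E)}^2$ and $|w|_{H^1(\partial G)}^2\le C\sum_k h^0\,\cdot h^{-1}h|u^h(x_k)|^2\le Ch^{-1}\|u^h\|_{L^2(E)}^2$. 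The product is then $O(\|u^h\|_{L^2(E)}^2)$, which is the first inequality. The only geometric input is quasi-uniformity and the fact that $\partial G$ is Lipschitz, so that $H^{1/2}(\partial G)$ interpolation holds with constants uniform in $h$; a partition of unity over Lipschitz charts handles the latter.

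\emph{Second inequality.} This is the discrete trace-type estimate $\|u^h\|_{L^2(E)}^2\le C\log(1/h)\|u^h\|_{H^{1/2}(\partial G)}^2$. We would proceed in three steps.

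1. Extend $u^h$ discretely harmonically into $G$ (or use a stable discrete extension), obtaining $U^h$ with $\|U^h\|_{H^1(G)}\le C\|u^h\|_{H^{1/2}(\partial G)}$.

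2. Flatten locally. Using the Lipschitz charts, and since $E$ deviates from $l$ by at most $c_0h$, map a neighborhood of $E$ bi-Lipschitzly onto a neighborhood of a straight segment in $\mathbb R^3$. The images of the mesh elements remain shape-regular up to constants depending on $C_0$.

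3. Apply the classical argument from the three-dimensional edge lemma, as in the Toselli–Widlund or Bringmans et al. style. For each point $x\in E$, consider the 2D cross-sections perpendicular to $l$. The discrete Sobolev inequality in 2D gives $|v(x)|^2\le C\log(1/h)\|v\|_{H^1(\text{section})}^2$ for finite element functions. Integrating along $l$ and using Fubini then bounds $\|U^h\|_{L^2(E)}^2$ by $C\log(1/h)\|U^h\|_{H^1}^2$.

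The main obstacle is step 3 for a non-straight edge. The cross-sections are no longer exactly aligned with mesh edges, so the restriction of $U^h$ to a section is not a finite element function. We would handle this by working with elementwise polynomials: on each section we compare with a piecewise linear interpolant on an induced 2D shape-regular mesh of size $\sim h$, using the $O(h)$ perturbation bound. Alternatively, following \cite{bringmans2020discrete}, we would replace sections by a tube of radius $r$ and integrate the 2D logarithmic inequality $|v(x)|^2\lesssim \int_h^1 r^{-1}\ldots$ over radii from $h$ to $1$. Either way the constant depends on $C_0$ and the quasi-uniformity, as the statement allows.
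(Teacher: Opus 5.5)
Your first inequality is correct, and it is more self-contained than the paper, which only cites Lemma 4.9 of \cite{xu1998some}. Indeed, $w=I_E^0u^h$ is supported in an $O(h)$-strip, $\|w\|_{L^2(\partial G)}^2\lesssim h\|u^h\|_{L^2(E)}^2$, $\|w\|_{H^1(\partial G)}^2\lesssim h^{-1}\|u^h\|_{L^2(E)}^2$, and interpolation on the Lipschitz surface $\partial G$ finishes it.

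For the second inequality you take the classical cross-section route, which is genuinely different from the paper's. The paper never restricts anything to a plane. It bounds $h\sum_{\mathbf p\in\mathcal N_h(E)}u^h(\mathbf p)^2$ by the slice maxima $h\sum_i u^h(\mathbf p_i)^2$, using only that a slab of width $h$ contains $O(1)$ nodes of $E$. It then quotes Lemma~\ref{lem:3.2}, the centroid-slicing estimate of \cite{bringmans2020discrete}. There nodal values and elementwise constant gradients are selected slice by slice, so no element is ever cut and no straightening is needed. That lemma controls the maximum over each whole slice of the face, which is more than an edge trace, and the paper reuses it for the term $I_2$ in the face lemma.

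As written, however, your step 3 has a hole, and you have attributed it to the wrong cause. The trace of $U^h$ on a plane perpendicular to $l$ is not a finite element function on a shape-regular 2D mesh, even when $E$ is an exactly straight chain of mesh edges. The plane can cut off arbitrarily small caps of tetrahedra, so the trace is piecewise linear on a partition with no shape regularity. The inverse inequality $\|v\|_{L^\infty}\lesssim h^{-2/q}\|v\|_{L^q}$ used to prove the 2D discrete Sobolev inequality then fails. So that inequality cannot be quoted, straight edge or not.

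Your remedy (b) closes the gap if the scale $h$ is handled with the 3D mesh. Take $H=1$. For each $p\in\mathcal N_h(E)$, let $S_p\subset G$ be a sector with vertex $p$, opening bounded below and radius $r_0\sim 1$. It lies in the plane through $p$ of a fixed family of parallel planes transversal to $l$; its existence follows from the uniform cone condition. Let $\bar U_p(\rho)$ be the mean of $U^h$ over the arc of $S_p$ of radius $\rho$. Then
\begin{align*}
|U^h(p)|^2&\lesssim |U^h(p)-\bar U_p(h)|^2+|\bar U_p(h)-\bar U_p(r_0)|^2+|\bar U_p(r_0)|^2\\
&\lesssim h^{-1}\|\nabla U^h\|_{L^2(\mathcal P_p)}^2+\log\frac{r_0}{h}\,\|\nabla U^h\|_{L^2(S_p)}^2+\|U^h\|_{H^1(S_p)}^2 .
\end{align*}
The first bound is the 3D inverse estimate on the patch $\mathcal P_p$ of tetrahedra within distance $O(h)$ of $p$: $\nabla U^h$ is constant on each $K$, with $|\nabla U^h|^2\lesssim h^{-3}\|\nabla U^h\|_{L^2(K)}^2$ there. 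The second is Cauchy--Schwarz in polar coordinates, valid for any $H^1$ function on the sector.

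Now multiply by $h$ and sum over $p$. The patches overlap finitely. Moreover $h\sum_p\|\nabla U^h\|_{L^2(S_p)}^2\lesssim|U^h|_{H^1(G)}^2$, because each tetrahedron meets the plane of $S_p$ in area $\lesssim h^2$, and does so for only $O(1)$ nodes $p$. This holds since their positions along the common normal lie in an interval of length $\lesssim h$, and $E$ lies in a $c_0h$-tube around $l$. The last term is handled the same way, using $\|U^h\|_{L^\infty(K)}^2\lesssim h^{-3}\|U^h\|_{L^2(K)}^2$. Together with $\|u^h\|_{L^2(E)}^2\simeq h\sum_p u^h(p)^2$ and your step 1, this gives the $\log(H/h)$ bound.

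This counting, not the $O(h)$ deviation of $E$ from $l$, is what replaces the 2D discrete Sobolev inequality. Remedy (a) can be made to work too, but again only through this counting. It also makes your step 2 unnecessary, which is just as well. A general bi-Lipschitz map keeps neither the elements tetrahedral nor $U^h$ piecewise linear, so ``shape-regular images'' does not give you a finite element function. An ambient bi-Lipschitz straightening of a zigzag of mesh edges that also maps $G$ onto a wedge is itself not obvious.
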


\begin{lemma}[Face Lemma]
\label{lem:face lemma}
Let \( G \subset \mathbb{R}^d \) be a Lipschitz polyhedron, equipped with a family of quasi-uniform tetrahedral meshes \( \{ \mathcal{T}^h \} \), and assume that \( \operatorname{diam}(G) = H \). Let \( F \) be a Lipschitz face on \( \partial G \) when \( d = 3 \), or an edge when \( d = 2 \), both as defined in Section~\ref{sec:Irregular Subdomain Decompositions}. Then, for any \( u^h \in \tilde{S}_h(\partial G) \), we have
\[
\| I_F^0 u^h \|_{H^{1/2}(\partial G)} \leq C \Big(\log \frac{H}{h}\Big) \| u^h \|_{H^{1/2}(\partial G)},
\]
where \( C > 0 \) is a constant independent of \( h \) and \(H\), but may depend on the perturbation constant \( C_0 \) and mesh quasi-uniformity parameter.
\end{lemma}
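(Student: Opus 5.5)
We will follow the classical argument for the face lemma (Toselli--Widlund, Lemma 4.26) and adapt it to Lipschitz faces. The model is the planar case, where the estimate is standard, and the perturbation is handled by a bi-Lipschitz flattening map. Throughout we work on the reference scale $H=1$, obtained by dilating $G$ by $1/H$. The $H^{1/2}(\partial G)$ norm is understood with its natural scaling: the $L^2$ part is weighted by $1/H$, and the seminorm is invariant under dilation. After scaling, the mesh size is $h/H$, so it suffices to prove the bound with $\log(1/h)$.

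\textbf{Step 1: decomposition.} Write
\[
I_F^0 u^h = u^h|_F - \theta_F^h \quad\text{on } \overline F,
\]
where $\theta_F^h$ is the finite element function on $\overline F$ that agrees with $u^h$ at the nodes of $\partial F$ and vanishes at the interior nodes of $F$. Since $\partial F$ is a union of Lipschitz edges, $\theta_F^h$ is a sum of the edge-supported functions $I_E^0 u^h$, restricted to $\overline F$, over the edges $E\subset\partial F$ (vertex nodes are shared, which is harmless). We then bound $\|I_F^0u^h\|_{H^{1/2}(\partial G)}$ by controlling the $H^{1/2}_{00}(F)$ norm of $I_F^0 u^h$. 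Extension by zero from $F$ to $\partial G$ is bounded in this norm; for a Lipschitz face this follows by flattening, since the tubular neighborhood structure survives under a bi-Lipschitz map.

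\textbf{Step 2: the two ingredients.} First, by Lemma~\ref{lem:edge lemma}, each edge piece satisfies
\[
\|I_E^0 u^h\|_{H^{1/2}(\partial G)}\le C(\log\tfrac1h)^{1/2}\|u^h\|_{H^{1/2}(\partial G)},
\]
which is even better than required. Second, we must estimate
\[
\|I_F^0 u^h\|_{H^{1/2}_{00}(F)}^2 \simeq |I_F^0u^h|^2_{H^{1/2}(F)}+\int_F \frac{|I_F^0u^h|^2}{\mathrm{dist}(x,\partial F)}\,ds .
\]
For the seminorm part, we use the triangle inequality with $u^h$ and with $\theta_F^h$. The function $\theta_F^h$ is supported in a strip of width $h$ along $\partial F$ and has size $\max_{\partial F}|u^h|$, so its $H^{1/2}(F)$ seminorm is bounded by $C\|u^h\|_{L^\infty(F)}$. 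For the weighted term, we split $F$ into the layer $\{\mathrm{dist}<h\}$ and its complement. On the complement, $I_F^0u^h=u^h$, and we use $|u^h|\le\|u^h\|_{L^\infty}$ together with $\int_{h<\mathrm{dist}} \mathrm{dist}^{-1}\le C\log(1/h)$. Combining these as in the standard proof (subtracting the face average $\bar u_F$, which $I_F^0$ reproduces up to the boundary layer), we reduce everything to the discrete Sobolev inequality on the two-dimensional manifold $F$:
\[
\|u^h-\bar u_F\|_{L^\infty(F)}^2\le C\log(1/h)\,\|u^h\|^2_{H^{1/2}(\partial G)} .
\]
We will prove this by using the discrete Sobolev inequality in 3D on a finite element harmonic extension into $G$. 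Alternatively, we can flatten $F$ locally with a bi-Lipschitz chart and invoke the 2D planar version. The constant term $\bar u_F$ is handled by Poincaré on $\partial G$, or absorbed since the full norm appears on the right. This gives $\log(1/h)$ times the norm squared, hence the factor $\log(1/h)$ (not squared) after taking square roots and collecting terms, as in Lemma 4.26 of Toselli--Widlund.

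\textbf{Main obstacle.} The delicate point is transferring the planar tools to a Lipschitz face: the $H^{1/2}_{00}$ characterization via the distance weight, the boundedness of extension by zero, and the discrete Sobolev inequality. A Lipschitz face deviates from its plane $\pi$ by $\delta_F\le C_0$, and the mesh on $F$ is not a planar mesh. We will first try the orthogonal projection $P:F\to\pi$. The definition of a Lipschitz face and the Lipschitz character of $G$ make $P$ bi-Lipschitz, with constants depending on $C_0$ and the Lipschitz constant. Such a map preserves $H^{1/2}$ and $H^{1/2}_{00}$ norms, distances to the boundary, and $L^\infty$ norms up to constants. The pushed-forward mesh is quasi-uniform up to $C_0$-dependent constants, so piecewise-linear functions are piecewise-linear up to equivalence of local norms. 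This is where the dependence of $C$ on $C_0$ enters. If the projection fails to be injective because of fine-scale oscillation, we will instead build a Lipschitz graph parametrization of $\partial G$ near $F$ from the domain's Lipschitz charts and apply the same argument.
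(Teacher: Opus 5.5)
Your overall structure (the $H^{1/2}_{00}(F)$ characterization, split into a seminorm and a distance-weighted term) matches the paper's. The gap is the step where you reduce everything to the discrete Sobolev inequality $\|u^h-\bar u_F\|^2_{L^\infty(F)}\le C\log(1/h)\,\|u^h\|^2_{H^{1/2}(\partial G)}$ on the two-dimensional face.

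That inequality drives the classical proof for $d=2$, where $F$ is an edge of a polygon and the bound follows from the 2D discrete Sobolev inequality for the discrete harmonic extension. For a face in $\mathbb{R}^3$ it is false. Take $u^h$ to be the nodal basis function on $\partial G$ at an interior node of $F$, with $H=1$. Then $\|u^h-\bar u_F\|_{L^\infty(F)}\ge 1-O(h^2)$. On the other hand, $\|u^h\|^2_{L^2(\partial G)}\simeq h^2$ and $|u^h|^2_{H^1(\partial G)}\simeq 1$, so $\|u^h\|^2_{H^{1/2}(\partial G)}\simeq h$ and the best constant is of order $h^{-1}$.

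Neither of your two routes can produce a logarithm. The discrete Sobolev inequality in three dimensions carries a factor $H/h$, since a 3D hat function has $\|\phi\|^2_{H^1}\simeq h$. After flattening, the planar 2D inequality bounds $L^\infty(F)$ by the $H^1(F)$ norm, which the $H^{1/2}$ norm controls only up to a factor $h^{-1/2}$.

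Consequently your bound for the weighted term $\int_F |I^0_F u^h|^2/\mathrm{dist}(x,\partial F)\,ds$, which is the genuinely hard term, is unjustified. With the correct $L^\infty$ bound it would give $h^{-1}\log(1/h)$ instead of $\log^2(1/h)$.

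Two smaller points. Your bound for $\theta^h_F$ can be rescued without $L^\infty$: its $H^{1/2}(F)$ seminorm squared is $\lesssim\|u^h\|^2_{L^2(\partial F)}$, and the edge lemma bounds this by $\log(1/h)\|u^h\|^2_{H^{1/2}(\partial G)}$, as your Step 2 essentially indicates. Also, in three dimensions the $H^{1/2}(\partial G)$ seminorm scales like $H$ rather than being dilation invariant. This is harmless, because $H^{-1}\|\cdot\|^2_{L^2(\partial G)}$ scales the same way.

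The missing idea is that in three dimensions only a \emph{sliced} discrete Sobolev inequality is available: a maximum along one direction of the face, square-summed over the transverse direction. This is what the paper's centroid slicing provides.

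\begin{enumerate}
\item The elements touching $F$ are grouped into strips $\Lambda_i$ of width $h$ according to the $y$-coordinate of their centroids.
\item In each strip, $\mathbf p_i$ is the node where $|u^h|$ is maximal.
\item Lemma~\ref{lem:3.2} gives $h\sum_i u^h(\mathbf p_i)^2\lesssim\log(H/h)\|u^h\|^2_{H^{1/2}(\partial G)}$. This is the rigorous form of applying the 2D discrete Sobolev inequality on each planar slice of the discrete harmonic extension and integrating over the slice parameter. Your hat-function example is consistent with it: both sides are of order $h$ up to the logarithm.
\item The weight is then integrated strip by strip. For elements away from $\partial F$, each strip contributes at most $u^h(\mathbf p_i)^2\,h\log(H/h)$, so this part is $\lesssim\log^2(H/h)\|u^h\|^2_{H^{1/2}(\partial G)}$.
\item For elements touching $\partial F$, the paper uses that $I^0_Fu^h$ vanishes on $\partial F$, together with the strip-wise gradient maxima of Lemma~\ref{lem:face lemma3}.
\end{enumerate}

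Your outline needs to replace the global $L^\infty(F)$ bound by such a strip-wise argument. As written, it proves the lemma only for $d=2$.
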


\subsection{Proofs of the edge and face Lemmas}
This section adopts the centroid-slicing approach proposed in \cite{bringmans2020discrete} to prove the edge and face lemmas on Lipschitz domains. The main change is to replace the original arguments that rely on planar geometry by a framework based on discrete \( L^2 \) norms, thereby eliminating the dependence on polyhedral subdomain structures.

We begin by introducing the geometric definition of centroid slices and the corresponding inverse inequalities. Let \( F \subset \partial G \) be an arbitrary face, \( E \) an edge of \( F \), and let \( v^h \in \tilde{S}_h(F) \). The basic idea is to classify the elements touching the face $F$ by the positions of their barycenters. Without loss of generality, we use a rectangle to replace a Lipschitz face $F$ in the diagram Figure~\ref{fig:edge and face lemma}.  Following Lemmas 4.24 and 4.26 in \cite{bringmans2020discrete}, the set
\[
\mathcal{T}^h(F) := \{ K \in \mathcal{T}^h \mid K \text{ has at least one face on } F \}
\]
is partitioned into subsets \( \{\Lambda_i\}_{i=1}^N \) defined by
\[
\Lambda_i := \left\{ K \in \mathcal{T}^h(F) \mid (i-1)h < \Pi(c_K) \leq ih \right\},
\]
where \( c_K \) denotes the centroid of element \( K \), and \( \Pi \) is the orthogonal projection onto the \( y \)-axis.
The integer \( N \) is the minimal number such that all \( K \in \mathcal{T}^h(F) \) are classified.

\begin{figure}[!htbp]
    \centering
    \includegraphics[width=0.9\textwidth]{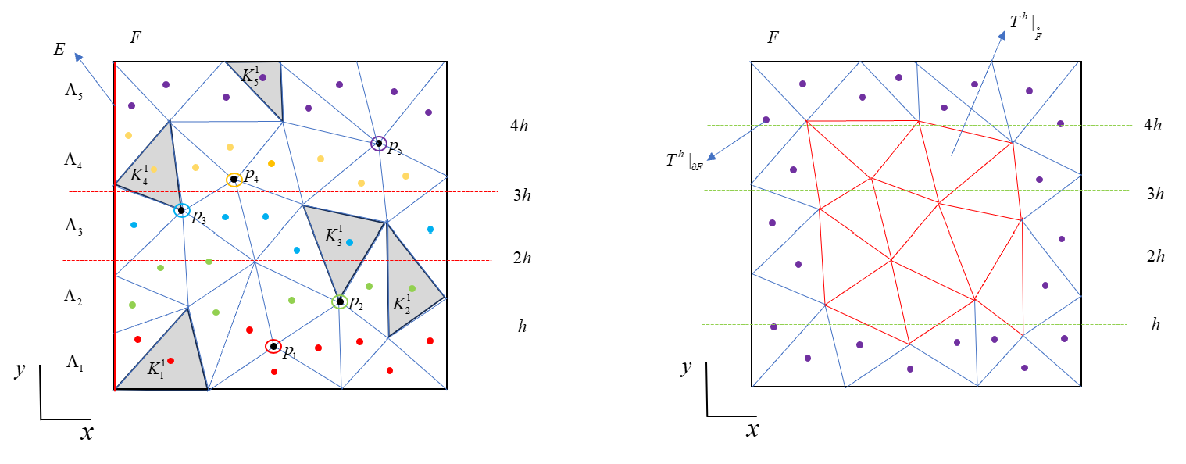}
    \caption{Illustration of centroid slices. Left: schematic of the selected nodes \( \{\textbf{p}_i\}_{i=1}^N \) and elements \( \mathcal{T}_{\partial_1 v}^h(F) = \{K^1_i\}_{i=1}^N \); Right: mesh subsets \( \mathcal{T}^h|_{\mathring{F}} \) and \( \mathcal{T}^h|_{\partial F} \).}
    \label{fig:edge and face lemma}
\end{figure}

Let \( \mathcal{N}_h(\Lambda_i) \) and \( \mathcal{N}_h(F) \) denote the sets of nodes in \( \Lambda_i \) and on \( F \), respectively. For each \( i = 1, \dots, N \), choose a node \( \mathbf{p}_i \in F \) such that
\begin{equation}
\label{eq:p_i}
v^h(\mathbf{p}_i)^2 = \max_{\mathbf{p} \in \mathcal{N}_h(\Lambda_i) \cap \mathcal{N}_h(F)} v^h(\mathbf{p})^2.
\end{equation}
Since \( v^h \in \tilde{S}_h(F) \) is piecewise linear, the maximum of \( |v^h| \) on any triangular face of \( F \) is attained at a node. By construction, for any node \( \mathbf{p} \in E \cap \mathcal{N}_h(\Lambda_i) \), we have \( v^h(\mathbf{p})^2 \leq v^h(\mathbf{p}_i)^2 \). Therefore, the discrete \( L^2 \) norm on edge \( E \) can be controlled by the nodal values at \( \{\mathbf{p}_i\} \), as will be shown in the proof of the edge lemma below.

Let \( \hat{v}^h \in \tilde{S}_h(G) \) be the generalized discrete harmonic extension of \( v^h \). For each \( i = 1, \dots, N \) and each direction \( j = 1,2,3 \), select an element \( K_i^j \in \Lambda_i \subset \mathcal{T}^h(F) \) such that
\begin{equation}
\label{eq:T_partial_i v^h(F)}
|\partial_j \hat{v}^h(c_{K_i^j})|^2 = \max_{K \in \Lambda_i} |\partial_j \hat{v}^h(c_K)|^2,
\end{equation}
and define \( \mathcal{T}_{\partial_j v}^h(F) := \{K_i^j\}_{i=1}^N \) for \( j = 1,2,3 \). Since \( \hat{v}^h \in \tilde{S}_h(G) \) is piecewise linear, each partial derivative \( \partial_j \hat{v}^h \) is constant on every element \( K \in \mathcal{T}^h(F) \). Hence, for each \( \Lambda_i \), there exists an element where \( |\partial_j \hat{v}^h| \) attains its maximum over \( \Lambda_i \), allowing us to define the sets \( \mathcal{T}_{\partial_j v}^h(F) \) for \( j = 1,2,3 \).

As illustrated in the left panel of Figure~\ref{fig:edge and face lemma}, the set \( \mathcal{T}^h(F) \) is partitioned into five subsets. In each \( \Lambda_i \), a node \( \mathbf{p}_i \in \mathcal{N}_h(F) \) and an element \( K_i^1 \in \Lambda_i \) are selected to satisfy \eqref{eq:p_i} and \eqref{eq:T_partial_i v^h(F)}, respectively. The selections of \( \{\mathbf{p}_i\}_{i=1}^N \) and \( \mathcal{T}_{\partial_j v}^h(F) \) are not unique and depend on the function \( v \), but this does not affect the subsequent analysis.

We next present inverse inequalities associated with the sets \( \{\mathbf{p}_i\}_{i=1}^N \) and \( \mathcal{T}_{\partial_i v}^h(F) \) for \( i = 1,2,3 \).

\begin{lemma}
\label{lem:3.2}
Let \( G \subset \mathbb{R}^3 \) be a Lipschitz domain with \( \mathrm{diam}(G) = H \), equipped with a family of quasi-uniform tetrahedral meshes \( \{\mathcal{T}^h\} \). Let \( F \subset \partial G \) be an arbitrary face, and let \( \{\mathbf{p}_i\}_{i=1}^N \) be the set of nodes defined in \eqref{eq:p_i}. Then, for any \( v^h \in \tilde{S}_h(\partial G) \), the following estimate holds:
\[
    h\sum_{i=1}^N v^h(\mathbf{p}_i)^2\leq C\big( \log \frac{H}{h} \big) \left\| v^h \right\|_{H^{1/2}(\partial G)}^2,
\]
where the constant \( C > 0 \) is independent of \( h \), \( H \), and \( v \).

\end{lemma}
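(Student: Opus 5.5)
We will prove the estimate by the standard route for edge lemmas: a discrete Sobolev inequality bounds the maximum nodal value on the face, while the $H^1$ seminorm of a suitable extension controls how much the value at each slice node differs from that maximum. Since $N\lesssim H/h$, the crude bound
\[
h\sum_{i=1}^N v^h(\mathbf{p}_i)^2\le hN\max_i v^h(\mathbf{p}_i)^2\lesssim H\,\|v^h\|_{L^\infty(F)}^2
\]
already yields the statement once the sup-norm is controlled. We will use the discrete Sobolev inequality in two dimensions, transported to the Lipschitz surface $\partial G$ by a bi-Lipschitz chart whose constant is independent of $h$; this is possible because $G$ is a Lipschitz polyhedron. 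That inequality gives
\[
\|v^h\|_{L^\infty(\partial G)}^2\le C\Big(1+\log\frac{H}{h}\Big)\Big(|v^h|_{H^{1/2}(\partial G)}^2+H^{-1}\|v^h\|_{L^2(\partial G)}^2\Big).
\]
To make this a single $H^{1/2}$ norm, we will use the scaled norm $\|v\|_{H^{1/2}(\partial G)}^2=|v|^2_{H^{1/2}}+H^{-1}\|v\|^2_{L^2}$ (the convention implicit in scale-invariant statements). Multiplying by $hN\lesssim H$ leaves an extra factor $H$, which must be absorbed. Matching dimensions, the $H^{1/2}$ seminorm on a 2D surface is scale invariant, while $h\sum_i v(\mathbf{p}_i)^2$ scales like length. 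So the intended normalization must be $\|\cdot\|_{H^{1/2}(\partial G)}$ defined on the reference scaling, i.e.\ after dilating $G$ to unit diameter. We will state this explicitly and work on the dilated domain with $H=1$, where $hN\lesssim 1$ and the crude bound suffices.

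To follow the centroid-slicing spirit and keep the argument robust, we can instead argue slice by slice. Let $\hat v^h$ be the discrete harmonic extension, so that $|\hat v^h|_{H^1(G)}\lesssim |v^h|_{H^{1/2}(\partial G)}$. For each slice $\Lambda_i$, compare $v^h(\mathbf{p}_i)$ with the average of $\hat v^h$ over $\Lambda_i$. A one-dimensional discrete Sobolev argument along the sequence of slices (index $i$, step $h$) then gives
\[
\max_i |\bar v_i|^2\lesssim \log\frac{1}{h}\,\Big(h\sum_i|\bar v_{i+1}-\bar v_i|^2/h^2+\cdots\Big).
\]
The slice differences are controlled by $\|\nabla\hat v^h\|$ on the strip of elements touching $F$, using inverse estimates. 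The differences $v^h(\mathbf{p}_i)-\bar v_i$ are bounded by $h|\nabla \hat v^h|$ on the slice; summed with weight $h$, these give $h^2\sum_i|\nabla\hat v^h|^2_{\Lambda_i}\lesssim \|\nabla\hat v^h\|^2_{L^2(G)}$ by quasi-uniformity. This is because each $\Lambda_i$ contains $O(1/h)$ elements of volume $h^3$, and the maximal gradient is dominated by the sum. The $L^2$ part is handled by a trace inequality.

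The main obstacle is the Lipschitz (non-planar) geometry. The slices $\Lambda_i$ must have a bounded overlap structure, with $O(h^{-1})$ elements each and neighbouring slices connected through elements sharing faces. This needs the perturbation bound $\delta_F\le C_0$ and the Lipschitz chart to guarantee that the projection $\Pi$ restricted to $F$ is bi-Lipschitz onto an interval. We would first verify this using the definition of a Lipschitz face, together with the discrete Sobolev inequality on a Lipschitz surface via pullback to a planar reference mesh, which has equivalent quasi-uniformity constants.
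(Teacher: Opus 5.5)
\paragraph{Verdict: genuine gap.} Your first route fails, and the failure has nothing to do with normalization. The inequality
\[
\|v^h\|_{L^\infty(\partial G)}^2\le C\Big(1+\log\frac{H}{h}\Big)\Big(|v^h|_{H^{1/2}(\partial G)}^2+H^{-1}\|v^h\|_{L^2(\partial G)}^2\Big)
\]
is false.
\begin{itemize}
\item The two-dimensional discrete Sobolev inequality bounds the maximum by the $H^1$ norm of a function on a two-dimensional set. Transported to $\partial G$, it controls $\|v^h\|_{L^\infty}^2$ by $(1+\log\frac{H}{h})$ times a scaled $\|v^h\|^2_{H^1(\partial G)}$, and passing from $H^1(\partial G)$ to $H^{1/2}(\partial G)$ costs an inverse factor $h^{-1}$. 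On a two-dimensional surface, $H^{1/2}$ is subcritical; the logarithmic $H^{1/2}$ version holds only on curves, i.e.\ for $G\subset\mathbb{R}^2$.
\item Concretely, take a single nodal basis function $v^h=\varphi_k$ at a node of $F$. Then $\|v^h\|_{L^\infty}=1$ but $\|v^h\|^2_{H^{1/2}(\partial G)}\simeq h$.
\item The same example shows the crude step is fatal in itself. The right-hand side of the lemma is $\simeq h\log(H/h)$, while $hN\max_i v^h(\mathbf{p}_i)^2\simeq H$. Replacing $\sum_i$ by $N\max_i$ discards exactly the localization the lemma encodes.
\item Your dimension count is also wrong. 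On a surface, $|v|^2_{H^{1/2}}$ scales like length, exactly as $h\sum_i v^h(\mathbf{p}_i)^2$ does. So the lemma is scale-consistent as stated, with $\|v\|^2_{H^{1/2}}=|v|^2_{H^{1/2}}+H^{-1}\|v\|^2_{L^2}$. The leftover factor $H$ you found is a symptom of the false $L^\infty$ bound, and setting $H=1$ does not remove it.
\end{itemize}

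\paragraph{The slice-by-slice fallback.} This misplaces where the logarithm comes from.
\begin{itemize}
\item Each $\Lambda_i$ is a strip of $O(H/h)$ elements running across all of $F$ perpendicular to the $y$-axis. So $v^h(\mathbf{p}_i)-\bar v_i$ is an oscillation over a length $O(H)$, not $O(h)$, and it is not bounded by $h|\nabla\hat v^h|$ on the slice.
\item Any one-dimensional estimate along the strip loses a power of $H/h$ against $\|v^h\|^2_{H^{1/2}(\partial G)}$. This holds whether you use Cauchy--Schwarz over the $O(H/h)$ elements or the $H^1(F)$-seminorm of $v^h$.
\item Conversely, the variation between slices needs no Sobolev inequality: $h\sum_i\bar v_i^2$ is an $L^2$-type quantity. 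A bound on $\max_i|\bar v_i|$ is not what the lemma asks for.
\end{itemize}

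\paragraph{The missing idea.} You need to bound the within-slice maximum through the three-dimensional extension on a full cross-section. Let $G_i$ be the slab of $G$ of thickness $O(h)$, orthogonal to the $y$-axis, that contains $\Lambda_i$. It reaches depth $O(H)$ into $G$, not just the elements touching $F$. Applying the two-dimensional discrete Sobolev inequality on the cross-sections of $G_i$, where $H^1$ is log-critical, should give
\[
\max_{\mathbf{p}\in\mathcal{N}_h(\Lambda_i)\cap\mathcal{N}_h(F)}v^h(\mathbf{p})^2\lesssim\Big(1+\log\frac{H}{h}\Big)h^{-1}\Big(|\hat v^h|^2_{H^1(G_i)}+H^{-2}\|\hat v^h\|^2_{L^2(G_i)}\Big).
\]
Multiplying by $h$, summing over the boundedly overlapping slabs, and using the stability of the discrete harmonic extension then yields the lemma. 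This is the classical cross-section proof of the edge lemma. Making such cross-sections rigorous on a tetrahedral mesh is precisely what the centroid slicing is for.

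\paragraph{What the paper does.} The paper does not redo this argument. It cites Lemma 5.8 of Bringmans et al.\ and asserts that the element-wise estimates there carry over to Lipschitz polyhedra. It likewise asserts that the Scott--Zhang-based stability of the generalized discrete harmonic extension carries over.
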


\begin{proof} The details can be found in the proof of Lemma 5.8 in  \cite{bringmans2020discrete}. The proof is based solely on element-wise estimates and summations, without relying on geometric assumptions specific to polyhedral subdomains, and thus extends naturally to Lipschitz domains. The conclusion follows from the properties of the generalized discrete harmonic extension, which carry over to Lipschitz domains via the Scott--Zhang interpolation operator (see Section~4 in \cite{scott1990finite}).
\end{proof}

\begin{lemma}
\label{lem:face lemma3}
Let \( G \subset \mathbb{R}^3 \) be a Lipschitz domain with \( \mathrm{diam}(G) = H \), equipped with a family of quasi-uniform tetrahedral meshes \( \{\mathcal{T}^h\} \). Let \( F \subset \partial G \) be an arbitrary face, and let the sets \( \mathcal{T}_{\partial_i v}^h(F) \) be defined as in \eqref{eq:T_partial_i v^h(F)}. For any \( v^h \in \tilde{S}_h(\partial G) \), let \( \hat{v}^h \in \tilde{S}_h(G) \) denote the generalized discrete harmonic extension of \( v^h \). Then, for \( i = 1,2,3 \), the following estimate holds:
\[
h^3\sum_{K\in\mathcal{T}_{\partial_i v}^h(F)}|\partial_i \hat{v}^h(c_K)|^2 \leq C \big(\log \frac{H}{h}\big) \|v^h\|^2_{H^{1/2}(\partial G)},
\]
where the constant \( C > 0 \) is independent of \( h \), \(H\), and \( v \).
\end{lemma}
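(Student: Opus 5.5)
The plan follows Lemma 5.9 of \cite{bringmans2020discrete}, in the same spirit as Lemma~\ref{lem:3.2}. Fix $i\in\{1,2,3\}$ and write $w:=\partial_i\hat v^h$, which is piecewise constant on $\mathcal{T}^h$.

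The left-hand side should be read as a discrete $L^2$ norm of the gradient along a one-dimensional string of elements. One element is chosen from each centroid slice $\Lambda_k$, and each element has volume $\simeq h^3$, so
\[
h^3\sum_{K\in\mathcal{T}_{\partial_i v}^h(F)}|w(c_K)|^2\simeq \sum_{k=1}^N\|w\|_{L^2(K_k^i)}^2 .
\]
This is exactly the structure a one-dimensional Sobolev inequality with a logarithmic constant controls. The aim is therefore to bound the quantity by $\log(H/h)\,\|\nabla \hat v^h\|^2_{L^2(G)}$. The stability of the generalized discrete harmonic extension, $\|\hat v^h\|_{H^1(G)}\le C\|v^h\|_{H^{1/2}(\partial G)}$, then finishes the proof. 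This stability comes from the Scott--Zhang interpolant of a continuous $H^1$-extension and holds on Lipschitz $G$ with constants depending only on the Lipschitz character and the mesh regularity.

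The key step is a discrete pointwise-in-slice estimate. Introduce the auxiliary function $W\in\tilde S_h(G)$ defined by averaging $w$ over the elements sharing each node, so that $|W-w|$ on each element is bounded by jumps of $w$ across neighbouring faces.

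\begin{enumerate}
\item Apply a standard inverse inequality. Since $\hat v^h$ is continuous, tangential jumps vanish, which gives
\[
\sum_K \|W-w\|^2_{L^2(K)}\lesssim \|\nabla\hat v^h\|^2_{L^2(G)}
\]
with no logarithm.
\item Handle the main term $\sum_k\|W\|^2_{L^2(K_k^i)}\lesssim h^3\sum_k\max_{\omega_k}|W|^2$, where $\omega_k$ is the patch around $K_k^i$. Use the discrete Sobolev inequality in a three-dimensional region near $F$ of diameter $H$: $\|W\|_{L^\infty}^2\lesssim h^{-1}\log(H/h)\|W\|^2_{H^1}$ on slabs. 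More precisely, integrate over the two-dimensional slices orthogonal to the $y$-direction. On each slice at height $kh$ apply the two-dimensional discrete Sobolev inequality $\|W\|_{L^\infty}^2\lesssim \log(H/h)\,\|W\|^2_{H^1}$ on that slice, then sum over $k$ with thickness $h$. This gives
\[
h^3\sum_k\max|W|^2\lesssim h\log(H/h)\sum_k h\,\|W\|^2_{H^1(\text{slice}_k)}\lesssim \log(H/h)\, h\,\|W\|^2_{H^1(G_h)},
\]
where $G_h$ is a layer of width $O(H)$.
\end{enumerate}

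The main obstacle is that $W$ is essentially a derivative of $\hat v^h$, so $\|W\|_{H^1}$ is not controlled by $\|\hat v^h\|_{H^1}$. Only the inverse estimate $h\|\nabla W\|^2\lesssim h^{-1}\|W\|^2$ is available, and that is too weak. I would first try to avoid this by working instead with slabs of thickness $h$ adjacent to $F$. On such a slab, $\|W\|^2_{L^2}$ over one layer and $h^2\|\nabla W\|^2$ are both bounded by $\|\nabla\hat v^h\|^2_{L^2(\text{layer})}$. The remaining factor $h^{-1}$ is absorbed because the two-dimensional slices used there have only $H/h$ elements in the direction along the edge, which yields the $\log(H/h)$ from the one-dimensional harmonic sum. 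If this bookkeeping fails, the fallback is the trace-plus-Poincaré route of \cite{bringmans2020discrete}: estimate $|w(c_K)|$ via differences $h^{-1}(\hat v^h(\mathbf{p})-\hat v^h(\mathbf{q}))$ of nodal values on $K$. This reduces the statement to Lemma~\ref{lem:3.2}-type sums of squared nodal values along slices, which are already bounded by $\log(H/h)\|v^h\|^2_{H^{1/2}(\partial G)}$.

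Throughout, only element-wise arguments and quasi-uniformity are used; no planarity of $F$ is used. This justifies the extension to Lipschitz faces, exactly as in Lemma~\ref{lem:3.2}.
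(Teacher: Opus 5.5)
Your proposal does not close. The main line (Step~2) is abandoned because of an obstacle that comes only from a bookkeeping slip. Your own slice-wise discrete Sobolev inequality gives $h^3\sum_k\max|W|^2\lesssim h^3\log(H/h)\sum_k\|W\|^2_{H^1(\mathrm{slice}_k)}\simeq h^2\log(H/h)\|W\|^2_{H^1(G_h)}$. You wrote a prefactor $h$ in front of $\|W\|^2_{H^1(G_h)}$ where $h^2$ belongs. With $h^2$, the inverse estimate $h^2|W|^2_{H^1}\lesssim\|W\|^2_{L^2}\lesssim|\hat v^h|^2_{H^1(G)}$ is exactly enough. This still needs a two-dimensional discrete Sobolev inequality on planar cross-sections of a tetrahedral mesh, which are not shape-regular triangulations.

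Your fallback has a real gap. Every $K\in\mathcal{T}^h(F)$ has one vertex in the interior of $G$. For a coordinate direction $i$, $\partial_i\hat v^h|_K$ generally depends on $\hat v^h$ at that vertex. Lemma~\ref{lem:3.2} only controls the boundary values $v^h(\mathbf{p}_k)$ at nodes of $F$. So the reduction to Lemma~\ref{lem:3.2} needs an additional bound on interior nodal values of $\hat v^h$, which you do not supply.

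The missing idea is a one-liner you had already set up. You wrote $h^3\sum_K|w(c_K)|^2\simeq\sum_k\|w\|^2_{L^2(K_k^i)}$, using that $w$ is constant on $K$ and $|K|\simeq h^3$. Now observe that the selected elements $K_k^i\in\Lambda_k$ are pairwise distinct, because the centroid slices $\Lambda_k$ are disjoint. Hence, by the stability of the generalized discrete harmonic extension that you already invoke,
\[
h^3\sum_{K\in\mathcal{T}^h_{\partial_i v}(F)}|\partial_i\hat v^h(c_K)|^2\lesssim\sum_{k=1}^N\|\partial_i\hat v^h\|^2_{L^2(K_k^i)}\le|\hat v^h|^2_{H^1(G)}\lesssim\|v^h\|^2_{H^{1/2}(\partial G)}.
\]
This argument is purely element-level and uses no geometry of $F$. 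It does not even need the logarithm, which is then harmless as long as $\log(H/h)$ is bounded below, e.g.\ $H\ge 2h$.

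The paper gives no argument of its own: it defers to Corollary~4.32 and the element-level Lemma~4.30 of \cite{bringmans2020discrete}. At the $h^3$ scaling of the statement, the display above is all that is required. The one-dimensional logarithmic mechanism you were looking for is what Lemma~\ref{lem:3.2} needs for nodal values weighted by $h$, not what this gradient estimate needs.
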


\begin{proof}
The details can be found in the proof of Corollary 4.32 in \cite{bringmans2020discrete}. The argument follows the same strategy as in Lemma~\ref{lem:3.2}, with the difference that it relies on Lemma 4.30 in \cite{bringmans2020discrete}, which is established at the element level and does not depend on any geometric assumptions specific to polyhedral subdomains.
\end{proof}

Based on the above lemma, we now present a proof of Lemma~\ref{lem:edge lemma}.
\renewcommand{\proofname}{Proof of the edge lemma}
\begin{proof}
For the first inequality, we refer to the proof of Lemma 4.9 in \cite{xu1998some}. For the second inequality, using the discrete \( L^2 \) norm, we have
\begin{equation}
\label{eq:edge lemma 001}
  \left\|u^h\right\|_{L^2(E)}^2 \stackrel{\sim}{=} h \sum_{\mathbf{p} \in \mathcal{N}_h(E)} u^h(\mathbf{p})^2 \leq h \sum_{i=1}^N \sum_{\mathbf{p} \in \mathcal{N}_h(E) \cap \mathcal{N}_h(\Lambda_i)} u^h(\mathbf{p}_i)^2,
\end{equation}
where \( \mathcal{N}_h(E) \) denotes the set of nodes on the edge \( E \), and \( \{\mathbf{p}_i\}_{i=1}^N \) is defined as in \eqref{eq:p_i}. By the quasi-uniformity of the mesh, we obtain
\[
\sum_{\mathbf{p} \in \mathcal{N}_h(E) \cap \mathcal{N}_h(\Lambda_i)} 1 \lesssim \frac{h}{\min\limits_{K \in \mathcal{T}^h} \rho_K} \lesssim c_0 c_1.
\]
Hence, by Lemma~\ref{lem:3.2}, it follows that
\[
\left\|u^h\right\|_{L^2(E)}^2 \lesssim h \sum_{i=1}^N u^h(\mathbf{p}_i)^2 \lesssim \big( \log \frac{H}{h} \big) \left\| u^h \right\|_{H^{1/2}(\partial G)}^2,
\]
where the hidden constant depends only on the quasi-uniformity of the mesh.
\end{proof}

We proceed to prove Lemma~\ref{lem:face lemma}, building upon Lemma~\ref{lem:edge lemma}.
\renewcommand{\proofname}{Proof of the face lemma}

\begin{proof}
We first apply the norm equivalence in the Lions--Magenes space to expand the left-hand side of the inequality:
\begin{equation}
\label{eq:face lemma1}
\|I^0_F u^h\|_{H^{1/2}(\partial G)}^2 \stackrel{\sim}{=} \|I^0_F u^h\|_{H^{1/2}_{00}(F)}^2 \stackrel{\sim}{=} |I^0_F u^h|_{H^{1/2}(F)}^2 + \int_F \frac{(I^0_F u^h)^2(\mathbf{x})}{d(\mathbf{x}, \partial F)} \, ds(\mathbf{x}).
\end{equation}
The first term on the right-hand side can be estimated as in Section 4.3 of \cite{xu1998some}. It suffices to estimate the second term. 
Let \( \mathcal{T}^h|_{F} \) denote the mesh on \( F \) induced by \( \mathcal{T}^h \), and $\hat{F}$ denote the union of all the elements in the interior of $F$. Define
\[
\mathcal{T}^h|_{\partial F} := \{T \in \mathcal{T}^h|_{F} \mid T \cap \partial F \neq \emptyset\}, \quad
\mathcal{T}^h|_{\mathring{F}} := \{T \in \mathcal{T}^h|_{F} \mid T \cap \partial F = \emptyset\}.
\]
As shown in Figure~\ref{fig:edge and face lemma}, we have \( \mathcal{T}^h|_{F} = \mathcal{T}^h|_{\mathring{F}} \cup \mathcal{T}^h|_{\partial F} \).

For an element $T$ on $F$, we use \( D_T \) to denote the projection of \( T \) onto the \( xoy \)-plane (projections onto the \( yoz \) or \( xoz \)-plane may also be used). According to the definition of a Lipschitz  face in Section~\ref{sec:Irregular Subdomain Decompositions}, we obtain
\begin{equation}
\label{eq:3.21}
\begin{aligned}
&\int_F \frac{(I^0_F u^h)^2(\mathbf{x})}{d(\mathbf{x}, \partial F)} \, ds(\mathbf{x})
\leq C(\delta_F) \sum_{T \in \mathcal{T}^h|_{F}} \int_{D_T} \frac{(I^0_F u^h)^2(\mathbf{x})}{d(\mathbf{x}, \partial F)} \, dx \, dy \\
&\lesssim \sum_{T \in \mathcal{T}^h|_{\partial F}} \int_{D_T} \frac{(I^0_F u^h)^2(\mathbf{x})}{d(\mathbf{x}, \partial F)} \, dx \, dy + \sum_{T \in \mathcal{T}^h|_{\mathring{F}}} \int_{D_T} \frac{(I^0_F u^h)^2(\mathbf{x})}{d(\mathbf{x}, \partial F)} \, dx \, dy =: I_1 + I_2.
\end{aligned}
\end{equation}
Here \( C(\delta_F) \) is a constant depending on the flatness parameter \( \delta_F \), but independent of \( h \) and \(H\). When \( \delta_F \) increases, \( C(\delta_F) \) also increase. Since \( \delta_F \) is bounded,
\( C(\delta_F) \) is a constant.

For simplicity, let \( w(x,y,z) := I^0_F u^h(x,y,z) \). We begin with estimating \( I_1 \). Fix \( T \in \mathcal{T}^h|_{\partial F} \), and let \( K_T \in \mathcal{T}^h \) be the tetrahedral element having \( T \) as one of its faces. For a fixed \( \mathbf{x} \in T \), there exists a point \( \mathbf{x}_0 = (x_0, y_0, z_0) \in \partial F \) such that \( d(\mathbf{x}, \partial F) = d(\mathbf{x}, \mathbf{x}_0) \). Since \( w(\mathbf{x}_0) = 0 \) (by the definition of \( I^0_F \)), we have
\begin{equation}
\label{eq:3.23}
\begin{aligned}
\frac{w(\mathbf{x})^2}{d(\mathbf{x}, \partial F)} &= \frac{\big( \int_{x_0}^x \partial_1 w(t, y) \, dt + \int_{y_0}^y \partial_2 w(x_0, t) \, dt \big)^2}{\sqrt{(x - x_0)^2 + (y - y_0)^2 + (z - z_0)^2}} \\
&\lesssim h \max_{t \in [x_0, x]} |\partial_1 w(t, y)|^2 + h \max_{t \in [y_0, y]} |\partial_2 w(x_0, t)|^2.
\end{aligned}
\end{equation}
Substituting this into \( I_1 \) and applying Lemma~\ref{lem:face lemma3}, we obtain
\begin{equation*}
\begin{aligned}
 I_1 = \sum_{i=1}^N \sum_{K_T \in \Lambda_i} \int_{D_T} \frac{w(\mathbf{x})^2}{d(\mathbf{x}, \partial F)} \, dx \, dy
&\lesssim h^3 \sum_{i=1}^N \sum_{j=1}^3 \big( \partial_j \hat{u}^h(c_{K^j_i}) \big)^2
\\&\lesssim \big( \log \frac{H}{h} \big) \|u^h\|_{H^{1/2}(\partial G)}^2,
\end{aligned}
\end{equation*}
where the sets \( \{K^j_i\}_{i=1}^N \) for \( j = 1,2,3 \) are defined in \eqref{eq:T_partial_i v^h(F)}.

We next estimate \( I_2 \). For each \( \Lambda_i \), classify the elements \( T \in \mathcal{T}^h|_{\mathring{F}} \) with \( K_T \in \Lambda_i \) into layers as follows:
\begin{gather*}
\Lambda_{i,N_i} := \left\{ T \in \mathcal{T}^h|_{\mathring{F}}\;\middle|\; d_x(\mathbf{x}, \partial F) > \rho N_i h,\ \forall \mathbf{x} \in T \right\}, \\
\Lambda_{i,j} := \left\{ T \in \mathcal{T}^h|_{\mathring{F}}\;\middle|\; d_x(\mathbf{x}, \partial F) > \rho j h,\ \forall \mathbf{x} \in T \right\} \setminus \Lambda_{i,j+1}, \, j = N_i - 1, \dots, 1,
\end{gather*}
where \( \rho = 1/(c_0 c_1) \), and \( d_x(\mathbf{x}, \partial F) \) denotes the projection distance from \( \mathbf{x} \) to \( \partial F \) along the \( x \)-axis. The integer \( N_i \) is the largest \( j \) such that \( \rho j h < H/2 \), and hence \( N_i \lesssim H/h \). Then,
\begin{equation}
\label{eq:3.26}
\begin{aligned}
I_2 &\lesssim \sum_{i=1}^N \sum_{j=1}^{N_i} \sum_{T \in \Lambda_{i,j}} \int_{D_T} \frac{u^h(\mathbf{p}_i)^2}{d(\mathbf{x}, \partial F)} \, dx \, dy \\
&\lesssim \sum_{i=1}^N u^h(\mathbf{p}_i)^2 \sum_{j=1}^{N_i} \sum_{T \in \Lambda_{i,j}} \int_{D_T} \frac{1}{d(\mathbf{x}, \partial F)} \, dx \, dy,
\end{aligned}
\end{equation}
where \( \{\mathbf{p}_i\}_{i=1}^N \) are defined in \eqref{eq:p_i}. Using the standard bound for harmonic series, we have
\[
\sum_{j=1}^{N_i} \sum_{T \in \Lambda_{i,j}} \int_{D_T} \frac{1}{d(\mathbf{x}, \partial F)} \, dx \, dy
\lesssim \sum_{j=1}^{N_i} h^2 \cdot \frac{1}{\rho j h} \lesssim h \log N_i \lesssim h \log \big( \frac{H}{h} \big).
\]
Substituting into \eqref{eq:3.26} and applying Lemma~\ref{lem:3.2}, we obtain the estimate for \( I_2 \):
\begin{equation}
\label{eq:3.27}
I_2 \lesssim h \log \big( \frac{H}{h} \big) \sum_{i=1}^N u^h(\mathbf{p}_i)^2
\lesssim \big( \log \frac{H}{h} \big)^2 \|u^h\|_{H^{1/2}(\partial G)}^2,
\end{equation}
where the hidden constant depends only on \( \delta_F \) and the quasi-uniformity of the mesh. For the two dimensional case, with \( F \) denoting a Lipschitz edge, we can use the same argument.
\end{proof}

\section{Error estimates of the weighted \( L^2 \) projection associated with an irregular subdomain decomposition}
\label{sec:Error Estimates for Weighted L2 Projections}
In this section we present new error estimates of the weighted \( L^2 \) projection $Q_d^{\alpha}$ associated with an irregular subdomain decomposition $\overline{\Omega} = \cup_{i=1}^{N_0} \overline{\Omega}_i$ (see Section 2).
In the rest of this paper, two triangulations ${\mathcal T}_d$ and ${\mathcal T}_h$ will be involved, where ${\mathcal T}_h$ is a refinement of ${\mathcal T}_d$. Notice that each subdomain $\Omega_i$ is a union of some elements ${\mathcal T}_d$, then $\Omega_i$ is also a union of some elements ${\mathcal T}_h$.

As we will see, under the quasi-monotonicity condition introduced in \cite{Petzoldt2002}, the $L^2$ errors of such projections
can be controlled by weighted \( H^1 \) semi-norm of the considered function. However, if {\it thorny} vertices (see \cite{Hu2023}) or {\it thorny} edges appear, the errors should be controlled by weighted \( H^1 \) full-norm instead of semi-norm. In particular, new estimates are also established for functions in the refined finite element space \( S_{h}(\Omega) \).

\subsection{Some notions}

Let \( V \) and \( E \) denote a vertex and an edge of some subdomain $\Omega_i$, respectively. Let \( \mathscr{S}_V \) be the set of subdomains having \( V \) as a vertex, and \( \mathscr{S}_E \) be the set of subdomains having \( E \) as an edge. We first introduce the notions of thorny vertex, thorny edge and the quasi-monotonicity condition. Following Section 3.1 of \cite{Hu2023}, define the upper intersection of a subdomain \( \Omega_k \) as
\[
\mathcal{S}_k := \bigcup_{\alpha_l \geq \alpha_k,\, l \neq k} \big( \partial \Omega_k \cap \partial \Omega_l \big) \cup \big( \partial \Omega_k \cap \partial \Omega \big).
\]

The definition of a {\it thorny vertex} was given by Definition 3.2 in \cite{Hu2023}. Here we define analogously a {\it thorny edge}. If there exists \( \Omega_k \in \mathscr{S}_E \) such that \( E \) is an isolated edge in \( \mathcal{S}_k \), i.e., \( E \) is not contained in any face of \( \mathcal{S}_k \), then \( E \) is called a {\it thorny edge}. Then we easily know the definition of a {\it thorny vertex}.

\begin{remark}
The number and location of thorny edges and thorny vertices are determined by the distribution of the discontinuous coefficient \( \alpha(\mathbf{x}) \). We can naturally assume that the number and of thorny edges and thorny vertices is a finite number (independent of $d$ and $h$).
\end{remark}

We  define two subdomain sets associated with a thorny edge \( E \).  If \( \Omega_r \in \mathscr{S}_E^* \), then \( E \) is an isolated edge in \( \mathcal{S}_r \); define \( \mathscr{S}_E^c := \mathscr{S}_E \setminus \mathscr{S}_E^* \). Similarly, we define \( \mathscr{S}_V^* \) and \( \mathscr{S}_V^c \) for a thorny vertex \( V \). The properties of \( \mathscr{S}_E^* \) and \( \mathscr{S}_E^c \) are given below, and analogous results hold for \( \mathscr{S}_V^* \) and \( \mathscr{S}_V^c \) (see Proposition 3.1 in \cite{Hu2023}).

{\bf Proposition 4.1}
Let \( E \) be a thorny edge and \( \Omega_r \in \mathscr{S}_E^* \). If \( \Omega_l \in \mathscr{S}_E \) shares a common face with \( \Omega_r \) that contains \( E \), then \( \Omega_l \in \mathscr{S}_E^c \) and \( \alpha_l < \alpha_r \).

Following Section 4.1 of \cite{Petzoldt2002}, we present the definition of the quasi-monotonicity condition. It requires that the coefficient distribution \( \{\alpha_k\}_{k=1}^{N_0} \) produces no thorny vertex or thorny edge; that is, for each subdomain \( \Omega_k \), the upper intersection \( \mathcal{S}_k \) is either empty or a union of faces of \( \Omega_k \).

Theorem 2.1 in \cite{xu1991counterexamples} provides a counterexample showing that a uniform \( H^1 \) semi-norm error bound for the weighted \( L^2 \) projection may fail in the presence of thorny vertices or edges. This work revisits the problem by establishing an \( H^1 \) semi-norm estimate under the quasi-monotonicity condition and quantifying the effect of thorny structures on the projection error in the general case.

\subsection{The case without thorny vertex}
In this subsection, we establish a new error estimate for the weighted \( L^2 \) projection under the quasi-monotonicity condition. When no thorny vertex appears but thorny edges present, we also provide a new error estimate of the weighted \( L^2 \) projection for functions in a refined finite element space \( S_{h}(\Omega) \).

Let $\|\cdot\|_{L^2_\alpha(\Omega)}$ be the weighed $L^2$ norm induced by $(\cdot,\cdot)_{L^2_\alpha(\Omega)}$. Similarly, let $|\cdot|_{H^1_\alpha(\Omega)}$ denote the weighed $H^1$ semi-norm.
\begin{theorem}
\label{thm: quasi-monotonicity}
Assume that the domain decomposition \eqref{eq:domain decomposition} satisfies the quasi-monotonicity condition.
Then, for any \( u \in H_0^1(\Omega) \), we have the uniform error estimate controlled by $H^1$ semi-norm:
\[
\| (I - Q_d^\alpha) u \|_{L^2_\alpha(\Omega)} \leq C d  | u |_{H^1_\alpha(\Omega)},
\]
where the constant \( C \) depends only on the distribution of the coefficients \( \{\alpha_i\}_{i=1}^{N_0} \), but is independent of the variations of the coefficients and the mesh size \( d \).
\end{theorem}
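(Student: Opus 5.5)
Since $Q_d^\alpha$ is the orthogonal projection onto $S_d(\Omega)$ in the inner product $(\cdot,\cdot)_{L^2_\alpha(\Omega)}$, it is the best approximation in $\|\cdot\|_{L^2_\alpha(\Omega)}$. Hence it suffices to construct some operator $\Pi_d^\alpha: H^1_0(\Omega)\to S_d(\Omega)$ satisfying
\[
\|u-\Pi_d^\alpha u\|_{L^2_\alpha(\Omega)}\le C d\,|u|_{H^1_\alpha(\Omega)} ,
\]
because then $\|(I-Q_d^\alpha)u\|_{L^2_\alpha(\Omega)}\le\|u-\Pi_d^\alpha u\|_{L^2_\alpha(\Omega)}$. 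I would take $\Pi_d^\alpha$ to be a weighted Cl\'ement (Scott--Zhang) quasi-interpolant in the spirit of \cite{Petzoldt2002}, defined node by node as follows.
\begin{itemize}
\item For a node $x\in\mathcal N_d$ lying on $\partial\Omega$, the coefficient is $0$.
\item For a node $x\in\mathcal N_d$ in the interior of $\Omega$, let $\Omega_{k(x)}$ be a subdomain containing $x$ in its closure with $\alpha_{k(x)}=\max\{\alpha_l:\ x\in\overline\Omega_l\}$. The coefficient is the average of $u$ over $\omega_x\cap\Omega_{k(x)}$, where $\omega_x$ is the patch of $\mathcal T_d$-elements around $x$.
\end{itemize}
Each $\Omega_i$ is a union of $\mathcal T_d$-elements, so $\omega_x\cap\Omega_{k(x)}$ contains at least one full element. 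Moreover, all the local pieces below are unions of a bounded number of shape-regular elements. Consequently Poincar\'e and trace constants on them are uniform in $d$, and the Lipschitz character of the $\Omega_i$ does not enter beyond this.

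The second step is the local estimate on one element $\tau\subset\Omega_k$. Using the partition of unity,
\[
(u-\Pi_d^\alpha u)|_\tau=\sum_{x\in\tau}\big(u-c_x\big)\varphi_x ,
\]
where $c_x$ denotes the nodal coefficient. It therefore suffices to bound $\alpha_k\|u-c_x\|^2_{L^2(\tau)}$ for each vertex $x$ of $\tau$, and here quasi-monotonicity is used. Since $\mathcal S_k$ near $x$ is a union of faces of $\Omega_k$ (no thorny vertex or edge), one can move from $\Omega_k$ to $\Omega_{k(x)}$ through a chain $\Omega_k=\Omega_{l_0},\Omega_{l_1},\dots,\Omega_{l_m}=\Omega_{k(x)}$. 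In this chain consecutive subdomains share a face containing $x$ and $\alpha_{l_0}\le\alpha_{l_1}\le\cdots\le\alpha_{l_m}$. For boundary nodes the chain instead ends at a face on $\partial\Omega$ containing $x$. I would derive this chain by induction: if $\alpha_k$ is not maximal at $x$, the upper intersection $\mathcal S_k$ contains a face through $x$, and crossing it strictly increases (or keeps) $\alpha$. The number of subdomains is finite, so the process terminates.

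Along the chain, the union $D_x=\bigcup_j(\omega_x\cap\Omega_{l_j})$ is a face-connected union of $O(1)$ elements. The Poincar\'e inequality on $D_x$ (average taken over the last piece), or the Friedrichs inequality using $u=0$ on a boundary face, gives
\[
\|u-c_x\|_{L^2(\tau)}^2\le C d^2\sum_j|u|^2_{H^1(\omega_x\cap\Omega_{l_j})}.
\]
Multiplying by $\alpha_k\le\alpha_{l_j}$ yields
\[
\alpha_k\|u-c_x\|_{L^2(\tau)}^2\le Cd^2\sum_j\alpha_{l_j}|u|^2_{H^1(\omega_x\cap\Omega_{l_j})}\le Cd^2|u|^2_{H^1_\alpha(\omega_x)} .
\]
Summing over $\tau$ and using the finite overlap of the patches gives the desired global bound, with $C$ depending only on the combinatorics of the distribution (chain lengths, number of subdomains), not on the sizes of the $\alpha_i$.

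I expect the main obstacle to be the chain argument: showing rigorously that the quasi-monotonicity condition, stated via $\mathcal S_k$ being a union of faces, yields a face-connected monotone path at every node, including nodes on subdomain edges and vertices and near $\partial\Omega$. A second point to check is that the Poincar\'e constant on the face-connected union $D_x$ of elements is uniform for Lipschitz polyhedra. The latter is a finite-dimensional shape-regularity argument on reference configurations, so I would handle it by a compactness/scaling argument over the finitely many local element configurations.
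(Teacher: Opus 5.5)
Your argument is correct in outline and takes a different route from the paper's, though the two share the same combinatorial core.

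\textbf{The paper's construction.} Both proofs reduce to best approximation in $\|\cdot\|_{L^2_\alpha(\Omega)}$ and then exhibit a good comparison function in $S_d(\Omega)$. The paper builds it at the subdomain level:
\begin{itemize}
\item it takes the local $L^2$ projections $u_{d,k}$ onto $S_d(\Omega_k)$;
\item it orders the subdomains into layers $\Sigma_1,\dots,\Sigma_m$, where touching subdomains lie in different layers and a lower index carries the larger coefficient;
\item it overwrites interface nodal values by those of the lowest-layer subdomain at the node, which is a subdomain of largest coefficient there, i.e.\ your $\Omega_{k(x)}$;
\item it bounds the nodal jumps by discrete $L^2$ norms on interface faces and the scaled trace bound $d\|u-u_{d,k}\|^2_{L^2(\partial\Omega_k)}\lesssim d^2|u|^2_{H^1(\Omega_k)}$;
\item for edge or vertex interfaces it invokes the monotone face chain of Lemma~\ref{lem:sigma_lsigma_p p<l-1}, which is exactly your chain.
\end{itemize}

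\textbf{What each route buys.} Your weighted Cl\'ement construction replaces subdomain-wide projections by patch averages, so everything stays inside one patch $\omega_x$. The estimate is element-local, and the chain length and the Poincar\'e/Friedrichs constants are controlled by shape regularity alone. You need neither the layer decomposition (nor a bound on $m$) nor trace inequalities on the whole Lipschitz boundary $\partial\Omega_k$. The Lipschitz assumption enters only through face-connectedness of each $\omega_x\cap\Omega_l$, since it rules out pinching at $x$. The paper's subdomain-level construction, in turn, is the framework it reuses for thorny vertices and edges and for the $S_h$ results. There, face values are replaced by $P_Fu$, values on thorny edges are set to zero, and edge terms go through the edge lemma with logarithmic factors. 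A patch-average interpolant would have to be redesigned for those cases.

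\textbf{A caution on the step you flagged.} Finiteness of the number of subdomains does not make your walk terminate once you allow steps to neighbours with \emph{equal} coefficient. The walk can bounce back and forth, and since $\mathcal S_k$ is defined with $\alpha_l\ge\alpha_k$, it can genuinely get stuck. For example, take five wedge-shaped subdomains around an edge with coefficients $1,1,\tfrac12,10,\tfrac12$ in cyclic order. The two subdomains with coefficient $1$ share a face through the edge, so the edge is isolated in no $\mathcal S_k$. Yet no face chain with coefficients $\ge 1$ leads from either of them to the subdomain with coefficient $10$.

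Your induction is sound when each step strictly increases $\alpha$. In general you should take the existence of the monotone face chain (Petzoldt's formulation of quasi-monotonicity) as the hypothesis. The paper's Lemma~\ref{lem:sigma_lsigma_p p<l-1} asserts the same chain without addressing ties, so this is not a weakness of your route relative to the paper's.
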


The above theorem improves upon Theorems 4.3 and 4.7 in \cite{bramble1991some} in the sense that weaker assumptions are required. Unlike Theorem 4.7 in \cite{bramble1991some}, which requires each subdomain to intersect \( \partial \Omega \) with positive $2$-dimensional Lebesgue measure, our result only assumes that the decomposition \eqref{eq:domain decomposition} contains no thorny vertex or thorny edge, making it more applicable to irregular decompositions.

The above result holds for general \( H^1 \) functions. For functions in a refined finite element space \( S_h(\Omega) \), the absence of thorny vertex alone suffices to ensure a quasi-uniform error estimate controlled by $H^1$ semi-norm, even if thorny edges are present, as shown in the next theorem.

\begin{theorem} 
\label{thm:the case satisfying no thorny vertex on S_h(Omega) refine}
Assume that  the  decomposition \eqref{eq:domain decomposition} contains no thorny vertex. Then, for any \( u \in S_{h}(\Omega) \), we have the quasi-uniform error estimate
\begin{equation*}
\| (I - Q_d^\alpha) u \|_{L^2_\alpha(\Omega)} \leq C d \log(H/d)^{1/2}\log(H/h)^{1/2}  | u |_{H^1_\alpha(\Omega)},
\end{equation*}
where the constant \( C \) depends only on the distribution of the coefficients \( \{\alpha_i\}_{i=1}^{N_0} \), but is independent of the variations of the coefficients, the mesh sizes $d$ and \( h \).
\end{theorem}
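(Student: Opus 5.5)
Since $Q_d^\alpha$ is the best approximation from $S_d(\Omega)$ in $\|\cdot\|_{L^2_\alpha(\Omega)}$, it suffices to construct, for each $u\in S_h(\Omega)$, an operator $\Pi_d u\in S_d(\Omega)$ with
\[
\|u-\Pi_d u\|_{L^2_\alpha(\Omega)}^2\le C d^2 \log(H/d)\log(H/h)\, |u|_{H^1_\alpha(\Omega)}^2 .
\]
I will build $\Pi_d$ in the style of the weighted Clément/Scott--Zhang interpolants of \cite{Petzoldt2002} and \cite{Hu2023}. For every coarse node $x\in\mathcal N_d$, pick a subdomain $\Omega_{k(x)}$ containing $x$ of maximal weight $\alpha_{k(x)}=\max\{\alpha_l: x\in\overline\Omega_l\}$, and set $(\Pi_d u)(x)$ as a local average of $u$ over a $d$-patch inside $\Omega_{k(x)}$ (for $x$ on $\partial\Omega$ set the value $0$). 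In subdomains $\Omega_l$ with $\alpha_l=\alpha_{k(x)}$ this is a standard Scott--Zhang estimate and yields $\|u-\Pi_d u\|_{L^2(\Omega_l)}\lesssim d|u|_{H^1}$ locally. The real work is for coarse nodes $x$ lying on $\partial\Omega_l$ with $\alpha_l<\alpha_{k(x)}$: there the error on the $d$-patch in $\Omega_l$ is controlled by $d^{3/2}|u(x)-\bar u_{k(x)}|$, and we need to pay it with $\alpha_l^{1/2}|u|_{H^1(\Omega_l)}$ rather than with $\alpha_{k(x)}$.

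I split coarse nodes by the type of their location. If $x$ lies in the interior of a face shared by $\Omega_l$ and $\Omega_{k}$, or on an edge/vertex of $\Omega_l$ where the upper intersection $\mathcal S_l$ contains a face of $\Omega_l$ through $x$, then $u$ is continuous across that face, so the jump between averages is estimated by a trace/Poincaré argument along a chain of $d$-patches across the face; this gives only $d|u|_{H^1(\Omega_l)}$ and $d|u|_{H^1(\Omega_k)}$, and $\alpha_l\le\alpha_k$ makes the weighting harmless (this is the quasi-monotone mechanism of Theorem \ref{thm: quasi-monotonicity}). Since no thorny vertex exists, every remaining bad node lies in the interior of a thorny edge $E$ with $\Omega_l\in\mathscr S_E^*$, i.e.\ $E$ is isolated in $\mathcal S_l$ and all face-neighbours of $\Omega_l$ along $E$ have smaller weight (Proposition 4.1). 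For these nodes, instead of using the max-weight subdomain I will assign the nodal value from $\Omega_l$ itself where possible; where not (the value must be single-valued on $E$), the total contribution is
\[
\alpha_l\, d^3\!\!\sum_{x\in\mathcal N_d(E)} |u(x)-\bar u_{l}(x)|^2\;\lesssim\; \alpha_l\, d^2\, \|u-c\|_{L^2(E)}^2 ,
\]
with $c$ a suitable constant on $\Omega_l$, up to local Poincaré terms.

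The main obstacle is bounding this edge term by a semi-norm. Here Lemma \ref{lem:edge lemma} is used twice: at scale $h$, since $u\in S_h$, $\|u-c\|_{L^2(E)}^2\lesssim \log(H/h)\|u-c\|^2_{H^{1/2}(\partial\Omega_l)}\lesssim\log(H/h)|u|^2_{H^1(\Omega_l)}$ after choosing $c$ as the mean and using the trace theorem and Poincaré on the Lipschitz polyhedron (scaled to $H$); the coarse-scale discrete sum over $\mathcal N_d(E)$ versus the continuous edge norm costs another $\log(H/d)$ factor via the edge lemma applied on $\tilde S_d$ (or an interpolation to $S_d$ first). The constant shift $c$ must be absorbed: it is matched across the higher-weight subdomains sharing $E$ through the coarse values, which is where I expect the delicate bookkeeping, and where I would first try the "choose the value from $\Omega_l$ for $\Omega_l\in\mathscr S_E^*$" assignment so that no constant is needed. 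Summing over the finitely many subdomains, faces and thorny edges gives the claimed bound with $C$ depending only on the distribution of $\{\alpha_i\}$.
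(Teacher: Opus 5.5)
\textbf{There is a genuine gap at the thorny-edge step, which is the only new ingredient of this theorem.} Several pieces of your plan are sound and match the paper. Reducing to the construction of some $\Pi_d u\in S_d(\Omega)$ is right, as in the paper. Taking coarse interface values from the higher-weight side and paying face jumps by the quasi-monotone chain argument corresponds to the paper's multilayer construction. The fine-scale bound $\|u-\gamma_{\partial\Omega_l}(u)\|^2_{L^2(E)}\lesssim\log(H/h)|u|^2_{H^1(\Omega_l)}$ is also correct.

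The thorny-edge step, however, is left open, and the remedy you lean towards fails, for three reasons.
\begin{enumerate}
\item If, for $\Omega_l\in\mathscr S_E^*$, you take the values on $E$ from $\Omega_l$ itself, the jump moves into the higher-weight neighbour $\Omega_k$ that meets $\Omega_l$ only along $E$. Paying for it there needs $\alpha_k|u|^2_{H^1(\Omega_l)}$ with $\alpha_k>\alpha_l$, and that quantity is not available.
\item Your constant $c$, a mean on $\Omega_l$, belongs to $\Omega_l$ alone. The two sides' constants therefore still have to be compared, and this is exactly the ``bookkeeping'' you postpone.
\item The displayed bound $d^3\sum_{x\in\mathcal N_d(E)}|u(x)-\bar u_l(x)|^2\lesssim d^2\|u-c\|^2_{L^2(E)}$ is false if $u(x)$ means point values of $u\in S_h$ at coarse nodes. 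Such a $u$ can carry spikes of width $h$ at those nodes, and point sampling then only yields the old factor $d/h$. The coarse edge lemma does not relate coarse samples of a fine function to its $L^2(E)$ norm.
\end{enumerate}

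\textbf{The missing idea, which is what the paper uses,} is to keep the higher-weight assignment on $E$ and to compare both sides with the edge mean $\gamma_E(u)$. This is a single number shared by all subdomains containing $E$, because $u$ is single-valued there. With the local $L^2$ projections $u_{d,l},u_{d,k}\in S_d$,
\[
d^3\sum_{x\in\mathcal N_d(E)}(u_{d,l}-u_{d,k})(x)^2\lesssim d^2\big(\|u_{d,l}-\gamma_E(u)\|^2_{L^2(E)}+\|u_{d,k}-\gamma_E(u)\|^2_{L^2(E)}\big).
\]
So $u$ enters only through coarse finite element functions and its edge mean, never through coarse point samples.

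Each one-sided term involves only its own subdomain $\Omega_j$:
\begin{itemize}
\item The edge lemma on $S_d$, applied to $u_{d,j}-\gamma_E(u)$, gives $\log(H/d)\|u_{d,j}-\gamma_E(u)\|^2_{H^1(\Omega_j)}$.
\item Inserting $\gamma_{\partial\Omega_j}(u)$, the $H^1$-stability of the projection together with Poincar\'e's inequality handles $u_{d,j}-\gamma_{\partial\Omega_j}(u)$.
\item The constant $|\gamma_E(u)-\gamma_{\partial\Omega_j}(u)|$ is controlled by $\|u-\gamma_{\partial\Omega_j}(u)\|_{L^2(E)}$, hence by $\log(H/h)^{1/2}|u|_{H^1(\Omega_j)}$. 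This is your fine-scale step.
\end{itemize}
The error then sits in the lower-weight $\Omega_l$, and $\alpha_l\le\alpha_k$ turns $\alpha_l|u|^2_{H^1(\Omega_k)}$ into $\alpha_k|u|^2_{H^1(\Omega_k)}$. This gives the $d^2\log(H/d)\log(H/h)$ bound.

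\textbf{A smaller omission in your classification of bad nodes.} The same edge-link estimate, chained along nondecreasing weights, is also needed at nodes where the face chain breaks at an intermediate subdomain, and at endpoints of $E$. Your classification of bad nodes leaves both cases out.
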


This result can be viewed as a special case of Theorem~\ref{thm: quasi-monotonicity} for finite element functions, or as an improvement over Theorem~2.2 in \cite{xu1991counterexamples}. The assumption here is weaker, requiring only the absence of thorny vertex, whereas Theorem~2.2 in \cite{xu1991counterexamples} assumes that \( \mathrm{meas}_1(\partial \Omega_i \cap \partial \Omega) > 0 \) holds for all \( i \).

\subsection{General case}\label{sec:main results for error estimates}
In this subsection, we provide error estimates for the weighted \( L^2 \) projection under the general setting where both thorny edges and thorny vertices may appear. The results are given separately for general \( H^1 \) functions and for the special case \( u \in S_{h}(\Omega) \).

We now define two subdomain sets that partition \( \Omega \) into two parts: one requiring special treatment due to the presence of thorny vertices or edges, and the other satisfying the quasi-monotonicity condition. Let \( \mathcal{V}^* \) and \( \mathcal{E}^* \) denote the sets of all thorny vertices and thorny edges, respectively. Define
\begin{equation}
\label{eq:s*}
\mathscr{S}^* = \big( \bigcup_{V\in \mathcal{V}^*} \mathscr{S}_V^* \big) \cup \big( \bigcup_{E\in \mathcal{E}^*} \mathscr{S}_E^* \big), \quad \mathscr{S}_*^c = \left\{ \Omega_{k} \right\}_{k=1}^{N_0} \setminus \mathscr{S}^*.
\end{equation}

\begin{theorem}
\label{thm:thorny vertex and edge}
For any \( u \in H_0^1(\Omega) \), the following estimate holds:
\begin{equation*}
\| (I - Q_d^\alpha) u \|^2_{L^2_\alpha(\Omega)} \leq C d^2\big(\log(H/d)\sum_{\Omega_k\in\mathscr{S}^*} \alpha_k \|u\|^2_{H^1(\Omega_k)} + \sum_{\Omega_k\notin\mathscr{S}^*} \alpha_k |u|^2_{H^1(\Omega_k)}\big),
\end{equation*}
where the constant \( C \) depends only on the distribution of the coefficients \( \{\alpha_i\}_{i=1}^{N_0} \), but is independent of the variations of the coefficients and the mesh size \( d \).
\end{theorem}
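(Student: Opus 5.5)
Since $Q_d^\alpha$ is the $L^2_\alpha$-orthogonal projection onto $S_d(\Omega)$, it is the best approximation in $\|\cdot\|_{L^2_\alpha(\Omega)}$. Hence
\[
\|(I-Q_d^\alpha)u\|_{L^2_\alpha(\Omega)}\le \|u-\Pi_d u\|_{L^2_\alpha(\Omega)}
\]
for any $\Pi_d u\in S_d(\Omega)$. The plan is to build a coefficient-adapted quasi-interpolant $\Pi_d u$ and estimate the error subdomain by subdomain. The interpolant has the form $\Pi_d u=\sum_{x\in\mathcal N_d}(\pi_x u)\,\varphi_x$, where $\pi_x u$ is a local average taken over a patch chosen according to the weights. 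We start from a Scott--Zhang/Clément type operator. For a node $x$ interior to some $\Omega_k$ we average over an element patch inside $\Omega_k$. For a node on an interface we average over a $(d$-size$)$ face/element patch lying in the subdomain of \emph{largest} weight among those containing $x$, or on $\partial\Omega$, where the value $0$ is used. The difference from the classical choice lies at nodes on thorny edges and thorny vertices: there the "largest-weight" subdomain $\Omega_r\in\mathscr S^*$ touches $x$ only through an isolated edge or vertex of $\mathcal S_r$, so no face-based averaging from a higher-weight neighbour is available. At these nodes we therefore average over a patch inside $\Omega_r$ itself.

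The local error on $\Omega_k$ then has two parts. The first is the usual term
\[
\alpha_k\|u-\Pi_d u\|^2_{L^2(\Omega_k)}\lesssim \alpha_k d^2|u|^2_{H^1(\omega_k)}
\]
coming from the interior and from nodes whose average lies in $\Omega_k$, where $\omega_k$ is a $d$-neighbourhood. The second comes from interface nodes whose average is taken from a neighbour $\Omega_l$ with $\alpha_l\ge\alpha_k$. That contribution is bounded by $d^3\alpha_k|\pi^{(l)}_x u-\pi^{(k)}_x u|^2$. If $\Omega_k$ and $\Omega_l$ share a face containing $x$, a trace/Poincaré argument across that face bounds it by
\[
d^2\alpha_k(|u|^2_{H^1(\text{patch in }\Omega_k)}+|u|^2_{H^1(\text{patch in }\Omega_l)})\le d^2(\alpha_k|u|^2_{H^1}+\alpha_l|u|^2_{H^1}),
\]
using $\alpha_k\le\alpha_l$. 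The quasi-monotone structure of $\mathcal S_k$ for $\Omega_k\notin\mathscr S^*$ guarantees that such a face-connected chain to the chosen maximal subdomain exists. Proposition 4.1 and its vertex analogue are used here: when the chain passes a thorny edge, the neighbours sharing a face with $\Omega_r$ lie in $\mathscr S_E^c$ and have smaller weight, so the comparison always goes "downhill". This is the argument of Theorem \ref{thm: quasi-monotonicity}, and it yields the semi-norm terms for $\Omega_k\notin\mathscr S^*$.

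The main obstacle is the subdomains $\Omega_r\in\mathscr S^*$. Near a thorny edge or vertex, $\Omega_r$ must be compared with a higher-weight subdomain it touches only along $E$ or at $V$, and no face is available to transfer the average. Our approach is to compare directly through the edge: we bound the edge-patch averages by edge traces,
\[
d^3|\pi_x^{(l)}u-\pi_x^{(r)}u|^2\lesssim d^2\big(\|u\|^2_{L^2(E\cap \text{patch})}\ \text{from each side}\big)+d^2(\text{semi-norms}).
\]
The edge $L^2$ norm is then controlled by the continuous analogue of the edge Lemma \ref{lem:edge lemma} on a $d$-scale: summing over the $O(H/d)$ nodes along $E$ gives $d^2\log(H/d)\|u\|^2_{H^1(\Omega_r)}$, by applying Lemma \ref{lem:edge lemma} to a $d$-level interpolant together with a scaling $\|\cdot\|_{H^{1/2}(\partial\Omega_r)}\lesssim\|\cdot\|_{H^1(\Omega_r)}$. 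The full norm is needed because the estimate has no constant-mode invariance once the comparison goes upward in weight. Thorny vertices are handled the same way, with the (easier) point-value bound $|u|^2\lesssim \log(H/d)\,d^{-1}\|u\|^2_{H^1}$ for discrete functions, after first replacing $u$ by a stable $d$-level quasi-interpolant.

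Finally, we sum over $k$. Only finitely many thorny edges and vertices occur and each patch overlaps boundedly, so combining the two cases gives the stated bound with $C$ depending only on the coefficient distribution.
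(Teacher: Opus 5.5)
Your quasi-monotone part is a legitimate local alternative to the paper's multilayer construction with global projections $u_{d,k}$ (nodal averages from the largest-weight subdomain, nondecreasing face chains as in Petzoldt). The gap is at the thorny features, in the step you yourself call the main obstacle.

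At a thorny edge $E$, $\Omega_r\in\mathscr{S}_E^*$ meets some $\Omega_l$ with $\alpha_l\ge\alpha_r$ only along $E$. Typically $\Omega_l\notin\mathscr{S}^*$; for instance, if $\Omega_l$ is an interior local maximum then $\mathcal{S}_l=\emptyset$. So the theorem grants you only $\alpha_l|u|^2_{H^1(\Omega_l)}$ there. Your text is ambiguous about which side supplies the nodal values on $E$, but neither choice works.
\begin{itemize}
\item If the values come from $\Omega_r$ (as you first say), the mismatch is paid on $\Omega_l$ with weight $\alpha_l$. Your two-sided trace bound then gives $\alpha_l d^2\log(H/d)\big(\|u\|^2_{H^1(\Omega_l)}+\|u\|^2_{H^1(\Omega_r)}\big)$. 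This is a full norm on a subdomain outside $\mathscr{S}^*$, and it puts the weight $\alpha_l$ rather than $\alpha_r$ on $\Omega_r$.
\item If the values come from $\Omega_l$, then $\Omega_r$ pays $\alpha_r d^2\log(H/d)\|u\|^2_{H^1(\Omega_l)}$, whose $L^2$ part is not on the right-hand side. Subtracting a constant does not help, because $u$ may sit near different constants on $\Omega_l$ and $\Omega_r$.
\end{itemize}
Your final line keeps only $\|u\|^2_{H^1(\Omega_r)}$, so it silently drops the $\Omega_l$ side. The same problem arises at thorny vertices with your point-value bound.

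The paper proceeds differently on $D^*$. It cuts auxiliary subdomains $D_{i,r}\subset\Omega_{i,r}$ around each thorny feature and sets $u_d=0$ at all $d$-nodes on edges of $\partial D_{i,r}$ and $u_d=P_Fu$ on faces. Inside $D_{i,r}$ only the one-sided term $d^3\sum_{x_i\in E}u_{d,ir}(x_i)^2\lesssim d^2\log(H/d)\|u\|^2_{H^1(\Omega_{i,r})}$ then appears, with no difference of traces. However, the higher-weight neighbour across $E$ also sees $u_d=0$ on $E$. The paper disposes of that term (its lemmas on $\Gamma_G$ and Proposition 4.1) by assuming a nondecreasing face chain from $G$ up to $\Omega_{i,r}$, i.e.\ $\alpha_G\le\alpha_{i,r}$, and this fails for exactly this $\Omega_l$.

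In fact the term cannot be controlled at all. Take $\alpha_l=1$, $\alpha_r=\delta$, and face neighbours of $E$ of weight $\delta^4$. Let $u\approx 1$ on $\Omega_l$ and $u\approx 0$ on $\Omega_r$. Glue them near $E$ by a logarithmic cutoff in a tube of radius $R=\delta d$ with $\log(R/\rho)=\delta^{-2}$, which is possible because edges have zero $H^1$-capacity. Interpolate angularly in the two low-weight subdomains adjacent to $E$. Every $w\in S_d(\Omega)$ has a single value at each node of $E$, so (with $H$ fixed) $\|(I-Q_d^\alpha)u\|^2_{L^2_\alpha(\Omega)}\gtrsim\delta d^2$. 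The right-hand side of the theorem, however, is only $O(\delta^2d^2\log(H/d))$.

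So no argument along your lines, or the paper's, can close. A valid estimate needs an extra term such as $\alpha_r\|u\|^2_{H^1(\Omega_l)}$ for the higher-weight subdomains that meet $\mathscr{S}^*$ only at thorny edges or vertices.
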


The above theorem indicates that, within subdomains containing thorny vertices or thorny edges, the error of the weighted \( L^2 \) projection for \( u \in H_0^1(\Omega) \) increases significantly. The estimate requires to be controlled by the full \( H^1 \)-norm and involves an additional logarithmic factor. In contrast, within subdomains satisfying the quasi-monotonicity condition, the projection error remains controlled by the \( H^1 \) semi-norm. Therefore, the presence of thorny vertices and edges is a critical factor affecting the approximation properties of the weighted \( L^2 \) projection.

We now present the error estimate for the weighted \( L^2 \) projection of functions in \( S_{h}(\Omega) \) in the general case with thorny vertices. Since only thorny vertices affect the estimate, the subdomain sets are redefined to exclude contributions from thorny edges. Analogous to the discussion for general \( H^1 \) functions, we first define two sets:
\begin{equation}
\label{eq:tilde s*}
\mathscr{\tilde{S}}^* = \bigcup_{V\in \mathcal{V}^*} \mathscr{S}_V^* , \quad \mathscr{\tilde{S}}_*^c = \left\{ \Omega_{k} \right\}_{k=1}^{N_0} \setminus \mathscr{\tilde{S}}^*.
\end{equation}

\begin{theorem}
\label{thm:thorny vertex and edge refine} Let \( S_{h}(\Omega) \) be a refined finite element space of $S_h(\Omega)$.
For any \( u \in S_{h}(\Omega) \), we have
\begin{equation*}
\| (I - Q_d^\alpha) u \|^2_{L^2_\alpha(\Omega)} \leq Cd^2\log(H/d)\log(H/h)\big(\sum_{\Omega_k\in\mathscr{\tilde{S}}^*}\alpha_k\|u\|^2_{H^1(\Omega_k)}+\sum_{\Omega_k\notin\mathscr{\tilde{S}}^*}\alpha_k|u|^2_{H^1(\Omega_k)}\big),
\end{equation*}
where the constant \( C \) depends only on the distribution of the coefficients \( \{\alpha_k\}_{k=1}^{N_0} \), but is independent of the variations of the coefficients, the mesh sized $d$ and \( h \).
\end{theorem}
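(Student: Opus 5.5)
We reduce the estimate to the construction of a suitable quasi-interpolant. Since $Q_d^\alpha$ is the $L^2_\alpha$-orthogonal projection onto $S_d(\Omega)$, for any $w\in S_d(\Omega)$ we have
\[
\|(I-Q_d^\alpha)u\|_{L^2_\alpha(\Omega)}\le \|u-w\|_{L^2_\alpha(\Omega)} .
\]
So it suffices to build an operator $\Pi_d:S_h(\Omega)\to S_d(\Omega)$ whose local weighted $L^2$ error obeys the stated bound. Following the coefficient-dependent interpolants of \cite{Petzoldt2002} and \cite{Hu2023}, we define $\Pi_d u$ nodewise. At an interior node $x$ of some $\Omega_k$, $(\Pi_d u)(x)$ is a local Scott--Zhang average over a $d$-patch inside $\Omega_k$. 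At a node on an interface, we choose among the adjacent subdomains one, say $\Omega_{k(x)}$, with maximal weight, and take the Scott--Zhang average over an element of $\mathcal T_d$ lying in $\overline{\Omega}_{k(x)}$ (or set the value to $0$ if $x\in\partial\Omega$).

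\textbf{Step 1: reduction to trace terms.} The standard Scott--Zhang analysis gives, on each $\Omega_k$, a bound $d^2|u|^2_{H^1}$ on a neighborhood patch plus a contribution from interface nodes where $k(x)\neq k$. For such a node $x$ with $\alpha_{k(x)}\ge\alpha_k$, we need to bound $\alpha_k d^3\,|(\Pi_d u)(x)-u_k(x)|^2$, where $u_k$ is the average taken from the $\Omega_k$ side. If $x$ lies in a face $F\subset\mathcal S_k$ shared with $\Omega_{k(x)}$, the continuity of $u$ across $F$ lets us route through a chain of elements crossing $F$. This gives a bound by $d^2|u|^2_{H^1}$ on the neighboring patches, weighted by $\alpha_k\le\alpha_{k(x)}$, hence controlled by weighted seminorms. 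This is exactly the quasi-monotone argument, and it also handles the nodes on thorny edges once we pass through a face as in Proposition 4.1.

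\textbf{Step 2: nodes on thorny edges.} At nodes on a thorny edge $E$ where the maximal-weight neighbor $\Omega_r\in\mathscr S_E^*$ touches $\Omega_k$ only along $E$, we instead use the discrete trace along $E$. We write the error as $\alpha_k d\sum_{x\in\mathcal N_d(E)} d^2|u_r(x)-u_k(x)|^2$ and compare both sides with the $L^2(E)$ trace of $u$, which is well defined because $u\in S_h$. Subtracting subdomain averages $\bar u_r$ and $\bar u_k$ (the difference of averages is handled through a face path of subdomains of decreasing weight, as in \cite{Hu2023}, using that $E$ is not thorny in the vertex sense), the edge lemma (Lemma~\ref{lem:edge lemma}) on the $h$-mesh gives $\|u-\bar u_r\|^2_{L^2(E)}\lesssim \log(H/h)\,|u|^2_{H^1(\Omega_r)}$ via $H^{1/2}(\partial\Omega_r)$ and the trace theorem. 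A second application on the $d$-mesh, comparing the $d$-averages with the trace, produces the factor $\log(H/d)$. The weights work out because $\alpha_k\le\alpha_r$, so the edge terms involve only seminorms. This is why thorny edges do not appear in $\tilde{\mathscr S}^*$, and it is the step where the combined factor $d^2\log(H/d)\log(H/h)$ arises.

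\textbf{Step 3: thorny vertices.} At a thorny vertex $V$, no face or edge path is available from $\Omega_k\in\mathscr S_V^*$ to the larger-weight neighbors. There we set the nodal value $0$, or equivalently compare with zero. A discrete Sobolev-type point estimate on $S_h$, $|u(V)|^2\lesssim \log(H/h)\|u\|^2_{H^1(\Omega_k)}/H$, together with $d^3$ scaling and the neighboring $d$-patch, gives a contribution $\lesssim d^2\log(H/h)\,\alpha_k\|u\|^2_{H^1(\Omega_k)}$ for $\Omega_k\in\tilde{\mathscr S}^*$. This is where the full norm enters. Summing over the finitely many nodes, edges and vertices, whose number depends only on the coefficient distribution, yields the theorem.

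\textbf{Main obstacle.} The main obstacle is Step 2: arranging the comparison along a thorny edge so that only seminorms of larger-weight subdomains appear, and obtaining the product of two logarithms rather than something worse. I would first try to split the difference as (coarse average minus $h$-trace on $E$) plus (trace on $E$ minus subdomain mean), and apply Lemma~\ref{lem:edge lemma} with $h$ and with $d$ respectively.
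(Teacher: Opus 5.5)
Your architecture matches the paper's in its main lines. You use best approximation in $L^2_\alpha$ and an explicit coarse function: a max-weight Scott--Zhang interpolant, where the paper uses the multilayer $u_d$ built from local $L^2$ projections $u_{d,k}$ plus auxiliary subdomains $D_{i,r}$. For thorny edges you apply Lemma~\ref{lem:edge lemma} once on the $d$-mesh and once on the $h$-mesh, which is the proof of Lemma~\ref{lem:the case satisfying no thorny vertex on S_h(Omega) refine}.

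\textbf{Step~2.} The parenthetical face path for $\bar u_r-\bar u_k$ fails in exactly the case Step~2 is for. In the checkerboard, $\Omega_k$ itself lies in $\mathscr{S}_E^*$. By Proposition~4.1, every face neighbour of $\Omega_k$ through $E$ is then strictly lighter, so any face path leaves $\Omega_k$ through some $\Omega_a$ with $\alpha_a<\alpha_k$. The resulting term $\alpha_k|u|^2_{H^1(\Omega_a)}$ cannot be absorbed. No difference of two averages should ever arise. Route both sides through one quantity of the common trace, as the paper does with $\gamma_E(u)$:
\[
\|u_r-u_k\|_{L^2(E)}\le\|u_r-\gamma_E(u)\|_{L^2(E)}+\|u_k-\gamma_E(u)\|_{L^2(E)} .
\]
Then work inside one subdomain at a time. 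Apply Lemma~\ref{lem:edge lemma} on the $d$-mesh, and bound $\|u_r-\gamma_E(u)\|_{H^1(\Omega_r)}$ by $\|u_r-\gamma_{\partial\Omega_r}(u)\|_{H^1(\Omega_r)}+\|\gamma_E(u-\gamma_{\partial\Omega_r}(u))\|_{H^1(\Omega_r)}$. The last term is handled by Lemma~\ref{lem:edge lemma} on the $h$-mesh.

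\textbf{Step~3, the point estimate.} Your bound $|u(V)|^2\lesssim\log(H/h)\|u\|^2_{H^1}/H$ is the two-dimensional discrete Sobolev inequality, and it is false in 3D: $u=\varphi_V$ gives $|u(V)|=1$ but $\|u\|^2_{H^1}\sim h$. Any route through the fine value $u(V)$ breaks the estimate. Bound the coarse value directly:
\[
d^3|u_k(V)|^2\lesssim\|u\|^2_{L^2(K)}\lesssim d^2\|u\|^2_{L^6(K)}\lesssim d^2\|u\|^2_{H^1(\Omega_k)} .
\]
This corresponds to the paper's inequality $d^3\|u_{d,ir}\|_{L^\infty}^2\lesssim d^2\|u_{d,ir}\|_{H^1}^2$.

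\textbf{Step~3, the shared nodal value.} The value at $V$ is shared by every subdomain meeting at $V$. So the zero must also be paid for by each $\Omega_l\in\mathscr{S}_V^c$, for which only $\alpha_l|u|^2_{H^1(\Omega_l)}$ is available, and your proposal does not treat these subdomains. The paper's device (Lemma~\ref{lem:Gamma_G-estimate}) is a face chain of nondecreasing weights from $\Omega_l$ to some $\Omega_{i,r}\in\mathscr{S}_V^*$. This needs $\alpha_l\le\alpha_{i,r}$, and it fails for your $\Omega_{k(V)}$ whenever the maximal weight at $V$ is attained by a single subdomain. That subdomain lies outside $\mathscr{S}_V^*$, and near $V$ it meets each member of $\mathscr{S}_V^*$ only in the point $V$.

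In that situation no choice of value at $V$ closes Step~3, because the stated bound itself fails. Take eight unit cubes around $V$ inside a weight-$1$ shell. Give $O_{+++}$ weight $A=2B$, $O_{---}$ weight $B$, and all others weight $1$; then $\tilde{\mathscr{S}}^*=\{O_{---}\}$. Let $u\in S_h$ equal $1$ on $\overline{O}_{+++}$ and $\varphi_V$ on $O_{---}$, with $O(1)$ seminorm elsewhere; this is possible since a degree-zero homogeneous function has finite energy in 3D. The right-hand side is $O\big(d^2\log(1/d)\log(1/h)(Bh+1)\big)$. On the two coarse elements at $V$, any $v\in S_d$ costs $\gtrsim d^3\min_t\big(A(1-t)^2+Bt^2\big)\gtrsim Bd^3$. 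With $B=1/h$ the ratio is $\gtrsim d/(h\log(1/d)\log(1/h))$, which is unbounded. Such subdomains must therefore be added to $\tilde{\mathscr{S}}^*$. The paper's Lemma~\ref{lem:Gamma_G-estimate} has the same hole.
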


This theorem shows that in subdomains containing thorny vertices, the weighted \( L^2 \) projection error for \( u \in S_{h}(\Omega) \) is significantly increased and must be controlled by the full \( H^1 \) norm. In contrast, if there is no thorny vertex (there may be thorny edges), the projection error remains controlled by the \( H^1 \) semi-norm. Therefore, the presence of thorny vertices is a critical factor affecting the approximation properties of the weighted \( L^2 \) projection.

Theorem~\ref{thm:thorny vertex and edge refine} is a special case of Theorem~\ref{thm:thorny vertex and edge} for finite element functions; both highlight the influence of thorny geometric structures on the weighted \( L^2 \) projection error. Notably, Theorem~\ref{thm:thorny vertex and edge refine} shows that for functions in \( S_{h}(\Omega) \), only thorny vertices significantly affect the projection accuracy, while thorny edges do not.

\section{Proof of the main results}
This section provides the proofs of the main results. We first consider the case without thorny vertex, followed by the general case involving thorny vertices.

\subsection{Proof for the case without thorny vertex}

In this subsection, we prove Theorem~\ref{thm: quasi-monotonicity} and Theorem~\ref{thm:the case satisfying no thorny vertex on S_h(Omega) refine} in sequence. As the proof of the latter closely follows that of the former, with differences arising only in the treatment of thorny edges, we provide details only for the handling of thorny edges in the proof of Theorem~\ref{thm:the case satisfying no thorny vertex on S_h(Omega) refine}, and refer to the proof of Theorem~\ref{thm: quasi-monotonicity} for the remaining arguments.

We begin by introducing the concept of a multilayer model, as in Section~5.1.1 of~\cite{Hu2023}. The subdomains \(\{\Omega_k\}_{k=1}^{N_0}\) are partitioned into \(m\) disjoint nonempty subsets \(\Sigma_1, \Sigma_2, \dots, \Sigma_m\), satisfying: 1) Subdomains in the same subset do not intersect; 2) If \(\Omega_{k_l} \in \Sigma_l\), \(\Omega_{k_j} \in \Sigma_j\) with \(l < j\), and $\partial\Omega_{k_l} \cap \partial\Omega_{k_j} \neq \emptyset$, then \(\alpha_{k_l} \geq \alpha_{k_j}\). The number of layers \(m\) depends only on the distribution of \(\{\alpha_k\}\). For convenience, we assume that the layer number $m$ is a constant independent of $d$ (and $h$), which is satisfied in the most applications.

Let \(\Gamma_{k,l}\) denote the interface between two neighboring subdomains \(\Omega_k\) and \(\Omega_l\), i.e.,
\begin{equation}
\partial\Omega_k\cap\partial\Omega_l=\Gamma_{k,l},
\end{equation}
Define the index set \(\Lambda_k^j\) as the set of all indices \(l\) such that \(\Omega_l\) intersects \(\Omega_k\) and \(\Omega_l \in \Sigma_j\), that is,
\begin{equation}
\label{eq:lambda_k^j}
\Lambda_k^j=\{l:\partial\Omega_l\cap\partial\Omega_k\neq\emptyset,\text{ }\Omega_l\in\Sigma_j\}.
\end{equation}

The following lemma is a direct result of the definitions of the quasi-monotonicity condition and the sets $\{\Sigma_l\}$.
\begin{lemma}
\label{lem:sigma_lsigma_l+1}
Assume that the decomposition \eqref{eq:domain decomposition} satisfies the quasi-monotonicity condition. Let $l \geq 2$, and suppose $\Omega_i \in \Sigma_{l-1}$ and $\Omega_j \in \Sigma_l$ with $\Gamma_{i,j} \neq \emptyset$.
Then $\Gamma_{i,j}$ must be a face.
\end{lemma}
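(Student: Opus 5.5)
The plan is a proof by contradiction that combines the ordering of coefficients given by the multilayer model with the structure of the upper intersection $\mathcal{S}_j$ under quasi-monotonicity.

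\emph{Step 1 (ordering).} Since $\Omega_i\in\Sigma_{l-1}$, $\Omega_j\in\Sigma_l$ and $\partial\Omega_i\cap\partial\Omega_j=\Gamma_{i,j}\neq\emptyset$, property 2) of the layers gives $\alpha_i\ge\alpha_j$.

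\emph{Step 2 (location in $\mathcal{S}_j$).} By the definition of the upper intersection, Step 1 gives
\[
\Gamma_{i,j}=\partial\Omega_j\cap\partial\Omega_i\subset\mathcal{S}_j .
\]
In particular $\mathcal{S}_j\neq\emptyset$. Quasi-monotonicity then says that $\mathcal{S}_j$ is a union of (Lipschitz) faces of $\Omega_j$, and that there is no thorny vertex or thorny edge anywhere.

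\emph{Step 3 (contradiction).} Suppose $\Gamma_{i,j}$ is not a face. Since $\Omega_i$ and $\Omega_j$ are Lipschitz polyhedra that are unions of mesh elements, $\Gamma_{i,j}$ is then a union of Lipschitz edges and/or vertices of $\Omega_j$ with zero two-dimensional measure. Take such an edge $E$ (or vertex $V$) of $\Gamma_{i,j}$ that is not contained in any face common to $\Omega_i$ and $\Omega_j$. Then $\Omega_i,\Omega_j\in\mathscr{S}_E$.

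Next look at $E$ from the side of $\Omega_i$. Around $E$ the two subdomains are separated by the other members of $\mathscr{S}_E$. Since $\Omega_i$ meets $\Omega_j$ only along $E$, the faces of $\Omega_i$ containing $E$ are shared with subdomains $\Omega_l\in\mathscr{S}_E$, $l\neq j$. I will argue that the subdomains of largest coefficient around $E$ force $E$ to be isolated in some $\mathcal{S}_k$, $\Omega_k\in\mathscr{S}_E$:
\begin{itemize}
\item If every face-neighbour $\Omega_l$ of $\Omega_i$ at $E$ has $\alpha_l<\alpha_i$, then, because $\Omega_j$ is not a face-neighbour of $\Omega_i$, the set $\mathcal{S}_i$ near $E$ contains $E$ only through $\Omega_j$, and not through any face. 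So $E$ is isolated in $\mathcal{S}_i$, i.e.\ $E$ is thorny.
\item Otherwise some face-neighbour $\Omega_l$ with $\alpha_l\ge\alpha_i$ exists. I then pass to a subdomain of maximal coefficient in $\mathscr{S}_E$ and repeat the argument for it.
\end{itemize}
This case analysis mirrors Proposition 4.1, read in the converse direction. The vertex case is handled in the same way with $\mathscr{S}_V$ in place of $\mathscr{S}_E$. Either way we contradict the absence of thorny edges and vertices, so $\Gamma_{i,j}$ must be a face.

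\emph{Main obstacle.} The delicate point is Step 3. Quasi-monotonicity only guarantees that $E$ lies in \emph{some} face of $\mathcal{S}_j$, and that face may come from a third subdomain rather than from $\Omega_i$. So the inclusion $\Gamma_{i,j}\subset\mathcal{S}_j$ alone does not rule out a lower-dimensional contact.

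If the local maximality argument around $E$ does not close, I would use the explicit construction of the layers in Section~5.1.1 of \cite{Hu2023}. There $\Sigma_1$ consists of the local maxima of the coefficient, and $\Sigma_l$ is built inductively from subdomains attached to $\Sigma_{l-1}$ through faces of their upper intersections. With that construction, $\Omega_j$ is placed in $\Sigma_l$ precisely because it shares a face of $\mathcal{S}_j$ with a member of $\Sigma_{l-1}$. Combined with property 1) (members of the same layer do not intersect), this identifies $\Omega_i$ as that member and $\Gamma_{i,j}$ as a face.
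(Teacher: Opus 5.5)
\paragraph{Verdict.}
Your Step~3 has a genuine gap, and no sharper local analysis can close it. Under the layer properties 1)--2) with nonempty layers, the statement is false for quasi-monotone distributions. The paper only calls the lemma ``a direct result of the definitions'', so it offers nothing that would fill the gap.

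\paragraph{Counterexample.}
Cut $\Omega=(-1,1)^3$ by the planes $x=0$ and $y=0$ into four boxes $\Omega_{NE},\Omega_{NW},\Omega_{SW},\Omega_{SE}$, named by $xy$-quadrant, around the interior edge $E=\{x=y=0\}$. Put $\alpha_{NE}=4$, $\alpha_{NW}=3$, $\alpha_{SE}=2$, $\alpha_{SW}=1$.

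Cyclically adjacent boxes share a face containing $E$. The diagonal pairs $(\Omega_{NE},\Omega_{SW})$ and $(\Omega_{NW},\Omega_{SE})$ meet only along $E$. Apart from faces on $\partial\Omega$:
\begin{itemize}
\item $\mathcal{S}_{NE}$ is empty;
\item $\mathcal{S}_{NW}$ is the face shared with $\Omega_{NE}$;
\item $\mathcal{S}_{SE}$ is the face shared with $\Omega_{NE}$, together with $E$;
\item $\mathcal{S}_{SW}$ is its two interior faces, together with $E$.
\end{itemize}
Whenever $E\subset\mathcal{S}_k$, it lies in a face of $\mathcal{S}_k$. All box vertices lie in faces on $\partial\Omega$. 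Hence there is no thorny edge or vertex, and the distribution is quasi-monotone.

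All four closures meet in $E$, so property 1) forces four singleton layers. Property 2) with distinct coefficients then forces $\Sigma_1=\{\Omega_{NE}\}$, $\Sigma_2=\{\Omega_{NW}\}$, $\Sigma_3=\{\Omega_{SE}\}$, $\Sigma_4=\{\Omega_{SW}\}$. So $\Omega_{NW}\in\Sigma_2$, $\Omega_{SE}\in\Sigma_3$, and $\Gamma_{NW,SE}=E$ is not a face.

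With the tie $\alpha_{NW}=\alpha_{SW}=2$, $\alpha_{SE}=1$, the admissible layering $\Sigma_1=\{\Omega_{NE}\}$, $\Sigma_2=\{\Omega_{SW}\}$ already breaks the case $l=2$.

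\paragraph{Where your case analysis fails.}
Work from $\mathcal{S}_j$, which is the correct side since $\alpha_i\ge\alpha_j$. Quasi-monotonicity puts $E$ in a face of $\mathcal{S}_j$ shared with some $\Omega_t$, $t\ne i$, $\alpha_t\ge\alpha_j$. Property 1) excludes $\Omega_t\in\Sigma_{l-1}$, because $\Omega_t$ and $\Omega_i$ both contain $E$. Nothing excludes $\Omega_t\in\Sigma_p$ with $p\le l-2$ (here $\Omega_t=\Omega_{NE}\in\Sigma_1$), nor, with ties, a later layer.

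Your first bullet runs in the wrong direction. $\Omega_j$ enters $\mathcal{S}_i$ only when $\alpha_j\ge\alpha_i$, i.e.\ $\alpha_j=\alpha_i$. So in general $E\not\subset\mathcal{S}_i$, and ``$E$ is isolated in $\mathcal{S}_i$'' does not follow. Passing to the subdomain of maximal coefficient ($\Omega_{NE}$, whose upper intersection does not contain $E$) produces no thorny edge either.

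The fallback to the construction of \cite{Hu2023} cannot help. In this example the layering is forced by 1)--2) alone; your inductive rule would put both $\Omega_{NW}$ and $\Omega_{SE}$ into $\Sigma_2$, which 1) forbids. Even granting the construction, it would only give $\Omega_j$ \emph{some} face with \emph{some} member of $\Sigma_{l-1}$. It would not show that the face of $\mathcal{S}_j$ containing $E$ comes from $\Sigma_{l-1}$.

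\paragraph{What your Steps 1--2 do prove.}
Assume pairwise distinct coefficients. Take any edge or vertex of $\Gamma_{i,j}$ that is contained neither in $\partial\Omega$ nor in a face common to $\Omega_i$ and $\Omega_j$. It lies in $\Gamma_{j,t}$ for some $\Omega_t$ in a layer $\le l-2$. This recovers the lemma for $l=2$ away from $\partial\Omega$. It is also the kind of statement that item 3) of the construction of $u_d$ actually needs, since that item discards interfaces with earlier layers. It is not, however, the lemma as stated.
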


We further have the following result.
\begin{lemma}
\label{lem:sigma_lsigma_p p<l-1}
Assume that the decomposition \eqref{eq:domain decomposition} satisfies the quasi-monotonicity condition. Let $n \geq 3$, and suppose $\Omega_l \in \Sigma_p$ with $p \leq n - 2$ and $\Omega_k \in \Sigma_n$. If $\Gamma_{k,l}$ is an edge $E$ or a vertex $V$,
then there exist subdomains $\Omega_{t_1}, \Omega_{t_2}, \dots, \Omega_{t_{N_{k,l}}}$ such that
$\alpha_k \leq \alpha_{t_1} \leq \alpha_{t_2} \leq \dots \leq \alpha_{t_{N_{k,l}}} \leq \alpha_l,$
and all the interfaces $\partial\Omega_k \cap \partial\Omega_{t_1}$, $\partial\Omega_{t_{N_{k,l}}} \cap \partial\Omega_l$, and $\partial\Omega_{t_i} \cap \partial\Omega_{t_{i+1}}$ for $i = 1, \dots, N_{k,l} - 1$ are faces containing $E$ or $V$.
\end{lemma}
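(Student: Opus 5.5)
The plan is to use the definition of the thorny structures and the ordering of layers to build, around the common edge or vertex, a chain of subdomains connected through faces with monotone coefficients. Let $\Gamma_{k,l}=E$ (the vertex case is identical, with $\mathscr{S}_V$ and $\mathcal{S}_k$ read at $V$). Since $\Omega_l\in\Sigma_p$ and $\Omega_k\in\Sigma_n$ with $p<n$ and $\partial\Omega_l\cap\partial\Omega_k\neq\emptyset$, property 2) of the layers gives $\alpha_l\ge\alpha_k$. Hence $E\subset\partial\Omega_k\cap\partial\Omega_l\subset\mathcal{S}_k$. By the quasi-monotonicity condition, $\mathcal{S}_k$ is a union of faces of $\Omega_k$, so $E$ is not isolated in $\mathcal{S}_k$. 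Therefore there is a face $F_1\supset E$ of $\Omega_k$ lying in $\mathcal{S}_k$. Since $\Omega_k\in\Sigma_n$ and $\Gamma_{k,l}$ is only an edge, this face is shared with some $\Omega_{t_1}\in\mathscr{S}_E$ with $\alpha_{t_1}\ge\alpha_k$. (If $F_1\subset\partial\Omega$, then $E\subset\partial\Omega$, and one works instead with the neighbours on the other side, handled in the same way.)

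Next I iterate, walking around $E$ through the cyclically ordered subdomains of $\mathscr{S}_E$, so that consecutive members share a face containing $E$. Given $\Omega_{t_i}$ with $\alpha_{t_i}\ge\alpha_k$, I apply the same argument to $\mathcal{S}_{t_i}$. Suppose $\alpha_{t_i}\le\alpha_l$. Then $E\subset\partial\Omega_{t_i}\cap\partial\Omega_l\subset\mathcal{S}_{t_i}$. Quasi-monotonicity then gives a face of $\Omega_{t_i}$ containing $E$, shared with a neighbour $\Omega_{t_{i+1}}$ with $\alpha_{t_{i+1}}\ge\alpha_{t_i}$.

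To ensure progress, I choose the direction around $E$ away from the previously visited subdomain. The cyclic structure of $\mathscr{S}_E$ (a finite ring of subdomains around $E$) guarantees that after finitely many steps the walk reaches a neighbour of $\Omega_l$ across a face containing $E$. If the walk reaches a subdomain with coefficient exceeding $\alpha_l$, I stop at the last subdomain whose coefficient is at most $\alpha_l$. Applying quasi-monotonicity to $\Omega_l$'s side, using the set $\mathcal{S}$ of that last subdomain, yields the face joining it to $\Omega_l$.

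The main obstacle is the monotone chaining, specifically showing that the walk can always be directed so that coefficients stay nondecreasing and terminate at $\Omega_l$. I would argue this by contradiction. Take a maximal monotone face-connected chain from $\Omega_k$ inside $\mathscr{S}_E$ with coefficients at most $\alpha_l$. Let $\Omega_t$ be its last element, and suppose the chain does not reach $\Omega_l$. Then both face-neighbours of $\Omega_t$ around $E$ either have smaller coefficient or are already visited. In that case $E$ is isolated in $\mathcal{S}_t$, although $E\subset\mathcal{S}_t$ because $\alpha_l\ge\alpha_t$. This contradicts quasi-monotonicity.

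The layer hypothesis $p\le n-2$ is used only for $\alpha_l\ge\alpha_k$ and to exclude $\Gamma_{k,l}$ being a face. Finiteness of $\mathscr{S}_E$ gives $N_{k,l}$ bounded by a constant depending only on the distribution of coefficients.
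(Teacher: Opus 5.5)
Your contradiction step does not go through. Maximality of a chain that is monotone \emph{and capped at $\alpha_l$} only tells you that each unvisited face-neighbour $\Omega_s$ of the last element $\Omega_t$ around $E$ satisfies $\alpha_s<\alpha_t$ \emph{or} $\alpha_s>\alpha_l$. You dropped the second alternative. Such an $\Omega_s$ is excluded from your chain, but its face with $\Omega_t$ contains $E$ and lies in $\mathcal{S}_t$ (since $\alpha_s\ge\alpha_t$). So $E$ is not isolated in $\mathcal{S}_t$, and nothing contradicts quasi-monotonicity. The same happens when the visited predecessor has coefficient equal to $\alpha_t$. For the same reason, your fallback (``stop at the last subdomain with coefficient at most $\alpha_l$ and apply quasi-monotonicity'') produces a face of $\mathcal{S}_t$ containing $E$, but nothing says this face is shared with $\Omega_l$.

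This cannot be patched inside your argument. Split $(-1,1)^2\times(0,1)$ by the planes $x=0$ and $y=0$ into boxes $Q_1,Q_2,Q_3,Q_4$, in cyclic order around $E=\{x=y=0\}$, with coefficients $1,3,2,1/2$, and let $\Omega_k=Q_1$, $\Omega_l=Q_3$. Every upper intersection is a union of faces (for $Q_1$, the edge $E$ lies in its face shared with $Q_2$). Also $\alpha_l\ge\alpha_k$ and $\Gamma_{k,l}=E$. Yet the two face-neighbours of $Q_1$ around $E$ have coefficients $3>\alpha_l$ and $1/2<\alpha_k$, so no admissible chain exists. As you say yourself, you use the layer hypothesis only to get $\alpha_l\ge\alpha_k$. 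Your argument therefore applies verbatim to this configuration, whose forced layering $\Sigma_1=\{Q_2\}$, $\Sigma_2=\{Q_3\}$, $\Sigma_3=\{Q_1\}$, $\Sigma_4=\{Q_4\}$ has $p=n-1$, and it would prove something false. Separately, the vertex case is not ``identical'': the subdomains around $V$ do not form a ring, so ``both face-neighbours'' has no meaning there.

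The paper does not extract the upper bound $\alpha_{t_{N_{k,l}}}\le\alpha_l$ from quasi-monotonicity at all; it takes it from the layers. It chooses $\Omega_{t_1},\Omega_{t_2},\dots$ in layers $\Sigma_{p_1},\Sigma_{p_2},\dots$ with $n>p_1>p_2>\dots>p$. Property 2) of the multilayer model then gives the monotone chain, and the strictly decreasing layer index gives termination; this is where $p\le n-2$ is meant to enter. Be aware that the paper asserts, rather than derives, that $\Omega_k$ has a face-neighbour in an intermediate layer, and this can fail. The multilayer model is only required to satisfy 1) and 2). So you can add to the example a subdomain $Z$ with $\alpha_Z=1/2$ that touches only $Q_4$, and place it alone in a layer between those of $Q_3$ and $Q_1$. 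This gives $p=n-2$, while still no admissible chain exists.

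What Lemma~\ref{lem:the case satisfying Quasi-monotonicity Assumption} actually uses is only a face-connected chain around $E$ from $\Omega_k$ to $\Omega_l$ with all $\alpha_{t_i}\ge\alpha_k$. Your ring picture does prove that, with a different argument. For an interior edge with pairwise distinct coefficients around it, $\{\Omega_t\in\mathscr{S}_E:\alpha_t\ge\alpha_k\}$ is a single arc of the ring. Otherwise, the element of largest coefficient in an arc not containing the overall maximum would have $E$ isolated in its upper intersection.
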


\renewcommand{\proofname}{Proof}
\begin{proof}
Assume $\Gamma_{k,l}$ is an edge $E$. By the quasi-monotonicity condition, there exists $\Omega_{t_1} \in \Sigma_{p_1}$ with $n > p_1 > p$ such that $\partial\Omega_k \cap \partial\Omega_{t_1} = F_1$ is a face containing $E$.
When $\partial\Omega_{t_1} \cap \partial\Omega_l$ is also a face, the conclusion has been proved.
Otherwise, if $\partial\Omega_{t_1} \cap \partial\Omega_l = E$ is an edge, we can similarly find $\Omega_{t_2} \in \Sigma_{p_2}$ with $p_1 > p_2 > p$ such that $\partial\Omega_{t_1} \cap \partial\Omega_{t_2} = F_2$ is a face containing $E$, and so on. This process continues until some $\Omega_{t_{N_{k,l}}} \in \Sigma_{p_{N_{k,l}}}$ with $p_{N_{k,l}} > p$ satisfies that $\partial\Omega_{t_{N_{k,l}}} \cap \partial\Omega_l = F_{N_{k,l}+1}$ is a face containing $\Gamma_{k,l}$. By the definition of $\{\Sigma_l\}_{l=1}^m$, it follows that $\alpha_k \leq \alpha_{t_1} \leq \alpha_{t_2} \leq \dots \leq \alpha_{t_{N_{k,l}}} \leq \alpha_l.$
The same argument applies when $\Gamma_{k,l}$ is a vertex.
\end{proof}

For a subdomain $\Omega_k$, let ${\mathcal N}_{d,k}$ denote the set of nodes corresponding to $\{\mathcal T\}_d$ on $\Omega_k$. Let $S_d(\Omega_k)$ be the restriction of $S_d(\Omega)$ on $\Omega_k$.
For a given function $u \in H_0^1(\Omega)$, and let $u_{d,k}$ denote the $L^2$ projection of $u$ into $S_d(\Omega_k)$. Based on the multilayer model, we define the auxiliary function $u_d \in S_d(\Omega)$ as follows:

\begin{enumerate}
  \item[1)] For any \( \Omega_k \in \Sigma_1 \), define $u_h|_{\Omega_k} := u_{h,k}$.

\item[2)] For any \( \Omega_k \in \Sigma_2 \), define
\begin{equation*}
\label{eq:u_h 2}
u_d|_{\Omega_k}=
\begin{cases}
u_{d,l}, & \text{on } \Gamma_{k,l},\quad l\in\Lambda^1_k, \\
u_{d,k},& \text{elsewhere},
\end{cases}
\end{equation*}

  \item[3)] For any \( \Omega_k \in \Sigma_n \) with \( n \geq 3 \), define
  \[
  u_d|_{\Omega_k}=
  \begin{cases}
  u_{d,l}, & \text{on } \Gamma_{k,l}, \quad l \in \Lambda_k^1, \\
  u_{d,l}, & \text{on } \Gamma_{k,l} \setminus \bigcup_{1 \leq p \leq j-1} \bigcup_{t \in \Lambda_k^p} \Gamma_{k,t}, \quad l \in \Lambda_k^j,\ j = 2, \dots, n-1, \\
  u_{d,k}, & \text{elsewhere}.
  \end{cases}
  \]
\end{enumerate}

The following lemma provides a key estimate for the auxiliary function $u_h$, showing that $\|u-u_d\|_{L^2(\Omega_k)}^2$ is bounded by the seminorms of $u$ over $\Omega_k$ and adjacent subdomains with
coefficients no less than $\alpha_k$.

\begin{lemma}
\label{lem:the case satisfying Quasi-monotonicity Assumption}
Let $u \in H_0^1(\Omega)$, and suppose that the decomposition \eqref{eq:domain decomposition} satisfies the quasi-monotonicity condition. Let $u_d$ be the function defined above.
Then, for any $\Omega_k \in \Sigma_n$ with $n = 2, \dots, m$, the following estimate holds:
\begin{equation}
\label{eq:lem of the case1}
\alpha_k\|u-u_d\|_{L^2(\Omega_k)}^2
\lesssim \alpha_k d^2|u|_{H^1(\Omega_k)}^2+\sum_{j=1}^{n-1}\sum_{l\in\Lambda_k^{j}}\alpha_l d^2|u|_{H^1(\Omega_l)}^2,
\end{equation}
where the hidden constant depends only on the distribution of the coefficients $\{\alpha_i\}_{i=1}^{N_0}$ (may depend on the layer number $m$),
but is independent of the variations of the coefficients and the mesh size $d$.
\end{lemma}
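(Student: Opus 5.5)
We split $u-u_d=(u-u_{d,k})+(u_{d,k}-u_d)$ on $\Omega_k$. The first term is handled by the standard $L^2$-projection estimate on the Lipschitz polyhedron: $\|u-u_{d,k}\|_{L^2(\Omega_k)}\lesssim d|u|_{H^1(\Omega_k)}$. The difference $w_k:=u_d-u_{d,k}$ lies in $S_d(\Omega_k)$ and vanishes at all nodes of $\Omega_k$ except those on the interfaces $\Gamma_{k,l}$ with $l\in\Lambda_k^j$, $j\le n-1$, where its nodal values are $u_{d,l}-u_{d,k}$. Using the discrete $L^2$ norm on $\Omega_k$ (only boundary-adjacent nodes contribute), we get
\[
\|w_k\|_{L^2(\Omega_k)}^2\lesssim d\sum_{j\le n-1}\sum_{l\in\Lambda_k^j}\|u_{d,l}-u_{d,k}\|_{L^2(\Gamma_{k,l})}^2 .
\]
The interface terms are then estimated according to whether $\Gamma_{k,l}$ is a face or a lower-dimensional set.

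\textbf{Face interfaces.} If $\Gamma_{k,l}$ is a face (always the case for $l\in\Lambda_k^{n-1}$ by Lemma~\ref{lem:sigma_lsigma_l+1}), we insert the common trace of $u$:
\[
\|u_{d,l}-u_{d,k}\|_{L^2(\Gamma_{k,l})}\le\|u_{d,l}-u\|_{L^2(\partial\Omega_l)}+\|u-u_{d,k}\|_{L^2(\partial\Omega_k)}.
\]
The multiplicative trace inequality $\|v\|^2_{L^2(\partial G)}\lesssim \|v\|_{L^2(G)}\|\nabla v\|_{L^2(G)}+H^{-1}\|v\|^2_{L^2(G)}$ and the $H^1$-stability of the local $L^2$ projection give $\|u-u_{d,k}\|^2_{L^2(\partial\Omega_k)}\lesssim d|u|^2_{H^1(\Omega_k)}$, and similarly for $l$. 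Multiplying by $d$ and $\alpha_k$, and using $\alpha_k\le\alpha_l$ for $l\in\Lambda_k^j$ with $j<n$ (property 2 of the layers), yields exactly the right side of \eqref{eq:lem of the case1}.

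\textbf{Edge and vertex interfaces.} If $\Gamma_{k,l}$ is only an edge $E$ or a vertex $V$ with $l\in\Sigma_p$, $p\le n-2$, we invoke Lemma~\ref{lem:sigma_lsigma_p p<l-1} to obtain a chain $\Omega_k,\Omega_{t_1},\dots,\Omega_{t_N},\Omega_l$ with nondecreasing coefficients, consecutive members sharing faces that contain $E$ or $V$. We telescope
\[
u_{d,l}-u_{d,k}=\sum_i (u_{d,t_{i+1}}-u_{d,t_i})
\]
at the nodes on $E$ or $V$. Each consecutive difference is bounded on the common face by the face argument above; nodal values on $E\subset F$ are controlled by the discrete norm on the strip of face nodes adjacent to $E$. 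For this we use that each consecutive difference is itself of the form $(u_{d,t}-u)-(u_{d,s}-u)$ on $F$, with each piece an $O(d)$-stable local projection error. This gives
\[
d\,\alpha_k\|u_{d,l}-u_{d,k}\|^2_{L^2(E)}\lesssim \sum_i \alpha_{t_i}d^2|u|^2_{H^1(\Omega_{t_i})},
\]
again using $\alpha_k\le\alpha_{t_i}$. The constant depends on the chain length, which is bounded in terms of $m$ and the coefficient distribution. These $t_i$ are neighbors of $k$ through $E$ or $V$, so they lie in $\bigcup_j\Lambda_k^j$.

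\textbf{Main obstacle.} The delicate step is the edge/vertex bound: controlling $\|u_{d,s}-u_{d,t}\|_{L^2(E)}$ with $d^{1/2}|u|_{H^1}$ and no logarithm. Naively this requires a trace on a one-dimensional set, which fails for $H^1$ functions. I would avoid it by estimating nodal values of the finite element function $u_{d,s}-u_{d,t}$ on $E$ via an inverse inequality on the $d$-size face patch: $d\|v\|^2_{L^2(E)}\lesssim \|v\|^2_{L^2(F\cap\text{patch})}$. This reduces everything to face traces, which the multiplicative trace inequality controls with one power of $d$.
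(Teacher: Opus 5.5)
Your overall route is the paper's. You split off $u-u_{d,k}$ and reduce $u_{d,k}-u_d$ to interface nodal values via the discrete $L^2$ norm. Face interfaces are handled by inserting the common trace of $u$ and using the trace inequality with the approximation and $H^1$-stability of the local projections. Edge and vertex interfaces are handled by telescoping along the chain of Lemma~\ref{lem:sigma_lsigma_p p<l-1} through faces containing $E$ or $V$, closing with $\alpha_k\le\alpha_{t_i}$. However, the powers of $d$ in your edge/vertex step are off, and as written that step fails.

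Edge nodes enter $\|w_k\|_{L^2(\Omega_k)}^2$ through $d^3\sum_{x_i\in\mathcal N_d(E)}v(x_i)^2\simeq d^2\|v\|^2_{L^2(E)}$, not $d\|v\|^2_{L^2(E)}$. For a vertex there is no $L^2(\Gamma_{k,l})$ norm at all, only the term $d^3v(V)^2$. Your inverse inequality gives $d\|v\|^2_{L^2(E)}\lesssim\|v\|^2_{L^2(F)}\lesssim d\,|u|^2_{H^1}$, i.e.\ $\|v\|_{L^2(E)}\lesssim|u|_{H^1}$.

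The bound $\|u_{d,s}-u_{d,t}\|_{L^2(E)}\lesssim d^{1/2}|u|_{H^1}$ that you single out as the main obstacle is neither what this yields nor true in general. Take $u$ a bump of height $a$ and width $d$ centred on $E$: then $|u|^2_{H^1}\sim a^2d$, whereas $\|v\|^2_{L^2(E)}$ is generically also of order $a^2d$, not $a^2d^2$. Consequently your displayed estimate $d\,\alpha_k\|u_{d,l}-u_{d,k}\|^2_{L^2(E)}\lesssim\sum_i\alpha_{t_i}d^2|u|^2_{H^1(\Omega_{t_i})}$ does not hold. Combining your reduction $\|w_k\|^2_{L^2(\Omega_k)}\lesssim d\sum_l\|u_{d,l}-u_{d,k}\|^2_{L^2(\Gamma_{k,l})}$ with what the inverse inequality actually provides leaves you a factor $d^{-1}$ short.

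The repair is pure bookkeeping, and it is what the paper does (``the same tools as before''):
\begin{itemize}
\item[(i)] Stay with nodal sums.
\item[(ii)] Bound the sum over $\mathcal N_d(E)$ (or the single vertex term) by the sum over $\mathcal N_d(F)$, where $F\supset E$ is a face shared by consecutive members of the chain.
\item[(iii)] Use $d^3\sum_{x_i\in\mathcal N_d(F)}v(x_i)^2\simeq d\|v\|^2_{L^2(F)}\lesssim d^2\bigl(|u|^2_{H^1(\Omega_{t_i})}+|u|^2_{H^1(\Omega_{t_{i+1}})}\bigr)$.
\end{itemize}
In your language this is $d^2\|v\|^2_{L^2(E)}\lesssim d\|v\|^2_{L^2(F)}$. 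The target is only $\|v\|_{L^2(E)}\lesssim|u|_{H^1}$, which needs no trace on a one-dimensional set and no logarithm. With that correction your argument closes and coincides with the paper's proof.
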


\begin{proof} We follow the ideas in the proof of Lemma 5.4 in  \cite{Hu2023}.
We prove the lemma by considering the cases $n = 1$, $n = 2$ and $n \geq 3$, respectively. For $\Omega_k \in \Sigma_1$, by the property of the $L^2$ projection, we have
\begin{equation*}
    \alpha_k\|u -u_d\|_{L^2(\Omega_k)}^2 \lesssim \alpha_k d^2 |u|_{H^1(\Omega_k)}^2.
\end{equation*}

For $\Omega_k \in \Sigma_2$, applying the discrete $L^2$ norm yields
\[
\alpha_k \|u_{d,k}-u_d\|_{L^2(\Omega_k)}^2
\lesssim \alpha_k d^3 \sum_l \sum_{x_i \in \mathcal{N}_d(\Gamma_{k,l})} (u_{d,k} - u_{d,l})(x_i)^2.
\]
By Lemma~\ref{lem:sigma_lsigma_l+1}, each $\Gamma_{k,l}$ consists only of faces. Following the argument of Lemma 4.6 in \cite{bramble1991some}, and using the discrete $L^2$ norm, the $\varepsilon$-inequality, and the projection property, we obtain
\[
\begin{aligned}
    d^3 \sum_l \sum_{x_i \in \mathcal{N}_d(\Gamma_{k,l})} (u_{d,k} - u_{d,l})(x_i)^2
    &\lesssim d \sum_l \|u_{d,k} - u_{d,l}\|_{L^2(\Gamma_{k,l})}^2 \\
    &\lesssim d \|u_{d,k} - u\|_{L^2(\partial\Omega_k)}^2 + d \sum_l \|u_{d,l} - u\|_{L^2(\partial\Omega_l)}^2 \\
    &\lesssim d^2 |u|_{H^1(\Omega_k)}^2 + \sum_l d^2 |u|_{H^1(\Omega_l)}^2.
\end{aligned}
\]
Combining these estimates with the projection property gives
\begin{equation*}
\begin{aligned}
\alpha_k \|u -u_d\|_{L^2(\Omega_k)}^2
&\lesssim \alpha_k \|u - u_{d,k}\|_{L^2(\Omega_k)}^2 + \alpha_k \|u_{d,k} -U_d\|_{L^2(\Omega_k)}^2 \\
&\lesssim \alpha_k d^2 |u|_{H^1(\Omega_k)}^2 + \sum_{l \in \Lambda_k^1} \alpha_l d^2 |u|_{H^1(\Omega_l)}^2,
\end{aligned}
\end{equation*}
where $\Lambda_k^1 = \{ l : \Omega_l \cap \Omega_k \neq \emptyset,\ \Omega_l \in \Sigma_1 \}$.

For $\Omega_k \in \Sigma_n$ with $n \geq 3$, we estimate $I_1 := d^3 \sum_{x_i \in \mathcal{N}_d(\Gamma_{k,l})} (u_{d,k} - u_{d,l})(x_i)^2$. When $\Omega_l \in \Sigma_{n-1}$ and $u_d = u_{d,l}$ on $\Gamma_{k,l}$, the estimate follows the same argument as in the case $\Omega_k \in \Sigma_2$. When $\Omega_l \in \Sigma_p$ for $p \leq n - 2$, the interface $\Gamma_{k,l}$ may consist of faces, isolated edges, or isolated vertices. Accordingly, we partition the index set $\Lambda_k^p$ as follows:
\[
\Lambda_k^p = \Lambda_k^{p,F} \cup \Lambda_k^{p,E} \cup \Lambda_k^{p,V} \cup \Lambda_k^{p,\text{other}}, \quad p = 1,2,\dots,n-2,
\]
where
\[
\begin{aligned}
&\Lambda_k^{p,F} = \{ l : \Omega_l \cap \Omega_k \text{ is a face on which }u_d = u_{d,l},\ \Omega_l \in \Sigma_p \}, \\
&\Lambda_k^{p,E} = \{ l : \Omega_l \cap \Omega_k \text{ is an edge on which }u_d = u_{d,l},\ \Omega_l \in \Sigma_p \}, \\
&\Lambda_k^{p,V} = \{ l : \Omega_l \cap \Omega_k \text{ is a vertex on which }u_d = u_{d,l},\ \Omega_l \in \Sigma_p \}, \\
&\Lambda_k^{p,\text{other}} = \Lambda_k^p \setminus (\Lambda_k^{p,F} \cup \Lambda_k^{p,E} \cup \Lambda_k^{p,V}).
\end{aligned}
\]

For $l \in \Lambda_k^{p,F}$, the estimate follows as before. For $l \in \Lambda_k^{p,E}$, by Lemma~\ref{lem:sigma_lsigma_p p<l-1}, there exists a sequence $\Omega_{t_1}, \dots, \Omega_{t_{N_{k,l}}}$ such that
$\alpha_k \leq \alpha_{t_1} \leq \dots \leq \alpha_{t_{N_{k,l}}} \leq \alpha_l$,
and each pair intersects on a face containing $\Gamma_{k,l}$. Hence,
\begin{equation}
\label{eq:gamma_kl is a edge}
\begin{aligned}
 I_1&\lesssim d^3 \sum_{x_i \in \mathcal{N}_d(\Gamma_{k,l})} (u_{d,k} - u_{h,t_1})(x_i)^2 +  d^3 \sum_{x_i \in \mathcal{N}_d(\Gamma_{k,l})} (u_{d,t_{N_{k,l}}} - u_{d,l})(x_i)^2\\
&+ d^3 \sum_{x_i \in \mathcal{N}_d(\Gamma_{k,l})} \sum_{i=1}^{N_{k,l}-1} (u_{d,t_i} - u_{d,t_{i+1}})(x_i)^2.
\end{aligned}
\end{equation}
Using the same tools as before, we derive
\begin{equation*}
\label{eq:gamma_kl is a egde2}
\begin{aligned}
I_1&\lesssim d \|u - u_{d,l}\|_{L^2(\partial \Omega_k)}^2 + d \sum_{i=1}^{N_{k,l}} \|u - u_{d,t_i}\|_{L^2(\partial \Omega_{t_i})}^2  + h \|u - u_{d,l}\|_{L^2(\partial \Omega_l)}^2 \\
&\lesssim d^2 |u|_{H^1(\Omega_k)}^2 + d^2 \sum_{i=1}^{N_{k,l}} |u|_{H^1(\Omega_{t_i})}^2 + d^2 |u|_{H^1(\Omega_l)}^2.
\end{aligned}
\end{equation*}
For $l \in \Lambda_k^{p,V}$, a similar argument applies. Define $\Lambda_k^{p,EV} := \Lambda_k^{p,E} \cup \Lambda_k^{p,V}$. Combining all cases and the projection properties, we obtain for $\Omega_k \in \Sigma_n$ with $n \geq 3$,
\begin{equation*}
\label{eq:合并}
\begin{aligned}
\|u -u_d\|_{L^2(\Omega_k)}^2
&\lesssim d^2 \Big(|u|_{H^1(\Omega_k)}^2 + \sum_{j=1}^{n-1} \sum_{l \in \Lambda_k^j} |u|_{H^1(\Omega_l)}^2 + \sum_{p=1}^{n-2} \sum_{l \in \Lambda_k^{p,EV}} \sum_{i=1}^{N_{k,l}} |u|_{H^1(\Omega_{t_i})}^2\Big).
\end{aligned}
\end{equation*}
Moreover, since $\alpha_k \leq \alpha_l$ for all $l \in \Lambda_k^j$, we have
\begin{equation*}
\alpha_k \|u -u_d\|_{L^2(\Omega_k)}^2
\lesssim \alpha_k d^2 |u|_{H^1(\Omega_k)}^2 + d^2 \sum_{j=1}^{n-1} \sum_{l \in \Lambda_k^j} \alpha_l |u|_{H^1(\Omega_l)}^2,
\end{equation*}
where $\Lambda_k^j = \{ l : \Omega_l \cap \Omega_k \neq \emptyset,\ \Omega_l \in \Sigma_j \}$.
Notice that the numbers of subdomains in $\Lambda_k^{p,E}$ and $\Lambda_k^{p,V}$ are finite, and each $\Omega_{t_i}$ appears at most $m$ times.
Then the hidden constant is independent of the specific values of $\{\alpha_i\}$ and $d$ (but may depend on the layer $m$).
\end{proof}

\begin{remark}
The index set $\Lambda_k^{p,\text{other}}$ does not appear explicitly in the proof, but subdomains indexed by it may still be selected as some $\Omega_{t_i}$.
\end{remark}

\renewcommand{\proofname}{Proof of Theorem~\ref{thm: quasi-monotonicity}}
\begin{proof}
By Lemma~\ref{lem:the case satisfying Quasi-monotonicity Assumption}, we obtain
\begin{equation}
\label{eq:th1}
\begin{aligned}
     \|u -u_d\|_{L^2_\alpha(\Omega)}^2
     \lesssim \sum_{n=1}^m \sum_{\Omega_k \in \Sigma_n} \alpha_k d^2 |u|_{H^1(\Omega_k)}^2
     + \sum_{n=2}^m \sum_{\Omega_k \in \Sigma_n} d^2 \sum_{j=1}^{n-1} \sum_{l \in \Lambda_k^j} \alpha_l |u|_{H^1(\Omega_l)}^2.
\end{aligned}
\end{equation}

Since for each $j = 1, 2, \dots, n-1$, the set $\Lambda_k^j$ contains only those subdomains in $\Sigma_j$ that intersect with $\Omega_k$, and any given subdomain $\Omega_l$ can appear at most once in the summation $\sum_{j=1}^{n-1} \sum_{l \in \Lambda_k^j} \alpha_l |u|_{H^1(\Omega_l)}^2$, it follows that each term $d^2 \alpha_l |u|_{H^1(\Omega_l)}^2$ on the right-hand side of \eqref{eq:th1} appears no more than $m^2$ times. Combining the above estimate with the definition of the weighted $L^2$ projection, we obtain
\[
\| (I - Q_d^\alpha) u \|_{L^2_\alpha(\Omega)}^2 \leq \|u -u_d\|_{L^2_\alpha(\Omega)}^2 \leq C(m) d^2 |u|_{H^1_\alpha(\Omega)}^2,
\]
where $C(m)$ is a constant depending only on the distribution of $\{\alpha_k\}_{k=1}^{N_0}$, but independent of the specific values of $\alpha_k$ and the mesh size $h$.
\end{proof}

We proceed to prove Theorem~\ref{thm:the case satisfying no thorny vertex on S_h(Omega) refine}, beginning with a key lemma for the auxiliary function $u_d$. Its proof closely follows that of Lemma~\ref{lem:the case satisfying Quasi-monotonicity Assumption}, with modifications only in the treatment of thorny edges.

\begin{lemma}
\label{lem:the case satisfying no thorny vertex on S_h(Omega) refine}
Let $u \in S_{h}(\Omega)$, and assume that the decomposition \eqref{eq:domain decomposition} contains no thorny vertices. Let $u_d$ be defined as in the paragraph following \eqref{eq:lambda_k^j}. Then for any $\Omega_k \in \Sigma_n$ with $n = 2, \dots, m$, we have
\begin{equation*}
\alpha_k\|u -u_d\|_{L^2(\Omega_k)}^2
\lesssim d^2\log(H/d)\log(H/h) \big( \alpha_k |u|_{H^1(\Omega_k)}^2 + \sum_{j=1}^{n-1} \sum_{l \in \Lambda_k^j} \alpha_l |u|_{H^1(\Omega_l)}^2 \big),
\end{equation*}
where the hidden constant depends only on the distribution of $\{\alpha_i\}_{i=1}^{N_0}$ (may depend on the layer number $m$), but is independent of the variants of the coefficients, the mesh sizes $d$ and $h$.
\end{lemma}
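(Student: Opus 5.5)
We follow the proof of Lemma~\ref{lem:the case satisfying Quasi-monotonicity Assumption} step by step. First split
\[
\|u-u_d\|_{L^2(\Omega_k)}\le \|u-u_{d,k}\|_{L^2(\Omega_k)}+\|u_{d,k}-u_d\|_{L^2(\Omega_k)} .
\]
The first term is bounded by $d|u|_{H^1(\Omega_k)}$ by the approximation property of the local $L^2$ projection. The second term is a finite element function in $S_d(\Omega_k)$ that vanishes at all nodes except those on the interfaces $\Gamma_{k,l}$ where $u_d$ was reassigned. So by the discrete $L^2$ norm it is bounded by $d^3\sum_l\sum_{x_i\in\mathcal N_d(\Gamma_{k,l})}(u_{d,k}-u_{d,l})(x_i)^2$.

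Interfaces that are faces are handled exactly as before. This covers $\Omega_l\in\Sigma_{n-1}$ and $l\in\Lambda_k^{p,F}$, and for them we get the sharper bound $d^2(|u|^2_{H^1(\Omega_k)}+|u|^2_{H^1(\Omega_l)})$. Since there is no thorny vertex, an isolated vertex interface $\Gamma_{k,l}=V$ always admits the monotone chain of face-neighbours from Lemma~\ref{lem:sigma_lsigma_p p<l-1}; the vertex argument there uses only the absence of thorny vertices. These terms are therefore treated as in the quasi-monotone proof.

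The genuinely new case is an edge interface $\Gamma_{k,l}=E$ with $E$ a thorny edge, where no face chain exists. We take $w:=u_{d,k}-u_{d,l}$ restricted to the edge nodes. Then
\[
d^3\sum_{x_i\in\mathcal N_d(E)}w(x_i)^2\simeq d^2\|w\|^2_{L^2(E)}\le 2d^2\big(\|u_{d,k}-u\|^2_{L^2(E)}+\|u-u_{d,l}\|^2_{L^2(E)}\big).
\]
Each edge term is estimated by the edge lemma, Lemma~\ref{lem:edge lemma}. Applied with mesh size $d$ to $u_{d,k}\in\tilde S_d(\partial\Omega_k)$, and with mesh size $h$ to the trace of $u\in S_h$, it gives for example
\[
\|u-u_{d,k}\|^2_{L^2(E)}\lesssim \log(H/h)\,\|u-u_{d,k}\|^2_{H^{1/2}(\partial\Omega_k)}
\]
(with $\log(H/d)\le\log(H/h)$ for the coarse part). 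The constant $u-c$ must be removed first: subtract the mean $c_k$ of $u$ on $\Omega_k$, noting $u_{d,k}-c_k$ is the projection of $u-c_k$. The trace theorem and $H^1$-stability of the $L^2$ projection on quasi-uniform meshes then give
\[
\|u-u_{d,k}\|^2_{H^{1/2}(\partial\Omega_k)}\lesssim |u|^2_{H^1(\Omega_k)}+H^{-2}\|u-u_{d,k}\|^2_{L^2(\Omega_k)}\lesssim |u|^2_{H^1(\Omega_k)} .
\]
This alone yields only $d^2\log(H/h)|u|^2_{H^1}$ times a missing factor $d^{-1}$. So I would instead work at the scale of a strip of width $d$ around $E$, or equivalently use the sharper form
\[
\|v\|^2_{L^2(E)}\lesssim \log(H/h)\big(|v|^2_{H^{1/2}}+d^{-1}\|v\|^2_{L^2(\partial\Omega_k)}\big)
\]
obtained by scaling the edge lemma on patches of size $d$. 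Combined with $\|u-u_{d,k}\|^2_{L^2(\partial\Omega_k)}\lesssim d|u|^2_{H^1}$ and $|u-u_{d,k}|^2_{H^{1/2}}\lesssim\log(H/d)|u|^2_{H^1}$ (the coarse edge-lemma factor), this should give the bound
\[
d^2\log(H/d)\log(H/h)\big(|u|^2_{H^1(\Omega_k)}+|u|^2_{H^1(\Omega_l)}\big).
\]
Producing exactly this double-logarithm product while keeping only seminorms is the step I expect to be the main obstacle.

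Finally, since $\alpha_k\le\alpha_l$ for every $l\in\Lambda_k^j$ with $j<n$, we multiply by $\alpha_k$ and replace $\alpha_k$ by $\alpha_l$ in each neighbour term. We then sum over the finitely many interfaces; each neighbour appears a bounded number of times, depending only on $m$. This gives the stated estimate.
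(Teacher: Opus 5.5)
Your first chain for an edge interface already proves the estimate. The ``missing factor $d^{-1}$'' that made you abandon it is not there. The nodal sum supplies the $d^2$ by itself, since $d^3\sum_{x_i\in\mathcal N_d(E)}w(x_i)^2\simeq d^2\|w\|^2_{L^2(E)}$. Lemma~\ref{lem:edge lemma} is scale-consistent in three dimensions: $\|v\|^2_{L^2(E)}$ and $|v|^2_{H^1(\Omega_k)}$ both scale like a length. So the bound $d^2\log(H/h)|u|^2_{H^1}$ you reached is exactly of the required form.

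Concretely, apply the edge lemma at mesh size $h$ to the difference $v=u-u_{d,k}$. This function lies in $\tilde S_h(\partial\Omega_k)$ because $\mathcal T_h$ refines $\mathcal T_d$; do not apply the lemma to $u$ and $u_{d,k}$ separately, as your parenthetical suggests. The scaled trace theorem and the $H^1$-stability and approximation property of the local $L^2$ projection then give $\|u-u_{d,k}\|^2_{L^2(E)}\lesssim\log(H/h)|u|^2_{H^1(\Omega_k)}$. Since $u$ is continuous across $E$, the same holds for $l$, whence
\[
d^3\sum_{x_i\in\mathcal N_d(E)}(u_{d,k}-u_{d,l})(x_i)^2\lesssim d^2\log(H/h)\big(|u|^2_{H^1(\Omega_k)}+|u|^2_{H^1(\Omega_l)}\big).
\]
This is even stronger than needed, since $\log(H/d)\gtrsim 1$.

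As submitted, however, the proposal does not close. You replace this bound by a patch-scaled edge inequality that you do not prove. You also attach an unnecessary $\log(H/d)$ to $|u-u_{d,k}|_{H^{1/2}(\partial\Omega_k)}$; the trace theorem bounds that seminorm by $|u|_{H^1(\Omega_k)}$ with no logarithm. The decisive step is then left flagged as an obstacle.

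The paper takes a different route. It inserts the edge average $\gamma_{\Gamma_{k,l}}(u)$ and applies the edge lemma at mesh size $d$ to $u_{d,k}-\gamma_{\Gamma_{k,l}}(u)$, which gives $\log(H/d)$. It then controls the constant $\gamma_{\Gamma_{k,l}}(u-\gamma_{\partial\Omega_k}(u))$ by the edge lemma at mesh size $h$, which gives $\log(H/h)$. The product of logarithms in the statement comes from nesting these two steps. Your direct fine-mesh application avoids the nesting and yields a single $\log(H/h)$.

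A smaller point concerns vertex interfaces. You reduce them to face chains via Lemma~\ref{lem:sigma_lsigma_p p<l-1}, but that lemma is proved under quasi-monotonicity. With thorny edges present, the absence of thorny vertices only gives a monotone chain whose consecutive members meet in a face \emph{or an edge} containing $V$. Likewise, a neighbour in $\Sigma_{n-1}$ may share only an edge with $\Omega_k$. The paper's proof explicitly allows such edge links. Your edge argument handles them: bound $d^3(u_{d,t_i}-u_{d,t_{i+1}})(V)^2$ by the nodal sum over the shared edge. The chain should be stated that way rather than as a chain of faces.
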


\renewcommand{\proofname}{Proof}
\begin{proof}
We consider three cases: $n = 1$, $n = 2$ and $n \geq 3$. The case $\Omega_k \in \Sigma_1$ follows directly from the $L^2$ projection property, as in Lemma~\ref{lem:the case satisfying Quasi-monotonicity Assumption}. We also decompose each index set $\Lambda_k^p$ into four parts: $\Lambda_k^{p,F}$, $\Lambda_k^{p,E}$, $\Lambda_k^{p,V}$ and $\Lambda_k^{p,\text{other}}$.

For each $\Omega_k \in \Sigma_2$, the sets $\Lambda_k^{p,V}$ and $\Lambda_k^{p,\text{other}}$ are empty. Applying the discrete $L^2$ norm, we expand
\[
\alpha_k \|u -u_d\|_{L^2(\Omega_k)}^2
\lesssim \alpha_k \|u - u_{d,k}\|_{L^2(\Omega_k)}^2
+ \alpha_k d^3 \sum_l \sum_{x_i \in \mathcal{N}_d(\Gamma_{k,l})} (u_{d,k} - u_{d,l})(x_i)^2.
\]
We next estimate the second term. When $\Gamma_{k,l}$ is a face, the bound follows directly from Lemma~\ref{lem:the case satisfying Quasi-monotonicity Assumption}. We now consider the case with $\Gamma_{k,l}$ being an edge. The discrete $L^2$ norm gives
\begin{equation}
\label{eq:lem s_h 1}
\begin{aligned}
I_1 &:= d^3 \sum_{l} \sum_{x_i \in \mathcal{N}_d(\Gamma_{k,l})} (u_{d,k} - u_{d,l})(x_i)^2
\lesssim d^2 \sum_{l \in \Lambda_k^{1,E}} \|u_{d,k} - u_{d,l}\|_{L^2(\Gamma_{k,l})}^2 \\
&\lesssim d^2 \sum_{l \in \Lambda_k^{1,E}}
\big( \|u_{d,k} - \ga_{\Gamma_{k,l}}(u)\|_{L^2(\Gamma_{k,l})}^2
+ \|u_{d,l} - \ga_{\Gamma_{k,l}}(u)\|_{L^2(\Gamma_{k,l})}^2 \big),
\end{aligned}
\end{equation}
where $\ga_{\Gamma_{k,l}}(u)$ denotes the average of $u$ over $\Gamma_{k,l}$. By the edge Lemma~\ref{lem:edge lemma} (replacing $h$ by $d$), we further obtain
\[
I_1 \lesssim d^2 \log (H/d) \sum_{l \in \Lambda_k^{1,E}}
\big( \|u_{d,k} - \ga_{\Gamma_{k,l}}(u)\|_{H^1(\Omega_k)}^2
+ \|u_{d,l} - \ga_{\Gamma_{k,l}}(u)\|_{H^1(\Omega_l)}^2 \big).
\]

We now estimate $\|u_{d,k} - \ga_{\Gamma_{k,l}}(u)\|_{H^1(\Omega_k)}^2$. Let $r_{\partial \Omega_k}(u)$ be the average of $u$ over $\partial \Omega_k$. Then,
\begin{equation}
\label{eq:lem s_h 2}
\|u_{d,k} - \ga_{\Gamma_{k,l}}(u)\|_{H^1(\Omega_k)}^2
\lesssim \|u_{d,k} -\ga_{\partial \Omega_k}(u)\|_{H^1(\Omega_k)}^2
+ \|\ga_{\Gamma_{k,l}}(u -\ga_{\partial \Omega_k}(u))\|_{H^1(\Omega_k)}^2.
\end{equation}

For the first term on the right-hand side of \eqref{eq:lem s_h 2}, by the $L^2$ projection property and the Poincaré inequality, we have
\[
\|u_{d,k} -\ga_{\partial \Omega_k}(u)\|_{H^1(\Omega_k)}^2
\lesssim \|u - u_{d,k}\|_{H^1(\Omega_k)}^2
+ \|u -\ga_{\partial \Omega_k}(u)\|_{H^1(\Omega_k)}^2
\lesssim |u|_{H^1(\Omega_k)}^2.
\]

For the second term, since $\ga_{\Gamma_{k,l}}(u -\ga_{\partial \Omega_k}(u))$ is constant, its $H^1$-norm reduces to its $L^2$-norm. Noting that both $\Omega_k$ and $\Gamma_{k,l}$ are of size $O(H)$, we apply the Cauchy-Schwarz inequality, the edge Lemma~\ref{lem:edge lemma} and the Poincaré inequality to obtain
\[
\|\ga_{\Gamma_{k,l}}(u -\ga_{\partial \Omega_k}(u))\|_{H^1(\Omega_k)}^2
\lesssim \|u -\ga_{\partial \Omega_k}(u)\|_{L^2(\Gamma_{k,l})}^2
\lesssim \log(H/h)\, |u|_{H^1(\Omega_k)}^2.
\]
Thus,
\[
\|u_{d,k} - \ga_{\Gamma_{k,l}}(u)\|_{H^1(\Omega_k)}^2
\lesssim \log(H/h)\, |u|_{H^1(\Omega_k)}^2,
\]
and a similar estimate holds for $\|u_{d,l} - \ga_{\Gamma_{k,l}}(u)\|_{H^1(\Omega_l)}^2$. Combining all the above, we conclude that for $\Omega_k \in \Sigma_2$,
\[
\alpha_k \|u -u_d\|_{L^2(\Omega_k)}^2
\lesssim d^2 \log(H/d)\log(H/h)\big( \alpha_k |u|_{H^1(\Omega_k)}^2
+ \sum_{l \in \Lambda_k^1} \alpha_l |u|_{H^1(\Omega_l)}^2 \big).
\]

For each $\Omega_k \in \Sigma_n$ with $n \geq 3$, the analysis follows that of Lemma~\ref{lem:the case satisfying Quasi-monotonicity Assumption}. For $l \in \Lambda_k^{p,F}$, the estimate follows from the proof of Lemma~\ref{lem:the case satisfying Quasi-monotonicity Assumption}; for $l \in \Lambda_k^{p,E}$, the argument for the case $\Omega_k \in \Sigma_2$ applies. For $l \in \Lambda_k^{p,V}$, let $\textbf{x} := \Gamma_{k,l}$. By Lemma~\ref{lem:sigma_lsigma_p p<l-1}, there exist subdomains $\Omega_{t_1}, \dots, \Omega_{t_{N_{k,l}}}$ such that
$\alpha_k \leq \alpha_{t_1} \leq \dots \leq \alpha_{t_{N_{k,l}}} \leq \alpha_l$,
with each pair intersecting on a face or edge containing $\Gamma_{k,l}$. If the intersection is a face, we apply the same argument as in Lemma~\ref{lem:the case satisfying Quasi-monotonicity Assumption}; if it is an edge, we apply the argument used in the $\Sigma_2$ case. In either situation, we obtain
\begin{equation*}
d^3 (u_{d,k} - u_{d,l})(x)^2
\lesssim d^2 \log(H/d)\log(H/h)\big( |u|_{H^1(\Omega_k)}^2 + \sum_{i=1}^{N_{k,l}} |u|_{H^1(\Omega_{t_i})}^2 + |u|_{H^1(\Omega_l)}^2 \big).
\end{equation*}

Combining all three cases and using the fact that $\alpha_k \leq \alpha_l$ for all $l \in \Lambda_k^j$, $j = 1, \dots, n-1$, we obtain the desired estimate for $\Omega_k \in \Sigma_n$ with $n =2,\dots,m$:
\begin{equation*}
\alpha_k \|u -u_d\|_{L^2(\Omega_k)}^2
\lesssim d^2 \log(H/d)\log(H/h)\big( \alpha_k |u|_{H^1(\Omega_k)}^2
+ \sum_{j=1}^{n-1} \sum_{l \in \Lambda_k^j} \alpha_l |u|_{H^1(\Omega_l)}^2 \big),
\end{equation*}
where the hidden constant depends only on the distribution of $\{\alpha_i\}$, but is independent of the variants of the coefficients and the mesh size $h$.
\end{proof}

\renewcommand{\proofname}{Proof of Theorem~\ref{thm:the case satisfying no thorny vertex on S_h(Omega) refine}}
\begin{proof}
The proof follows that of Theorem~\ref{thm: quasi-monotonicity}, with Lemma~\ref{lem:the case satisfying no thorny vertex on S_h(Omega) refine} replacing Lemma~\ref{lem:the case satisfying Quasi-monotonicity Assumption}.
\end{proof}

\subsection{Proof of the general case}
This section is devoted to proofs of Theorem~\ref{thm:thorny vertex and edge} and Theorem~\ref{thm:thorny vertex and edge refine}. We first introduce auxiliary subdomains related to thorny vertices and thorny edges, following Section~5.2.1 of \cite{Hu2023}. These subdomains partition $\Omega$ into two parts: one containing all thorny features, and the other satisfying the quasi-monotonicity condition. The former is treated separately, while the latter is estimated using Theorem~\ref{thm: quasi-monotonicity}.

For each thorny edge $E_j$, let $\mathscr{S}_{E_j}^* = \{\Omega_{j,t}\}_{t=1}^{m_j}$ as defined at the beginning of Subsection~\ref{sec:main results for error estimates}. For each $\Omega_{j,t}$, define an auxiliary subdomain $D_{j,t} \subset \Omega_{j,t}$ containing $E_j$, satisfying:
1) $D_{j,t}$ is the union of some ${\mathcal T}_d$-elements in $\Omega_{j,t}$;
2) $D_{j,t}$ is a Lipschitz domain of size $O(H)$, with Lipschitz constant independent of the mesh size $d$ (and $h$);
3) for any distinct thorny features $E_i$ and $E_j$ (or $V_j$), their associated auxiliary subdomains intersect at most at a single vertex, i.e., $E_i \cap E_j$ (or $E_i \cap V_j$).

Likewise, for each thorny vertex $V_i$ with $\mathscr{S}_{V_i}^* = \{\Omega_{i,r}\}_{r=1}^{m_i}$, define $D_{i,r} \subset \Omega_{i,r}$ containing $V_i$, satisfying conditions 1) and 2). Moreover, the auxiliary subdomains corresponding to different thorny vertices  are disjoint.

\begin{figure}[!htbp]
    \centering
    \includegraphics[width=0.8\textwidth]{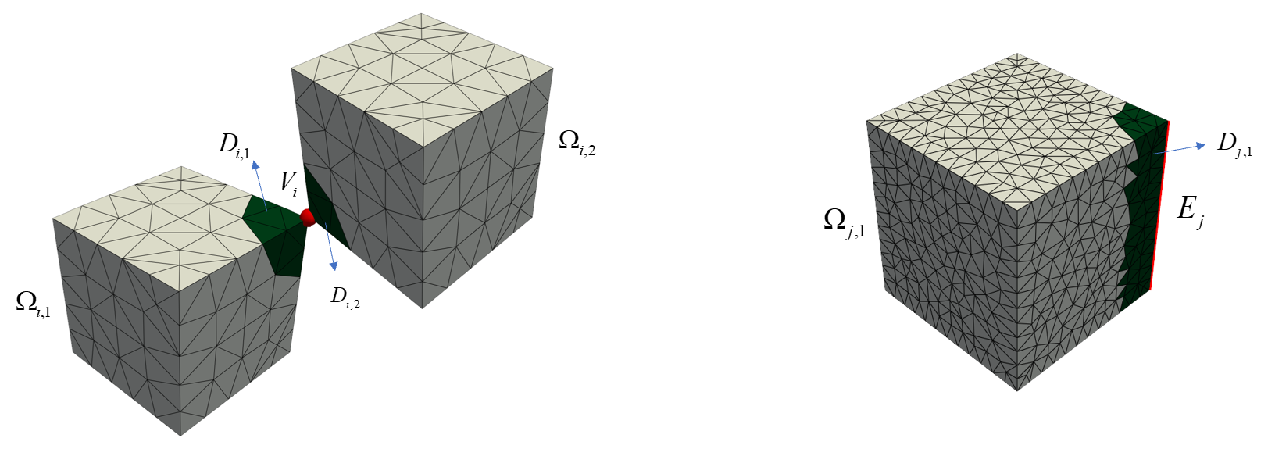}
    \caption{Illustration of auxiliary subdomains. Left: auxiliary subdomains \( D_{i,1} \) and \( D_{i,2} \) associated with a thorny vertex \( V_i \); Right: auxiliary subdomain \( D_{j,1} \) associated with a thorny edge \( E_j \).}
    \label{fig:axu subdomain}
\end{figure}

Following Section~5.2.1 of \cite{Hu2023}, we define several sets associated with the auxiliary subdomains:
\[
D_{V_i}^* = \bigcup_{r=1}^{m_i} D_{i,r}, \quad \Omega_{V_i}^* = \bigcup_{r=1}^{m_i} \Omega_{i,r}, \quad
D_{E_j}^* = \bigcup_{t=1}^{n_j} D_{j,t}, \quad \Omega_{E_j}^* = \bigcup_{t=1}^{n_j} \Omega_{j,t}.
\]
\[
\Omega_k^\partial = \Omega_k \setminus \bigcup_{D_{i,r} \subset \Omega_k} D_{i,r}~(\Omega_k \in \mathscr{S}^*), \qquad D^\partial = \bigcup_{\Omega_k \in \mathscr{S}^*} \Omega_k^\partial,
\]
where each $\Omega_k^\partial$ is a Lipschitz domain with Lipschitz constant independent of $d$ (and $h$). Define
\begin{equation}
\label{eq:Omega^c}
\Omega^c = \Big( \bigcup_{V_i \in \mathcal{V}^*} \bigcup_{\Omega_k \in \mathscr{S}_{V_i}^c} \Omega_k \Big) \cup \Big( \bigcup_{E_j \in \mathcal{E}^*} \bigcup_{\Omega_k \in \mathscr{S}_{E_j}^c} \Omega_k \Big),
\end{equation}
\begin{equation}
\label{eq:D^*}
D^* = \Big( \bigcup_{V_i \in \mathcal{V}^*} D_{V_i}^* \Big) \cup \Big( \bigcup_{E_j \in \mathcal{E}^*} D_{E_j}^* \Big),\quad \Omega_*^c = \bigcup_{\Omega_k \in \mathscr{S}_*^c} \Omega_k, \quad \Omega^\partial = D^\partial \cup \Omega_*^c.
\end{equation}
Consequently, the entire domain $\Omega$ is decomposed as
\begin{equation}
\label{eq:Omega=D^*cupOmega^partial}
\Omega = D^* \cup D^\partial \cup \Omega_*^c = D^* \cup \Omega^\partial.
\end{equation}

We now derive estimates on the region $D^*$ containing all thorny vertices and thorny edges. To this end, we define an auxiliary function $u_d$ on $D^*$ and state a key lemma. Let $u_{d,ir}$ be the $L^2$ projection of $u$ onto $\Omega_{i,r}$, and $P_F u$ the $L^2$ projection onto a face $F$. For each thorny vertex $V_i$ (or edge $E_i$), define $u_d$ on $D_{i,r} \subset \Omega_{i,r}$ by
\begin{equation}
\label{eq:U_d D^*}
u_d|_{D_{i,r}} =
\begin{cases}
P_F u, & \text{at interior nodes of faces } F \subset \partial D_{i,r}, \\
0, & \text{at all nodes on edges } E \subset \partial D_{i,r}, \\
u_{d,ir}, & \text{at all other nodes}.
\end{cases}
\end{equation}

The proof of the following lemma is similar in the spirit to Lemma 4.6 in \cite{bramble1991some}, but differs in that the subdomains here are general Lipschitz domains, and the edges and faces may be irregular.
Thus, the edge lemma established in Section~\ref{sec:Edge and Face Lemmas on Lipschitz Domains} needs to be used.

\begin{lemma}
\label{lem:estimate on D^*} Let $\mathscr{S}^*$, $D^*$ and $u_d$ be defined as in \eqref{eq:s*}, \eqref{eq:D^*} and \eqref{eq:U_d D^*}, respectively. Then, for any $u \in H_0^1(\Omega)$, we have
\begin{equation*}
\| u -u_d \|^2_{L^2_\alpha(D^*)} \leq C d^2\log(H/d) \sum_{\Omega_i \in \mathscr{S}^*} \alpha_i \| u \|^2_{H^1(\Omega_i)},
\end{equation*}
where $C$ is a constant independent of $\{\alpha_k\}$ and the mesh size $d$.
\end{lemma}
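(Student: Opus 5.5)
The estimate is local to each auxiliary subdomain, so I would prove it on one $D_{i,r}\subset\Omega_{i,r}$ at a time and then sum. There are finitely many thorny features, each $D_{i,r}$ lies in a single $\Omega_{i,r}\in\mathscr{S}^*$, and the weight on $D_{i,r}$ is the constant $\alpha_{ir}$. It therefore suffices to show
\[
\|u-u_d\|^2_{L^2(D_{i,r})}\lesssim d^2\log(H/d)\,\|u\|^2_{H^1(\Omega_{i,r})}
\]
with a constant independent of $d$, then multiply by $\alpha_{ir}$ and sum over the finitely many $(i,r)$, each $\Omega_{i,r}$ being counted a bounded number of times.

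On $D_{i,r}$ I would split
\[
u-u_d=(u-u_{d,ir})+(u_{d,ir}-u_d).
\]
The first term gives $\|u-u_{d,ir}\|_{L^2(D_{i,r})}\lesssim d|u|_{H^1(\Omega_{i,r})}$ by the approximation property of the local $L^2$ projection. The second term $w:=u_{d,ir}-u_d$ is a finite element function vanishing at every node except those on $\partial D_{i,r}$. By the discrete $L^2$ norm equivalence,
\[
\|w\|^2_{L^2(D_{i,r})}\lesssim d^3\sum_{x\in\mathcal N_d(\partial D_{i,r})}w(x)^2\lesssim d\,\|w\|^2_{L^2(\partial D_{i,r})}.
\]
I would treat the boundary nodes in two groups.

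At interior face nodes, $w=u_{d,ir}-P_Fu$. The stability of $P_F$ in $L^2(F)$ gives
\[
\|u_{d,ir}-P_Fu\|_{L^2(F)}\le\|u_{d,ir}-u\|_{L^2(F)}+\|u-P_Fu\|_{L^2(F)}\lesssim\|u_{d,ir}-u\|_{L^2(F)}+d^{1/2}\|u\|_{H^{1/2}(F)}\text{-type terms}.
\]
Combined with the trace estimate $d\|u-u_{d,ir}\|^2_{L^2(\partial\Omega_{i,r})}\lesssim d^2|u|^2_{H^1}$, as in Lemma~\ref{lem:the case satisfying Quasi-monotonicity Assumption}, this produces a contribution $O(d^2)\|u\|^2_{H^1}$. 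The one delicate point is to make sure $P_Fu$ onto a face with zero edge values still approximates $u$ to order $d^{1/2}$ in $L^2(F)$ modulo a boundary-layer piece. I expect that piece to be absorbed by the edge estimate below.

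At edge nodes, $u_d=0$ and $w=u_{d,ir}$. This is the main term and the source of both the logarithm and the full norm, since subtracting a constant is impossible there. The discrete norm gives
\[
d^3\sum_{x\in\mathcal N_d(E)}u_{d,ir}(x)^2\lesssim d^2\|u_{d,ir}\|^2_{L^2(E)}.
\]
I would then apply the edge Lemma~\ref{lem:edge lemma} with $h$ replaced by $d$ and $G=\Omega_{i,r}$ (a Lipschitz polyhedron with Lipschitz edges):
\[
\|u_{d,ir}\|^2_{L^2(E)}\lesssim\log(H/d)\,\|u_{d,ir}\|^2_{H^{1/2}(\partial\Omega_{i,r})}\lesssim\log(H/d)\,\|u_{d,ir}\|^2_{H^1(\Omega_{i,r})}\lesssim\log(H/d)\,\|u\|^2_{H^1(\Omega_{i,r})},
\]
using the trace theorem and the $H^1$-stability of the $L^2$ projection on quasi-uniform meshes. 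Here the full norm is unavoidable because $u_{d,ir}$ itself, not a difference of projections, appears. The same bound handles the face boundary-layer terms from the previous step.

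Summing the two pieces gives the local bound, and weighting and summing gives the lemma. I expect the main obstacle to be the edge/face bookkeeping: verifying that the edges of $\partial D_{i,r}$ are Lipschitz edges to which Lemma~\ref{lem:edge lemma} applies, and that the property that distinct auxiliary subdomains meet only at vertices makes the assignments in \eqref{eq:U_d D^*} consistent. For the edges, I would enforce this through the choice of $D_{i,r}$ in conditions 1)–2).
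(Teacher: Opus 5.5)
Your proposal is correct and is essentially the paper's proof: split off $u-u_{d,ir}$, reduce $u_{d,ir}-u_d$ to its nodal values on $\partial D_{i,r}$, bound the face nodes by $O(d^2)|u|^2_{H^1(\Omega_{i,r})}$ via a trace inequality, and bound the edge nodes by $d^2\|u_{d,ir}\|^2_{L^2(E)}$ together with the edge lemma (with $h$ replaced by $d$), which is exactly where both $\log(H/d)$ and the full $H^1$ norm enter. Two small simplifications: first, your worry about $P_F$ disappears if, as the paper implicitly does, $P_F$ is the $L^2(F)$-projection onto the full finite element trace space on $F$, since then $u_{d,ir}-P_Fu=P_F(u_{d,ir}-u)$ and $\|u_{d,ir}-P_Fu\|_{L^2(F)}\le\|u-u_{d,ir}\|_{L^2(F)}$ directly; second, the trace inequality and the edge lemma should be applied on $D_{i,r}$ rather than on $\Omega_{i,r}$, because $\partial D_{i,r}$ contains cut faces and edges lying inside $\Omega_{i,r}$.
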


\renewcommand{\proofname}{Proof}
\begin{proof}
Fix a subdomain $D_{i,r} \subset \Omega_{i,r}$. By the triangle inequality and the discrete $L^2$ norm, yields
\begin{equation}
\label{eq:estimate on D^*1}
\begin{aligned}
\| u -u_d \|^2_{L^2(D_{i,r})} \lesssim \| u - u_{d,ir} \|^2_{L^2(D_{i,r})} + d^3 \sum_{x_i \in \mathcal{N}_d(D_{i,r})} (u_{d,ir} -u_d)(x_i)^2.
\end{aligned}
\end{equation}
From the definition of $u_d|_{D_{i,r}}$, we further estimate
\begin{equation*}
\begin{aligned}
&d^3 \sum_{x_i \in \mathcal{N}_d(D_{i,r})} (u_{d,ir} -u_d)(x_i)^2\\
&\lesssim d^3 \sum_F \sum_{x_i \in \mathcal{N}_d(F)} (u_{d,ir} - P_F u)(x_i)^2
+ d^3 \sum_E \sum_{x_i \in \mathcal{N}_d(E)} u_{d,ir}(x_i)^2 \\
&:= I_1 + I_2.
\end{aligned}
\end{equation*}
where \( F \subset \partial D_{i,r} \) and \( E \subset \partial D_{i,r} \) denote a face or an edge of \( D_{i,r} \), respectively.

For the first term $I_1$, using the discrete $L^2$ norm, the $\varepsilon$-inequality, and properties of the $L^2$ projection, we obtain
\begin{equation}
\label{eq:estimate on D^*2}
\begin{aligned}
I_1 &\lesssim d \sum_F \|u_{d,ir} - P_F u\|^2_{L^2(F)}
\lesssim d \|u - u_{d,ir}\|^2_{L^2(\partial D_{i,r})} \\
&\lesssim \|u - u_{d,ir}\|^2_{L^2(D_{i,r})} + d^2 \|u - u_{d,ir}\|^2_{H^1(D_{i,r})}
\lesssim d^2 |u|^2_{H^1(\Omega_{i,r})}.
\end{aligned}
\end{equation}

For the second term $I_2$, using the discrete $L^2$ norm, Lemma~\ref{lem:edge lemma} (replacing $h$ by $d$) and the projection property, we get
\begin{equation}
\label{eq:estimate on D^*3}
I_2 \lesssim d^2 \sum_E \|u_{d,ir}\|^2_{L^2(E)}
\lesssim d^2 \log(H/d) \|u\|^2_{H^1(\Omega_{i,r})}.
\end{equation}

Substituting \eqref{eq:estimate on D^*2} and \eqref{eq:estimate on D^*3} into \eqref{eq:estimate on D^*1}, and applying the projection property again, we obtain
\[
\| u -u_d \|^2_{L^2(D_{i,r})} \lesssim d^2 \log(H/d)\|u\|^2_{H^1(\Omega_{i,r})}.
\]

Since the number of thorny vertices and edges is finite and depends only on the distribution of $\{\alpha_k\}_{k=1}^{N_0}$, we conclude
\begin{equation*}
\| u -u_d \|^2_{L^2_\alpha(D^*)}
\lesssim d^2 \log(H/d) \sum_{D_{i,r}} \alpha_{i,r} \|u\|^2_{H^1(\Omega_{i,r})}
\lesssim d^2\log(H/d) \sum_{\Omega_i \subset \mathscr{S}^*} \alpha_i \|u\|^2_{H^1(\Omega_i)},
\end{equation*}
\end{proof}


We next estimate the error on subdomains influenced by auxiliary subdomains.  Let \( G \subset \Omega^c \cup D^{\partial} \) be a subdomain intersecting \( D^* \) along its boundary, and define
\[
\Gamma_G := \partial G \cap \partial D^*.
\]
Assume \( D_{i,r} \cap G \neq \emptyset \). On the interface \( \Gamma_G \cap \partial D_{i,r} \), the auxiliary function \(U_d \) is defined according to its values in \( D_{i,r} \):
\begin{equation}
\label{eq:U_d G}
u_d|_{\Gamma_G \cap \partial D_{i,r}} =
\begin{cases}
P_F u, & \text{at interior nodes on faces } F \subset (\Gamma_G \cap \partial D_{i,r}), \\
0, & \text{at all nodes on edges } E \subset (\Gamma_G \cap \partial D_{i,r}).
\end{cases}
\end{equation}

The following lemma handles potential isolated edges or vertices in $\Gamma_G$.
\begin{lemma}
\label{lem:Gamma_G-estimate}
Let \( G \subset D^{\partial} \cup \Omega^c \) intersect \( D_{V_i}^* \) (or \( D_{E_i}^* \)) at an isolated edge \( E \) or vertex \( V \). Then there exists a subdomain \( D_{i,r} \subset D_{V_i}^* \) (or \( D_{E_i}^* \)) and a sequence of subdomains \( \{\Omega_{t_i}\}_{i=1}^{N_{G,ir}} \) such that \( E \subset \partial D_{i,r} \), and the coefficient values satisfy \( \alpha_G \leq \alpha_{t_1} \leq \cdots \leq \alpha_{t_{N_{G,ir}}} \leq \alpha_{i,r} \). Moreover, each of the intersections \( G \cap \Omega_{t_1} \), \( \Omega_{t_{N_{G,ir}}} \cap \Omega_{i,r} \), and \( \Omega_{t_i} \cap \Omega_{t_{i+1}} \) (for \( i = 1, \dots, N_{G,ir}-1 \)) is a face containing \( E \) or \( V \).
\end{lemma}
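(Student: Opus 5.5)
The natural route is to reduce the statement to Lemma~\ref{lem:sigma_lsigma_p p<l-1}, or rather to replay its chain-building argument, using the structure of thorny features from Proposition 4.1 and the definition of $\mathscr{S}^*$.

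\textbf{Step 1: identify the thorny feature.} Suppose $G\subset D^\partial\cup\Omega^c$ meets $D^*_{V_i}$ (or $D^*_{E_i}$) only along an isolated edge $E$ or a vertex $V$. By construction of the auxiliary subdomains, $D_{i,r}$ contains the thorny feature and meets its complement in $\Omega_{i,r}$ only along faces of $\partial D_{i,r}$ interior to $\Omega_{i,r}$. Hence an isolated-edge contact of $G$ with $D^*_{V_i}$ (resp.\ $D^*_{E_i}$) must occur through a part of $\partial D_{i,r}$ lying on $\partial\Omega_{i,r}$, passing through $V_i$ (resp.\ lying along $E_i$).

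We then pick $D_{i,r}$ with $E\subset\partial D_{i,r}$ (or $V\in\partial D_{i,r}$), and let $\Omega_G$ be the original subdomain containing $G$.

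\textbf{Step 2: compare coefficients.} Since $\Omega_{i,r}\in\mathscr{S}^*_{V_i}$ (resp.\ $\mathscr{S}^*_{E_i}$), the feature $V_i$ (or $E_i$) is isolated in the upper intersection $\mathcal{S}_{i,r}$. Therefore every subdomain touching $\Omega_{i,r}$ through a face containing the feature has a strictly smaller coefficient, by Proposition 4.1. The subdomain $G$ touches $\Omega_{i,r}$ only at $E$ or $V$, so if $\alpha_G\ge\alpha_{i,r}$ we would have $E\subset\mathcal{S}_{i,r}$ as an isolated piece. That is consistent with thornyness and does not give a contradiction. So we cannot rule out this case directly; instead we argue that contact from a higher-coefficient neighbour is exactly what is excluded by $G\notin\mathscr{S}^*$ together with the choice of $D_{i,r}$.

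Concretely, if $\alpha_G>\alpha_{i,r}$, then $E$ (or $V$) is isolated in $\mathcal{S}_{i,r}$ and $G$'s parent also carries $E$ as a feature. We then swap the roles of $G$ and $\Omega_{i,r}$ by choosing a different $D_{i,r'}$ in the same thorny family, namely the one of maximal coefficient among $\mathscr{S}^*$ members containing $E$. This yields $\alpha_G\le\alpha_{i,r}$.

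\textbf{Step 3: build the monotone chain.} Order the subdomains around $E$ (or in the star of $V$) cyclically; they cover a neighbourhood since $\Omega$ is a Lipschitz polyhedron. Starting from $\Omega_G$, we perform the same greedy construction as in the proof of Lemma~\ref{lem:sigma_lsigma_p p<l-1}. Because $\Omega_G\in\mathscr{S}^c$-type subdomains have no isolated contact with higher-coefficient neighbours at $E$, the set $\mathcal{S}_G$ near $E$ is a union of faces containing $E$. Hence there is $\Omega_{t_1}$ with $\alpha_{t_1}\ge\alpha_G$ sharing a face containing $E$. We iterate, each step strictly ascending in the multilayer index or coefficient, so the process terminates in finitely many steps (bounded by the number of subdomains around $E$). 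It stops when the current $\Omega_{t_N}$ shares a face containing $E$ with $\Omega_{i,r}$. Termination at $\Omega_{i,r}$ rather than at another maximal element is ensured by the choice in Step 2, since $\Omega_{i,r}$ is maximal among those reachable.

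\textbf{Main obstacle.} The main obstacle is Step 3's justification that each intermediate $\Omega_{t_j}$ (which may itself lie in $\mathscr{S}^*$) still admits an ascending face-neighbour. When $\Omega_{t_j}$ is itself in $\mathscr{S}^*$ the quasi-monotone face property fails. In that case we stop early and take $\Omega_{t_j}$ as the endpoint $\Omega_{i,r}$, relabelling $D_{i,r}$ accordingly. This is possible because $E$ then lies in $\partial D_{t_j}$ by property 3) of the auxiliary subdomains.
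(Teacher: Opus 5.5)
Your Step 3, together with the stopping rule in your last paragraph, is the paper's argument. From $G$, you repeatedly cross a face containing $E$ into a neighbour whose coefficient is no smaller, until you reach a subdomain that carries an auxiliary subdomain containing $E$.

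The gap is in what makes this climb start and end where it should. In Step 2, passing to the member of $\mathscr{S}^*_E$ with maximal coefficient does not give $\alpha_G\le\alpha_{i,r}$, because nothing ties $\alpha_G$ to the coefficients of the thorny family. The first move of Step 3 needs $E\subset\mathcal{S}_G$. But $G\notin\mathscr{S}^*_E$ only says that $E$ is not \emph{isolated} in $\mathcal{S}_G$, not that $E$ lies in $\mathcal{S}_G$ at all.

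This cannot be patched, because the statement fails as written. Take four subdomains $A,B,C,D$ around an interior edge $E$ in cyclic order, so $A\cap C=B\cap D=E$, with $\alpha_A=10$, $\alpha_B=\alpha_D=1$, $\alpha_C=5$. The only neighbour of $C$ with coefficient $\ge 5$ is $A$, which meets $C$ only along $E$, so $E$ is isolated in $\mathcal{S}_C$. Also $E\not\subset\mathcal{S}_A$, and $\mathcal{S}_B,\mathcal{S}_D$ contain faces through $E$. Hence $\mathscr{S}^*_E=\{C\}$, $A\in\mathscr{S}^c_E\subset\Omega^c$, and $A\cap D_C=E$. But $\alpha_A>\alpha_C$, and both face-neighbours of $A$ at $E$ have coefficient $1$. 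So neither your swap nor any admissible chain exists.

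Even when $\alpha_G\le\alpha_{i,r}$, your termination claim (``$\Omega_{i,r}$ is maximal among those reachable'') is unsupported. The climb can overshoot $\alpha_{i,r}$ and stall at a non-thorny subdomain that is maximal around $E$. Take five subdomains around $E$ in cyclic order $G,X,Y,C,Z$ with coefficients $2,10,1,5,1$. Again $\mathscr{S}^*_E=\{C\}$, and $G\cap D_C=E$ with $\alpha_G<\alpha_C$. Yet the only face-neighbour of $G$ at $E$ with larger coefficient is $X$, where $\alpha_X=10>\alpha_C$ and $E\not\subset\mathcal{S}_X$; so no chain reaches $C$.

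Your last paragraph only treats intermediate subdomains lying in $\mathscr{S}^*$. Property 3) of the auxiliary subdomains does not put $E$ on the boundary of their auxiliary subdomain. The right stopping test, as in the paper, is that $E$ is a thorny edge of, or contains a thorny vertex of, the current subdomain.

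The paper's proof is the same greedy climb. It simply asserts its first step (``since no auxiliary subdomain is constructed near $E$'') and that the iteration eventually reaches some $D_{i,r}$, so it passes over exactly these points. A correct version needs an extra hypothesis guaranteeing a face-connected path through $E$ (or $V$) with nondecreasing coefficients from $G$ to a member of $\mathscr{S}^*_E$ (or $\mathscr{S}^*_V$). The lemma as stated does not follow from the definitions.
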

\begin{proof}
Suppose \( G \subset D^\partial \cup \Omega^c \) intersects \( D_{V_i}^* \) (or \( D_{E_i}^* \)) at an isolated edge \( E \). Since no auxiliary subdomain is constructed near \( E \), there exists \( \Omega_{t_1} \) such that \( \Omega_{t_1} \cap G \supset E \) is a face and \( \alpha_{t_1} \geq \alpha_G \). If \( E \) is a thorny edge of \( \Omega_{t_1} \), or contains a thorny vertex of \( \Omega_{t_1} \), then there exists some \( D_{i,r} \subset \Omega_{t_1} \) such that \( E \subset D_{i,r} \).
 Otherwise, we iteratively construct a sequence \( \{\Omega_{t_i}\} \) such that each intersection \( \Omega_{t_i} \cap \Omega_{t_{i-1}} \supset E \) is a face and \( \alpha_{t_i} \geq \alpha_{t_{i-1}} \), continuing this process until \( E \subset D_{i,r} \subset \Omega_{i,r} \). The case of an isolated vertex \( V \) can be treated in a similar manner.
\end{proof}

Let \( u_{d,G} \) denote the \( L^2 \) projection of \( u \) onto \( G \). By the triangle inequality and the discrete \( L^2 \) norm, we have
\begin{equation}
\label{eq:G-estimate1}
\| u -u_d \|^2_{L^2(G)} \lesssim \| u - u_{d,G} \|^2_{L^2(G)} + d^3 \sum_{x_i \in \mathcal{N}_d(G)} (u_{d,G} -u_d)(x_i)^2.
\end{equation}
\begin{equation}
\label{eq:G-estimate2}
d^3\!\!\! \sum_{x_i \in \mathcal{N}_d(G)}\!\! (u_{d,G} -u_d)(x_i)^2 = d^3 \Big(\!\sum_{x_i \in \mathcal{N}_d(\Gamma_G)}\!\!\! (u_{d,G} -u_d)(x_i)^2 +\!\!\!\sum_{x_i \in \mathcal{N}_d(G \backslash \Gamma_G)}\!\!\!(u_{d,G} -u_d)(x_i)^2\Big).
\end{equation}
Since the portion \( G \setminus \Gamma_G \) satisfies the quasi-monotonicity condition, we now estimate the contribution from \( \Gamma_G \). Define the set of auxiliary subdomains intersecting \( G \) by
\[
\Lambda_G^* = \{ D_{i,r} : \partial D_{i,r} \cap \partial G \neq \emptyset \}.
\]

\begin{lemma}
\label{lem:estimate on Gamma_G}
Let $u \in H_0^1(\Omega)$ and $G \subset D^{\partial} \cup \Omega^c$ be a subdomain. Let $u_{d,G}$ be the $L^2$ projection of $u$ onto $G$, and $U_d$ be defined as in \eqref{eq:U_d G}. Then,
\begin{equation}
\label{eq:Gamma_G estimate}
d^3\sum_{x_i\in \mathcal{N}_d(\Gamma_G)}(u_{d,G}-u_d )(x_i)^2 \lesssim d^2|u|^2_{H^1(G)} + \sum_{D_{i,r}\in\Lambda_G^*} d^2\log(H/d) \|u\|^2_{H^1(\Omega_{i,r})},
\end{equation}
where the constant $C$ depends on the distribution of $\{\alpha_k\}$ but is independent of the variants of the coefficients and the mesh size $d$.
\end{lemma}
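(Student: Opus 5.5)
The plan is to split the nodal sum over $\mathcal{N}_d(\Gamma_G)$ according to the geometry of $\Gamma_G\cap\partial D_{i,r}$ for each $D_{i,r}\in\Lambda_G^*$. There are three kinds of nodes: interior nodes of faces $F\subset\Gamma_G\cap\partial D_{i,r}$, where $u_d=P_Fu$; nodes on edges $E\subset\Gamma_G\cap\partial D_{i,r}$, where $u_d=0$; and nodes where $G$ meets $D^*$ only along an isolated edge or vertex. Since the number of thorny features and auxiliary subdomains is bounded in terms of the distribution of $\{\alpha_k\}$, it is enough to bound each group separately and then sum.

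\emph{Face nodes.} On a face $F$ I would proceed as in the estimate of $I_1$ in Lemma~\ref{lem:estimate on D^*}. First, the discrete $L^2$ norm gives
\[
d^3\sum_{x_i\in\mathcal{N}_d(F)}(u_{d,G}-P_Fu)(x_i)^2\lesssim d\|u_{d,G}-P_Fu\|^2_{L^2(F)}.
\]
Then, using the $L^2(F)$-stability of $P_F$ together with $P_Fu_{d,G}=u_{d,G}$ (or simply the triangle inequality), this is bounded by $d\|u-u_{d,G}\|^2_{L^2(\partial G)}$. The trace inequality $\|w\|^2_{L^2(\partial G)}\lesssim d^{-1}\|w\|^2_{L^2(G)}+d\|w\|^2_{H^1(G)}$ combined with the approximation and $H^1$-stability of the $L^2$ projection on quasi-uniform meshes then yields $d^2|u|^2_{H^1(G)}$.

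\emph{Edge nodes.} Here $u_d=0$, so the contribution is
\[
d^3\sum_{x_i\in\mathcal{N}_d(E)}u_{d,G}(x_i)^2\lesssim d^2\|u_{d,G}\|^2_{L^2(E)}.
\]
Applying the edge Lemma~\ref{lem:edge lemma} with $h$ replaced by $d$ on $G$ gives at most $d^2\log(H/d)\|u_{d,G}\|^2_{H^{1/2}(\partial G)}\lesssim d^2\log(H/d)\|u\|^2_{H^1(G)}$. This yields a full norm on $G$, whereas the statement requires a full norm only on $\Omega_{i,r}$. To fix this, I would write $u_{d,G}=(u_{d,G}-u_{d,ir})+u_{d,ir}$ on $E$, where $E\subset\partial D_{i,r}\subset\overline{\Omega}_{i,r}$. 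The term $d^2\|u_{d,ir}\|^2_{L^2(E)}$ is bounded by the edge lemma on $\Omega_{i,r}$, exactly as $I_2$ was, giving $d^2\log(H/d)\|u\|^2_{H^1(\Omega_{i,r})}$. For the difference term, $G$ and $\Omega_{i,r}$ share a face containing $E$ (or are joined through a chain of faces), so I would estimate $u_{d,G}-u_{d,ir}$ on $E$ through its values on that common face and obtain only $d^2(|u|^2_{H^1(G)}+|u|^2_{H^1(\Omega_{i,r})})$. Proposition~4.1 guarantees that such a neighbour sharing a face with $\Omega_{i,r}$ exists.

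\emph{Isolated edges and vertices.} When $G$ touches $D_{i,r}$ only along an isolated edge or vertex, I would invoke Lemma~\ref{lem:Gamma_G-estimate} to get a chain $G,\Omega_{t_1},\dots,\Omega_{t_N},\Omega_{i,r}$ in which consecutive members share faces containing $E$ or $V$. Telescoping $u_{d,G}-u_d=(u_{d,G}-u_{d,t_1})+\cdots+(u_{d,t_N}-u_{d,ir})+(u_{d,ir}-u_d)$ gives face-type differences, each controlled as in the proof of Lemma~\ref{lem:the case satisfying Quasi-monotonicity Assumption}, while the last term vanishes or is controlled by the edge case above.

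The main obstacle is this isolated-edge/vertex case. The face-to-edge step needs an edge trace of a difference of projections controlled without a logarithm, and the semi-norms $|u|_{H^1(\Omega_{t_i})}$ of the intermediate subdomains appear on the right-hand side but not in \eqref{eq:Gamma_G estimate}. If they cannot be absorbed, I would instead accept the logarithmic factor on each link via the edge lemma. The intermediate terms would then be harmless once weighted, since $\alpha_G\le\alpha_{t_i}$ allows them to be absorbed into the final weighted sums of Theorem~\ref{thm:thorny vertex and edge}.
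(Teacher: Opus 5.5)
Your proposal follows essentially the same route as the paper. It uses the same split into face, edge and isolated edge/vertex contributions and the same decomposition $u_{d,G}-u_d=(u_{d,G}-u_{d,ir})+(u_{d,ir}-u_d)$: the chain of Lemma~\ref{lem:Gamma_G-estimate} turns the first part into face differences, and the edge lemma bounds the second (at a vertex the paper uses the discrete Sobolev embedding instead, which avoids the logarithm). The intermediate semi-norms $|u|_{H^1(\Omega_{t_i})}$ you flag are exactly the terms the paper's remark says are dropped and absorbed later, via $\alpha_G\le\alpha_{t_i}$ in Theorem~\ref{thm:thorny vertex and edge}. Your worry about a logarithm in the face-to-edge step is unfounded, and your fallback of taking a logarithm on each link is unnecessary. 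The nodes of $E$ are among those of the closed common face $F$, so $d^3\sum_{\mathcal{N}_d(E)}(\cdot)^2\le d^3\sum_{\mathcal{N}_d(F)}(\cdot)^2\simeq d\|\cdot\|^2_{L^2(F)}$. A logarithmic factor on $|u|^2_{H^1(G)}$ would in any case be incompatible with the stated bound.
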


\begin{proof}
We analyze the cases where $\Gamma_G$ contains isolated vertices, edges, or faces. Suppose $V \in \Gamma_G$ is an isolated point. By Lemma~\ref{lem:Gamma_G-estimate}, there exists $D_{i,r}$ such that $G \cap D_{i,r} = V$. By the definition of $u_d$, we have
\[
d^3(u_{d,G} -u_d)(V)^2\lesssim d^3 (u_{d,G} - u_{d,ir})(V)^2 + d^3 u_{d,ir}(V)^2.
\]
By Lemma~\ref{lem:Gamma_G-estimate}, the first term can be converted into estimates on faces containing $V$, as in the proof of Lemma~\ref{lem:the case satisfying Quasi-monotonicity Assumption}. Hence, it can be merged with the quasi-monotonic part $G \setminus \Gamma_G$. For the second term, the embedding theorem yields
\[
d^3 u_{d,ir}(V)^2 \lesssim d^3 \|u_{d,ir}\|^2_{L^\infty(\Omega_{i,r})} \lesssim d^2 \|u_{d,ir}\|^2_{H^1(\Omega_{i,r})} \lesssim d^2 \|u\|^2_{H^1(\Omega_{i,r})}.
\]

When \( G \) intersects  \( D_{i,r} \) only on an isolated edge \( E \), the analysis proceeds similarly to the vertex case. If \( G \) intersects \( D_{i,r} \) along a face \( F \), the argument follows that of Lemma~\ref{lem:estimate on D^*}. Combining all cases and noting that the number of thorny vertices and thorny edges is finite, we conclude that
\begin{align*}
d^3\sum_{x_i\in \mathcal{N}_d(\Gamma_G)}(u_{d,G}-U_d )(x_i)^2
&\lesssim d^2|u|^2_{H^1(G)} + \sum_{D_{i,r} \in \Lambda_G^*} d^2 \log(H/d) \|u\|^2_{H^1(\Omega_{i,r})}.
\end{align*}
\end{proof}

\begin{remark}
Terms such as \( d^3(u_{d,G} - u_{d,ir})(V)^2 \) are absorbed into the estimate over \( G \setminus \Gamma_G \) and hence are not explicitly included in the right-hand side of \eqref{eq:Gamma_G estimate}.
\end{remark}

Based on the above discussion, for the general case involving thorny vertices and edges, the auxiliary function \(u_d \in S_d(\Omega) \) is constructed in the following manner:
\begin{enumerate}
  \item[1)] On \( D^* \), define \(u_d \) as in~\eqref{eq:U_d D^*};
  \item[2)] On the interfaces \( \Gamma_G \), where \( G \subset D^{\partial} \cup \Omega^c \), define \(U_d \) according to~\eqref{eq:U_d G};
  \item[3)] In the remaining region \( \Omega^\partial \setminus \bigcup_{G} \Gamma_G \), define \(U_d \) following the description after~\eqref{eq:lambda_k^j}.
\end{enumerate}

\renewcommand{\proofname}{Proof of Theorem~\ref{thm:thorny vertex and edge}}
\begin{proof}
By \eqref{eq:Omega=D^*cupOmega^partial}, we have
\begin{equation*}
\label{eq:th2}
\begin{aligned}
\|u -u_d\|_{L^2_\alpha(\Omega)}^2 &= \|u -u_d\|_{L^2_\alpha(D^*)}^2 +  \!\!\!\!\sum_{G \subset D^\partial \cup \Omega^c} \!\!\!\!\|u -u_d\|_{L^2_\alpha(G)}^2 + \!\!\!\!\!\!\sum_{\Omega_i \subset \Omega^\partial \setminus (D^\partial \cup \Omega^c)}\!\! \!\!\!\!\|u -u_d\|_{L^2_\alpha(\Omega_i)}^2, \\
&:= I_1 + I_2 + I_3.
\end{aligned}
\end{equation*}
Applying Lemma~\ref{lem:estimate on Gamma_G}, we obtain
\begin{equation*}
 d^3 \!\!\!\sum_{G \subset D^\partial \cup \Omega^c} \sum_{x_i \in \mathcal{N}_d(\Gamma_G)} (u_{d,G} -u_d)(x_i)^2 \lesssim \!\sum_G d^2 |u|_{H^1(G)}^2 + \!\!\!\sum_{\Omega_{i,r} \in \mathscr{S}^*} \! d^2\log(H/d) \|u\|_{H^1(\Omega_{i,r})}^2.
\end{equation*}
Combining \eqref{eq:G-estimate1}, \eqref{eq:G-estimate2}, and {\bf Proposition 4.1}, we obtain
\begin{equation*}
\begin{aligned}
I_2 \lesssim \sum_G d^2 |u|_{H^1_\alpha(G)}^2 + \sum_{\Omega_{i,r} \in \mathscr{S}^*} \alpha_{i,r} d^2\log(H/d) \|u\|_{H^1(\Omega_{i,r})}^2 + d^3 \sum_{x_i \in G \setminus \Gamma_G} (u_{d,G} -u_d)(x_i)^2.
\end{aligned}
\end{equation*}
Since \( G \setminus \Gamma_G \) and \( \Omega_i \subset \Omega \setminus (D^\partial \cup \Omega^c) \) satisfy the quasi-monotonicity condition, Theorem~\ref{thm: quasi-monotonicity} yields
\[
d^3 \sum_{x_i \in \mathcal{N}_d(G \setminus \Gamma_G)} (u_{d,G} -u_d)(x_i)^2 + \sum_{\Omega_i \subset \Omega^\partial \setminus (D^\partial \cup \Omega^c)} \alpha_i \|u -u_d\|^2_{L^2(\Omega_i)} \lesssim d^2 |u|_{H^1_\alpha(\Omega)}^2.
\]
Combining the estimates above with Lemma~\ref{lem:estimate on D^*}, we conclude that
\[
\|(I - Q_d^\alpha) u\|_{L^2_\alpha(\Omega)}^2  \lesssim d^2\log(H/d)\sum_{\Omega_k \in \mathscr{S}^*} \alpha_k \|u\|_{H^1(\Omega_k)}^2 + d^2 \sum_{\Omega_k \notin \mathscr{S}^*} \alpha_k |u|_{H^1(\Omega_k)}^2.
\]
\end{proof}

We proceed to prove Theorem~\ref{thm:thorny vertex and edge refine}. Since the proof closely follows that of Theorem~\ref{thm:thorny vertex and edge}, we only highlight the differences and omit repeated arguments; for details, refer to the proof of Theorem~\ref{thm:thorny vertex and edge}.

We begin by adjusting the definitions of the sets related to auxiliary subdomains, excluding those associated with thorny edges. Define
\[
\tilde{\Omega}_k^\partial = \Omega_k \setminus \bigcup_{D_{ir} \subset \Omega_k} D_{ir} ~(\Omega_k \in \mathscr{\tilde{S}}^*), \quad \tilde{D}^\partial = \bigcup_{\Omega_k \in \mathscr{\tilde{S}}^*} \tilde{\Omega}_k^\partial,\quad\tilde{\Omega}^c = \bigcup_{V_i \in \mathcal{V}^*} \bigcup_{\Omega_k \in \mathscr{S}_{V_i}^c} \Omega_k,
\]
Similarly, define the following sets:
\begin{equation}
\label{eq:D^* refine}
\tilde{D}^* =  \bigcup_{V_i \in \mathcal{V}^*} D_{V_i}^*,\quad \tilde{\Omega}_*^c = \bigcup_{\Omega_k \in \mathscr{\tilde{S}}_*^c} \Omega_k, \quad \tilde{\Omega}^\partial =\tilde{D}^\partial \cup \tilde{\Omega}_*^c.
\end{equation}
Thus the entire domain \( \Omega \) can be decomposed as
\begin{equation}
\label{eq:Omega=D^*cupOmega^partial refine}
\Omega = \tilde{D}^* \cup \tilde{D}^\partial \cup \tilde{\Omega}_*^c = \tilde{D}^* \cup \tilde{\Omega}^\partial.
\end{equation}

Let the function \(u_d \) be defined as in \eqref{eq:U_d D^*}. Then, on the revised collections of auxiliary subdomains introduced above, we obtain the following two lemmas. Their proofs are analogous to those of Lemma~\ref{lem:estimate on D^*} and Lemma~\ref{lem:estimate on Gamma_G}.

\begin{lemma}
\label{lem:estimate on D^* refine}
Let \( \mathscr{\tilde{S}}^* \), \(u_d \) and \( \tilde{D}^* \) be defined by \eqref{eq:tilde s*}, \eqref{eq:U_d D^*} and \eqref{eq:D^* refine}, respectively. For any \( u \in S_{h}(\Omega) \),
we have
\begin{equation*}
\| u -u_d \|^2_{L^2_\alpha(\tilde{D}^*)} \leq C d^2\log(H/d) \sum_{\Omega_i \in \mathscr{\tilde{S}}^*} \alpha_i \| u \|^2_{H^1(\Omega_i)},
\end{equation*}
where \( C \) is a constant independent of \( \{ \alpha_k \} \), the mesh sizes $d$ and \( h \).
\end{lemma}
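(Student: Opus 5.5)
The plan is to follow the proof of Lemma~\ref{lem:estimate on D^*} with the index set restricted to thorny vertices. Since $S_h(\Omega)\subset H_0^1(\Omega)$, every step there applies verbatim to $u\in S_h(\Omega)$. The only changes are that $\tilde{D}^*$ is the union of the auxiliary subdomains $D_{i,r}\subset\Omega_{i,r}$ with $\Omega_{i,r}\in\mathscr{S}^*_{V_i}$, and that the final sum runs over $\tilde{\mathscr{S}}^*$. No $\log(H/h)$ factor should appear, because every trace estimate is applied to the coarse function $u_{d,ir}\in S_d(\Omega_{i,r})$ rather than to $u$ itself. The edge lemma is therefore used with $h$ replaced by $d$.

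\textbf{Local splitting.} Fix $D_{i,r}$ and split by the triangle inequality and the discrete $L^2$ norm on $\mathcal{T}_d$:
\[
\|u-u_d\|^2_{L^2(D_{i,r})}\lesssim \|u-u_{d,ir}\|^2_{L^2(D_{i,r})}+d^3\sum_{x\in\mathcal{N}_d(D_{i,r})}(u_{d,ir}-u_d)(x)^2 .
\]
By the definition \eqref{eq:U_d D^*}, the nodal sum is supported only on boundary nodes of $D_{i,r}$. It splits into a face part $I_1$, over interior face nodes where $u_d=P_Fu$, and an edge part $I_2$, over edge nodes where $u_d=0$.

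\textbf{The three terms.} The first term is at most $d^2|u|^2_{H^1(\Omega_{i,r})}$ by the approximation property of the local $L^2$ projection. For $I_1$, I would pass to $d\sum_F\|u_{d,ir}-P_Fu\|^2_{L^2(F)}$. The $L^2(F)$-stability of $P_F$ and the triangle inequality bound this by $d\|u-u_{d,ir}\|^2_{L^2(\partial D_{i,r})}$. A scaled trace inequality on $\mathcal{T}_d$-elements, together with the $H^1$-stability and approximation of $u_{d,ir}$, then gives $d^2|u|^2_{H^1(\Omega_{i,r})}$. For $I_2$, write $d^3\sum_{x\in\mathcal{N}_d(E)}u_{d,ir}(x)^2\simeq d^2\|u_{d,ir}\|^2_{L^2(E)}$. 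Then apply Lemma~\ref{lem:edge lemma} with mesh size $d$ to the trace of $u_{d,ir}$ on $\partial D_{i,r}$, followed by the trace theorem and $H^1$-stability of the projection. This gives $I_2\lesssim d^2\log(H/d)\|u\|^2_{H^1(\Omega_{i,r})}$; it is the only term needing the full norm.

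\textbf{Summation.} Multiply by $\alpha_{i,r}$ and sum over the finitely many $D_{i,r}$; their number depends only on the coefficient distribution, and auxiliary subdomains of different thorny vertices are disjoint. This yields the claimed bound with $\sum_{\Omega_i\in\tilde{\mathscr{S}}^*}$.

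\textbf{Main obstacle.} The delicate step is $I_2$: Lemma~\ref{lem:edge lemma} must be applied on $D_{i,r}$, whose edges are unions of $\mathcal{T}_d$-edges, rather than on $\Omega_{i,r}$. I would rely on conditions 1) and 2) in the construction of $D_{i,r}$. These make $D_{i,r}$ a Lipschitz polyhedron of size $O(H)$ built from $\mathcal{T}_d$-elements, so its edges qualify as Lipschitz edges with mesh size $d$. The edge-lemma constant is then uniform. Note also that the $H^{1/2}(\partial D_{i,r})$ norm is bounded by $\|u_{d,ir}\|_{H^1(D_{i,r})}\le\|u_{d,ir}\|_{H^1(\Omega_{i,r})}$. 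This full norm is unavoidable, since $u_d$ vanishes on edges and cannot be compared with a constant.
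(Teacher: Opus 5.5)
Your proposal is correct and matches the paper. The paper proves this lemma only by stating that it is analogous to Lemma~\ref{lem:estimate on D^*}, which is exactly your argument: the same local splitting on each $D_{i,r}$ into the projection error, the face term $I_1\lesssim d^2|u|^2_{H^1(\Omega_{i,r})}$, and the edge term $I_2\lesssim d^2\log(H/d)\|u\|^2_{H^1(\Omega_{i,r})}$ (edge lemma with mesh size $d$ applied to $u_{d,ir}$), then summation over the auxiliary subdomains of thorny vertices only. You also correctly note that no $\log(H/h)$ factor appears because all trace estimates act on the coarse projection $u_{d,ir}$, so the bound holds for all $u\in H_0^1(\Omega)\supset S_h(\Omega)$.
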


Let \( G \subset \tilde{D}^{\partial} \cup \tilde{\Omega}^c \) be a subdomain. Define
\[
\tilde{\Lambda}_G^* = \{ D_{i,r}\subset\tilde{D}^* : \partial D_{i,r} \cap \partial G \neq \emptyset \}, \quad \tilde{\Gamma}_G = \{ \partial \tilde{D}^* \cap \partial G : G \subset \tilde{D}^{\partial} \cup \tilde{\Omega}^c \}.
\]
According to \eqref{eq:U_d G}, we define the auxiliary function \(u_d \) on \( \tilde{\Gamma}_G \), and obtain the following lemma.

\begin{lemma}
\label{lem:estimate on Gamma_G refine}
Let \( u \in H_0^1(\Omega) \), and let \( G \subset \tilde{D}^{\partial} \cup \tilde{\Omega}^c \) be a subdomain. Let \( u_{d,G} \) denote the \( L^2 \) projection of \( u \) on \( G \), and let \(u_d \) be defined by \eqref{eq:U_d G}. Then,
\begin{equation}
\label{eq:Gamma_G的估计 refine}
d^3 \sum_{x_i \in \mathcal{N}_d(\tilde{\Gamma}_G)} (u_{d,G} -u_d)(x_i)^2 \lesssim d^2\log(H/d)\log(H/h)\big( |u|^2_{H^1(G)} + \sum_{D_{i,r} \in \tilde{\Lambda}_G^*} \| u \|^2_{H^1(\Omega_{i,r})} \big),
\end{equation}
where the hidden constant \( C \) depends on the distribution of \( \{ \alpha_k \} \), but is independent of the variants of the coefficients of \( \{ \alpha_k \} \), the mesh sizes $d$ and \( h \).
\end{lemma}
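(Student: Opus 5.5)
We prove the bound by mirroring Lemma~\ref{lem:estimate on Gamma_G}, splitting the nodes of $\tilde{\Gamma}_G$ according to whether they lie on a face, an isolated edge, or an isolated vertex of $\partial G\cap\partial D_{i,r}$, $D_{i,r}\in\tilde{\Lambda}_G^*$. The one structural difference is that $\tilde{D}^*$ now contains only the auxiliary subdomains attached to thorny vertices. A thorny edge that is not incident to a thorny vertex therefore lies inside $\tilde{\Omega}^\partial$. It has no auxiliary subdomain, and its contribution must be handled with the finite element (log--log) edge argument from the proof of Lemma~\ref{lem:the case satisfying no thorny vertex on S_h(Omega) refine}, not with a full $H^1$ norm. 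This is the source of the extra factor $\log(H/h)$ in \eqref{eq:Gamma_G的估计 refine}. (The statement says $u\in H_0^1(\Omega)$, but this step needs $u\in S_h(\Omega)$, as in Lemma~\ref{lem:estimate on D^* refine}.)

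\emph{Face nodes.} On a face $F\subset\partial G\cap\partial D_{i,r}$ we have $u_d=P_Fu$ at interior nodes. The discrete $L^2$ norm, the trace inequality and the $L^2$ projection property then give, exactly as in \eqref{eq:estimate on D^*2},
\[
d^3\sum_{x_i\in\mathcal N_d(F)}(u_{d,G}-P_Fu)(x_i)^2\lesssim d\,\|u-u_{d,G}\|^2_{L^2(\partial G)}\lesssim d^2|u|^2_{H^1(G)}.
\]

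\emph{Edge nodes.} On an edge $E\subset\partial D_{i,r}$ we have $u_d=0$. We write
\[
d^3\sum_{x_i\in\mathcal N_d(E)}u_{d,G}(x_i)^2\lesssim d^2\|u_{d,G}-u_{d,ir}\|_{L^2(E)}^2+d^2\|u_{d,ir}\|_{L^2(E)}^2 .
\]
The second term is bounded by Lemma~\ref{lem:edge lemma} (with $h$ replaced by $d$) and the stability of the projection, giving $d^2\log(H/d)\|u\|^2_{H^1(\Omega_{i,r})}$. For the first term, Lemma~\ref{lem:Gamma_G-estimate} supplies a chain $\Omega_{t_1},\dots,\Omega_{t_N}$ of subdomains whose consecutive intersections are faces or edges containing $E$, with monotone coefficients. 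Face links are handled as in Lemma~\ref{lem:the case satisfying Quasi-monotonicity Assumption}. For an edge link (a thorny edge with no auxiliary subdomain) we insert the edge average $\gamma_E(u)$, apply Lemma~\ref{lem:edge lemma} on each side, and then estimate $\|u_{d,t}-\gamma_E(u)\|_{H^1(\Omega_t)}$ through $\gamma_{\partial\Omega_t}(u)$, Poincaré, and Lemma~\ref{lem:edge lemma} at mesh size $h$. This is the step that uses $u\in S_h$, and it gives $d^2\log(H/d)\log(H/h)|u|^2_{H^1(\Omega_t)}$. The seminorms of the intermediate subdomains are absorbed into the right-hand side using the monotonicity of the chain, as in the earlier proofs.

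\emph{Vertex nodes.} For an isolated vertex $V$ the argument is the same as in Lemma~\ref{lem:estimate on Gamma_G}. The chain term becomes face/edge estimates as above. The term $d^3u_{d,ir}(V)^2$ is bounded by $d^2\|u\|^2_{H^1(\Omega_{i,r})}$ via the inverse and Sobolev bound used there. To get a log-free pointwise bound cleanly, we would instead bound it by the discrete $L^2$ norm on an edge through $V$ and use Lemma~\ref{lem:edge lemma}.

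Summing over the finitely many faces, edges, vertices and chains, whose number and length depend only on the coefficient distribution, yields \eqref{eq:Gamma_G的估计 refine}. The main obstacle is the edge-link step: we must check that every edge link that actually occurs corresponds to a thorny edge not attached to $\tilde{D}^*$, so that only seminorms appear there, and that the constants do not depend on the size of the coefficients.
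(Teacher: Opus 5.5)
Your proposal is correct and follows the paper's route. The paper's proof also mirrors Lemma~\ref{lem:estimate on Gamma_G} and handles the edge terms $d^3\sum_{x_i\in\mathcal N_d(E)}(u_{d,G}-u_{d,ir})(x_i)^2$ with the edge-average, Poincar\'e and edge-lemma-at-mesh-size-$h$ argument from the proof of Lemma~\ref{lem:the case satisfying no thorny vertex on S_h(Omega) refine}, applied directly between $G$ and $\Omega_{i,r}$; that is why your detour through a chain of subdomains is not needed, and going directly avoids producing intermediate seminorms that the lemma's unweighted right-hand side cannot absorb. You are also right that this step requires $u\in S_h(\Omega)$ rather than $u\in H_0^1(\Omega)$ as written in the statement, and your aside that an edge-lemma bound at the vertex would be ``log-free'' is wrong, though harmless since the Sobolev/inverse bound you cite first already gives the log-free estimate.
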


\renewcommand{\proofname}{Proof}
\begin{proof}
The proof of this lemma is similar to that of Lemma~\ref{lem:estimate on Gamma_G}. However, to handle edge-related terms in \( \tilde{\Gamma}_G \), such as
$d^3\sum_{x_i\in \mathcal{N}_d(E)}(u_{d,G}-u_{d,ir} )(x_i)^2$, one needs to adopt the technique used in the proof of Lemma~\ref{lem:the case satisfying no thorny vertex on S_h(Omega) refine}.
\end{proof}

\begin{remark}
As in Lemma~\ref{lem:estimate on Gamma_G}, certain terms in the conclusion of Lemma~\ref{lem:estimate on Gamma_G refine} are omitted, as they can be incorporated into the estimate over \( G \setminus \tilde{\Gamma}_G \).
\end{remark}

\renewcommand{\proofname}{Proof of Theorem~\ref{thm:thorny vertex and edge refine}}
\begin{proof}
The argument follows that of Theorem~\ref{thm:thorny vertex and edge}. We construct an auxiliary function \(u_d \in S_d(\Omega) \), modifying the procedure below Lemma~\ref{lem:estimate on Gamma_G}. In Step~(1), \( D^* \) is replaced by \( \tilde{D}^* \); the remaining steps are adjusted accordingly to complete the construction of \(u_d \).
The estimate then follows directly from Lemma~\ref{lem:estimate on D^* refine}, Lemma~\ref{lem:estimate on Gamma_G refine} and Theorem~\ref{thm:the case satisfying no thorny vertex on S_h(Omega) refine}.
\end{proof}

\renewcommand{\proofname}{Proof}

\bibliographystyle{siamplain}

\end{document}